\newtheorem{theorem}{Theorem}[section]
\newtheorem{corollary}[theorem]{Corollary}
\newtheorem{conjecture}[theorem]{Conjecture}
\newtheorem{question}[theorem]{Question}
\newtheorem{proposition}[theorem]{Proposition}
\newtheorem{lemma}[theorem]{Lemma}
\theoremstyle{definition}
\newtheorem{definition}[theorem]{Definition}
\theoremstyle{definition}
\newtheorem{remark}[theorem]{Remark}
\newcommand{\Q}{\ensuremath{\mathbb{Q}}}
\newcommand\HFh{{\rm {\widehat{HF}}}}
\newcommand\HFK{{\rm {HFK}}}
\newcommand\HFKh{{\rm {\widehat{HFK}}}}
\newcommand\HFKt{{\rm {\widetilde{HFK}}}}
\newcommand\CFK{{\rm {CFK}}}
\newcommand\CFKm{{\rm {CFK^{-}}}}
\newcommand\CFKt{\widetilde{\mathrm{CFK}}}
\newcommand\alphas{\mbox{\boldmath$\alpha$}}
\newcommand\betas{\mbox{\boldmath$\beta$}}
\newcommand\ws{\mathbf w}
\newcommand\zs{\mathbf z}
\newcommand\FF{\mathbb F}
\newcommand\TT{\mathbb{T}}
\newcommand\SH{\mathcal{H}}
\newcommand\rk{\rm{rk }\,}
\def\x{\mathbf{x}}
\def\y{\mathbf{y}}
\def\z{\mathbf{z}}
\def\p{\mathbf{p}}
\def\w{\mathbf{w}}
\newcommand{\cM}{\mathcal{M}}
\newcommand{\RR}{\mathbb{R}}
\newcommand{\QQ}{\mathbb{Q}}
\newcommand{\del}{\partial}
\newcommand{\ZZ}{\mathbb{Z}}
\newcommand{\cL}{\mathcal{L}}
\newcommand{\cU}{\mathcal{U}}
\newcommand{\cT}{\mathcal{T}}
\newcommand{\cP}{\mathcal{P}}
\newcommand{\cA}{\mathcal{A}}
\newcommand{\cC}{\mathcal{C}}
\newcommand{\Kh}{{\rm {Kh }}}
\newcommand{\CKh}{{\rm {CKh }}}
\newcommand{\Khr}{\widetilde{\Kh }}
\newcommand{\XX}{\mathbb{X}}
\newcommand{\OO}{\mathbb{O}}
\newcommand{\GG}{\mathbb{G}}
\newcommand{\St}{\mathbf{S}}
\newcommand{\Cone}{{\rm {Cone}}}
\newcommand{\Ker}{{\rm {Ker}}}
\newcommand{\Image}{{\rm {Im}}}
\newcommand{\Coker}{{\rm {Coker}}}
\newcommand{\Link}{\mathbf{Link}}
\newcommand{\Diag}{\mathbf{Diag}}
\newcommand{\Spect}{\mathbf{Spect}}
\newcommand{\Vect}{\mathbf{Vect}}
\newcommand{\Rect}{{\rm {Rect}}}
\newcommand{\Pent}{{\rm {Pent}}}
\newcommand{\Tri}{{\rm {Tri}}}
\newcommand{\GC}{{\rm {GC}}}
\newcommand{\GH}{{\rm {GH}}}
\newcommand{\GCt}{\widetilde{\GC}}
\newcommand{\GHt}{\widetilde{\GH}}
\newcommand{\GHh}{\widehat{\GH}}
\newcommand{\leftbracket}{[}
\begin{document}

\title[{On Conway mutation and link homology}]{On Conway mutation and link homology}

\author[P. Lambert-Cole]{Peter Lambert-Cole}
\address{Department of Mathematics \\ Indiana University}
\email{pblamber@indiana.edu}
\urladdr{\href{https://www.pages.iu.edu/~pblamber/}{https://www.pages.iu.edu/\~{}pblamber}}

\keywords{Mutation, Heegaard Floer homology, Khovanov homology}
\subjclass[2010]{57M27; 57R58}
\maketitle


\begin{abstract}

We give a new, elementary proof that Khovanov homology with $\ZZ/2\ZZ$--coefficients is invariant under Conway mutation.  This proof also gives a strategy to prove Baldwin and Levine's conjecture that $\delta$--graded knot Floer homology is mutation--invariant.  Using the Clifford module structure on $\HFKt$ induced by basepoint maps, we carry out this strategy for mutations on a large class of tangles.  Let $L'$ be a link obtained from $L$ by mutating the tangle $T$.  Suppose some rational closure of $T$ corresponding to the mutation is the unlink on any number of components.  Then $L$ and $L'$ have isomorphic $\delta$--graded $\HFKh$ groups over $\ZZ/2\ZZ$ as well as isomorphic Khovanov homology over $\QQ$.  We apply these results to establish mutation--invariance for the infinite families of Kinoshita-Terasaka and Conway knots.  Finally, we give sufficient conditions for a general Khovanov-Floer theory to be mutation--invariant.

\end{abstract}

\section{Introduction}

A {\it Conway sphere} for a link $L \subset S^3$ is a smoothly embedded 2-sphere that intersects the link in 4 points.  This separates the link into a pair of tangles $(B^3,T_1)$ and $(B^3,T_2)$.  The center of the mapping class group of $S^2$ with 4 marked points has four elements: the identity and 3 involutions.  After identifying $S^2$ with the unit sphere in $\RR^3$ and the four marked points on the $xz$--plane, we can identify the 3 involutions with rotations by $\pi$ around the 3 coordinate axes.  A {\it mutation} of $L$ is a link obtained by changing the gluing map of $(B^3,T_1)$ and $(B^3,T_2)$ by such an involution $\tau$.

Let $\HFKh(L)$ denote the $\ZZ \oplus \ZZ$--graded knot Floer homology groups associated to the link $L$ with coefficients in $\FF_2$.  The $\delta$--graded knot Floer groups are obtained by collapsing along the diagonals $m - a = \delta$:
\[ \HFKh_{\delta}(L) \coloneqq \bigoplus_{m - a = \delta} \HFKh_m(L,a)\]

The bigraded knot Floer homology groups can distinguish mutant knots, such as the Kinoshita-Terasaka and Conway knots \cite{OS-mutation,BG-computations}.  However, explicit computations showed that for 11- and 12-crossing knots, mutation preserves the $\delta$--graded invariant \cite{BG-computations}.  This data, along with a combinatorial model for $\HFKh_{\delta}$, led Baldwin and Levine to conjecture that this phenomenon is true in general.  

\begin{conjecture}[Baldwin-Levine \cite{BL-spanning}]
\label{conj:mutation-invariance}
Let $L$ and $L'$ be a mutant pair of links.  Then there is an isomorphism
\[\HFKh_{\delta}(L) \cong \HFKh_{\delta}(L')\]
\end{conjecture}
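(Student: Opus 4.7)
The plan is to pass from link invariants to tangle invariants adapted to the Conway sphere $S$, and then exploit extra algebraic structure on $\HFKt$ coming from basepoint maps near $S$ to absorb the three mutation involutions at the $\delta$-graded level. I would begin by choosing a Heegaard diagram for $L$ in which a small neighborhood of the four points of $S \cap L$ carries two $w$-basepoints and two $z$-basepoints arranged alternately along $L$. This neighborhood cleanly decomposes $\HFKt(L)$ as a pairing $F(B^3,T_1) \otimes_A F(B^3,T_2)$ of Floer-theoretic tangle invariants over a dg algebra $A$ attached to the sphere.

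The four basepoints on $S$ induce endomorphisms of $\HFKt(L)$ that pairwise anticommute and square to zero, thus generating an action of a four-generator exterior (Clifford) algebra $\Lambda$ that commutes with the differential. This Clifford action descends to $\HFKh_\delta(L)$ and can be computed locally from each $F(B^3,T_i)$. The goal is then to realize each of the three mutations $\tau$ as an automorphism of the pair $(F(B^3,T_1), \Lambda)$ that becomes an inner automorphism after passing to the $\delta$-graded quotient, so that its effect on the pairing is trivial up to isomorphism.

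Concretely, I would carry out the following steps: (i) for each mutation axis, identify the induced permutation of the four basepoints and realize this permutation as conjugation by an explicit element $u_\tau \in \Lambda$; (ii) show that this Clifford conjugation intertwines the map induced by the tangle homeomorphism $\tau$ on $F(B^3,T_1)$ with the identity on $F(B^3,T_2)$; (iii) conclude that the pairing produces a $\delta$-graded isomorphism $\HFKh_\delta(L) \cong \HFKh_\delta(L')$. In the special case addressed in the body of the paper, where some rational closure of $T_i$ is an unlink, step (ii) reduces to an explicit calculation on a small model in which the Clifford structure is completely transparent.

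The main obstacle is step (ii) for an arbitrary tangle. A priori, the mapping class group action on the tangle invariant $F(B^3,T_i)$ is not generated by the four basepoint maps alone, so showing that mutation acts by Clifford conjugation requires an additional input. A complete proof will likely demand a bordered-style functorial tangle calculus in which the mapping class group of $(S^2, 4 \text{ pts})$ acts naturally on $F(B^3,T_i)$, together with a verification that this action agrees with conjugation by the elements $u_\tau$ identified in step (i); alternatively, one might try to reduce the general case to the unknotted-closure case of the paper via a $\delta$-graded skein exact triangle that systematically decreases the complexity of the tangle.
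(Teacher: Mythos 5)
First, be aware that the statement you were asked to prove is stated in the paper only as a \emph{conjecture}, attributed to Baldwin--Levine; the paper itself does not prove it. What the paper establishes is Theorem~\ref{thrm:HFK-mutation-invariance}, a partial result under the additional hypothesis that some rational closure of the mutated tangle corresponding to the mutation is an unlink. So there is no ``paper's own proof'' of the conjecture in full generality to compare against — only the restricted theorem and its proof via the unoriented skein triangle.

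Your proposal is honest about its central gap, and you correctly identify where the difficulty lies, but the route you sketch is genuinely different from the paper's and the gap is not a small one. You propose a bordered-style pairing $F(B^3,T_1)\otimes_A F(B^3,T_2)$ over a dg algebra attached to the Conway sphere, with the four basepoint maps near $S$ generating a Clifford action, and you hope to show that mutation acts by an inner automorphism on the $\delta$-graded level. None of this machinery is built or invoked in the paper. The paper does use a Clifford module structure ($\Omega_\nu$ acting on $\HFKt$), but the action it exploits lives on the boundary of two surgery bands in a \emph{standard-form diagram} for the mutant pair, not on a fixed Conway sphere, and the argument is organized around the $3\times 3$ grid of unoriented skein triangles for the nine resolution links $\cL_{\bullet,\circ}$. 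The decisive technical input in the paper is ``virtual surjectivity'' of the merge maps $f_0,k_0$ (Lemma~\ref{lemma:virtually-surjective}), which uses the vanishing of basepoint operators on $\HFKh$ of the unlink (Lemma~\ref{lemma:basepoints-unknot}) and the pseudo-equivariance relations of Proposition~\ref{prop:basepoint-equivariance}. This is precisely where the unlink-closure hypothesis enters, and it is also precisely what has no analogue in your sketch.

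The gap you flag in your step~(ii) is indeed fatal for the general conjecture as you have set things up. There is no reason the mapping class group action on a bordered tangle invariant should be generated by, or even conjugate into, the group generated by the four basepoint operators after collapsing to $\delta$-grading. To make this precise you would need to actually construct the pairing theorem for $\HFKt$ over an algebra on which the four involutions act as inner automorphisms, and then verify compatibility between the geometric and algebraic actions. This is a substantial project in its own right (comparable in scope to Zibrowius's tangle-invariant approach, which the paper's discussion section cites as an alternative but also incomplete route). Your fallback suggestion — a $\delta$-graded skein induction that reduces a general tangle to an unknotted-closure tangle — does not obviously terminate or preserve the relevant data: the skein triangle only gives long exact sequences, not isomorphisms, so inducting on tangle complexity requires some monotone quantity that controls all three terms simultaneously, and you have not identified one.

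If you want to continue in the direction the paper leaves open, the more tractable target is the homotopy equivalence stated in the paper's Discussion section,
\[
\Cone\bigl( f_1: \CFKt(\cL_{1,0}) \rightarrow \CFKt(\cL_{1,1})\bigr) \;\sim\; \Cone\bigl( k_1: \CFKt(\cL_{0,1}) \rightarrow \CFKt(\cL_{1,1})\bigr),
\]
which would suffice for the full conjecture; by contrast, the proof of Theorem~\ref{thrm:HFK-mutation-invariance} only equates the $\delta$-graded ranks of $f_1$ and $k_1$ using the virtual-surjectivity trick, and that trick does not work without the unlink hypothesis.
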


In this paper, we investigate this conjecture and prove it for mutations on a class of tangles.  Let $(B^3,T_0)$ denote the tangle consisting of 2 boundary-parallel arcs.  A link $L$ is a {\it rational closure} of a tangle $(B^3,T)$ if it can be decomposed in the form
\[(S^3, L) = (B^3_1,T) \cup_{\phi} (B^3_2,T_0)\]
for some homeomorphism $\phi: \del B^3_1 \rightarrow \del B^3_2$.  Rational closures are not unique: the set of rational closures of $(B^3,T_0)$ itself is the set of 2-bridge links.  Let $\cC(T)$ denote the set of rational closures of $T$.  A mutation of $T$ by the involution $\tau$ determines a subset $\cC_{\tau}(T)$ as follows.  A rational closure of $T$ is in $\cC_{\tau}(T)$ if the arcs of $T_0$ connect points of $\del T$ exchanged by the involution $\tau$.  We refer to $\cC_{\tau}(T)$ as the set of {\it rational closures corresponding to the mutation $\tau$}.

In this paper, we show that if the set $\cC_{\tau}(T)$ contains the unlink on any number of components, then many link homology theories are preserved by the mutation $\tau$.  For example, Kinoshita and Terasaka defined an infinite family of knots $KT_{r,n}$ for $r,n \in \ZZ$ with trivial Alexander polynomial, where the Kinoshita-Terasaka  knot $11n42$ is $KT_{2,1}$ \cite{KT}.  Analogously, there is an infinite family of Conway mutants $C_{r,n}$ extending $C_{2,1} = 11n34$ that are obtained from the Kinoshita-Terasaka family by mutation.  We can choose a diagram for $KT_{r,n}$ so that the numerator closure of the mutated tangle is the unknot (Figure \ref{fig:KT}).  As a second example, if $T$ is the tangle sum of two rational tangles $\left[\frac{p}{q}\right]$ and $\left[\frac{r}{s}\right]$, then the numerator closure of $T$ is the unknot or 2-component unlink if $ps + qr \in \{-1,0,1\}$ \cite{Kauffman-Lambro}.

\subsection{Khovanov Homology}

The starting point is a new proof of the following theorem.

\begin{theorem}[Bloom \cite{Bloom-Odd-Khovanov}, Wehrli \cite{Wehrli-Kh-mutation}]
\label{thrm:Kh-mutation-invariance}
Let $L,L'$ be mutant links and let $\Kh$ denote Khovanov homology with $\ZZ/2\ZZ$-coefficients.  There is an isomorphism
\begin{align*}
\Kh(L) &\cong \Kh(L')
\end{align*}
\end{theorem}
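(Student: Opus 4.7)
The plan is to split the Khovanov chain complex along the Conway sphere and make the mutation visible as a symmetry of a Clifford-type module structure that materializes only over $\FF_2$. Choose a diagram $D$ for $L$ in which the Conway sphere appears as a round circle $\gamma$ meeting $D$ transversely in four points, let $T_1,T_2$ be the resulting tangle diagrams, and write $D' = T_1 \cup \tau T_2$. Mark a basepoint $p_i$ on each of the four strands crossing $\gamma$; the mutation $\tau$ permutes these basepoints by an involution $\sigma \in S_4$.

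The first step is to record that each basepoint gives a chain endomorphism $X_i$ of $\CKh$ that squares to zero, and that over $\FF_2$ any two such endomorphisms commute. This equips $\CKh(D)$ with the structure of a dg-module over the exterior algebra $\Lambda = \FF_2[X_1,\ldots,X_4]/(X_i^2)$, and analogously makes each tangle invariant $\CKh(T_j)$ into a $\Lambda$-module via its basepoints adjacent to $\gamma$.

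The second step is to identify $\CKh(D)$ with a derived tensor product $\CKh(T_1) \otimes_\Lambda^{\mathbf L} \CKh(T_2)$. The idea is that the four saddle cobordisms supported near $\gamma$ realize the $\Lambda$-action, and one can reorganize the cube of resolutions along $\gamma$ into a bar-resolution model for this tensor product. Here characteristic $2$ is essential, since the signs that would normally obstruct identifying saddles with an exterior algebra action vanish over $\FF_2$.

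The third step is then essentially automatic: the planar rotation $\tau$ induces a tautological chain isomorphism $\CKh(T_2) \cong \CKh(\tau T_2)$ intertwining the $\Lambda$-module structures through the permutation $\sigma$. Tensoring with the identity on $\CKh(T_1)$ and using that the derived tensor product over $\Lambda$ is invariant under the $S_4$-action permuting the generators $X_i$ produces the desired chain homotopy equivalence $\Kh(L) \cong \Kh(L')$. I expect the main obstacle to be the second step, namely showing rigorously that the pairing of the two tangle complexes along $\gamma$ is modeled by $\otimes_\Lambda^{\mathbf L}$ and that all Khovanov differentials are $\Lambda$-linear; once this structural input is in place, mutation invariance follows from the evident symmetry of the tensor product.
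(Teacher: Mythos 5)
Your approach is entirely different from the paper's. The paper never decomposes the Khovanov complex along the Conway sphere and never invokes a tangle--gluing formalism; instead it places the mutant pair in a ``standard form'' diagram differing in only two extra crossings, builds a $3\times 3$ array of unoriented skein exact triangles from resolving those two crossings, and finishes with an algebraic manipulation using the surjectivity of elementary merge maps $\Kh(L_1\cup L_2)\to\Kh(L_1\# L_2)$ over $\FF_2$ (which is a consequence of basepoint independence of reduced $\Kh$) together with the Kunneth isomorphism for connected sums. This argument is elementary and self-contained in the sense that it only uses properties of the skein triangle.

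There is a genuine gap in your proposal, at the second step, and I do not believe it can be repaired in the form stated. The claim that $\CKh(D)\simeq \CKh(T_1)\otimes_{\Lambda}^{\mathbf L}\CKh(T_2)$, where $\Lambda=\FF_2[X_1,\dots,X_4]/(X_i^2)$ is generated by the four basepoint endomorphisms, is not a gluing formula for Khovanov homology. The algebra that actually controls the pairing of two Khovanov tangle invariants along a $4$-punctured sphere is Khovanov's arc algebra $H^2$ (equivalently, planar composition in Bar-Natan's cobordism category), which is a $12$-dimensional noncommutative algebra, not the $16$-dimensional commutative exterior algebra on four generators. The basepoint maps $X_i$ are local ``dotting'' operators --- Frobenius multiplications on a circle through $p_i$ --- and are not the saddle cobordisms you invoke near $\gamma$; a saddle changes the underlying link, while $X_p$ is an endomorphism of a fixed complex. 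The exterior-algebra module structure you describe exists, but it sits on top of the already-assembled complex $\CKh(D)$ and records no information about how $T_1$ and $T_2$ are joined across $\gamma$. Moreover, even with the correct gluing algebra in hand, deducing mutation invariance is not ``essentially automatic'': one needs to show that the derived tensor product is insensitive to twisting the module structure by $\sigma$ on \emph{one} factor only, which requires an actual symmetry of the arc algebra (this is the technical heart of Wehrli's argument over $\FF_2$, and also exactly where the statement fails over $\ZZ$ and $\QQ$). Your framework, as written, neither identifies the right algebra nor supplies that symmetry.
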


The tools involved are more elementary than the proofs in \cite{Bloom-Odd-Khovanov,Wehrli-Kh-mutation}.  In particular, we need only the following 3 facts\footnote{Facts (2) and (3) themselves are consequences of the fact that over $\ZZ/2\ZZ$, reduced Khovanov homology does not depend on the component containing the basepoint.}:
\begin{enumerate}
\item the unoriented skein exact triangle
\[ \rightarrow \Kh(L) \rightarrow \Kh(L_0) \rightarrow \Kh(L_1) \rightarrow \Kh(L) \rightarrow\]
\item over $\ZZ/2 \ZZ$ and for any two (unoriented) links $L_1,L_2$, the group $\Kh(L_1 \# L_2)$ is independent, up to isomorphism, of the choice of connected sum of $L_1$ and $L_2$, and
\item over $\ZZ/2 \ZZ$ and for any two (unoriented) links $L_1,L_2$, the elementary merge cobordism $$\mu: \Kh(L_1 \cup L_2) \rightarrow \Kh(L_1 \# L_2)$$ is surjective for any choice of connected sum.
\end{enumerate}

Our strategy is to find diagrams for a mutant pair $L,L'$ in {\it standard form} (Figure \ref{fig:mutation-standard}).  We then apply the unoriented skein exact sequence to relate $\Kh(L)$ and $\Kh(L')$. 

\begin{figure}[htpb!]
\centering
\labellist
	\small\hair 2pt
	\pinlabel $T_1$ at 50 100
	\pinlabel $T_2$ at 242 100
	\pinlabel $T_1$ at 400 100
	\pinlabel $T_2$ at 592 100
\endlabellist
\includegraphics[width=.45\textwidth]{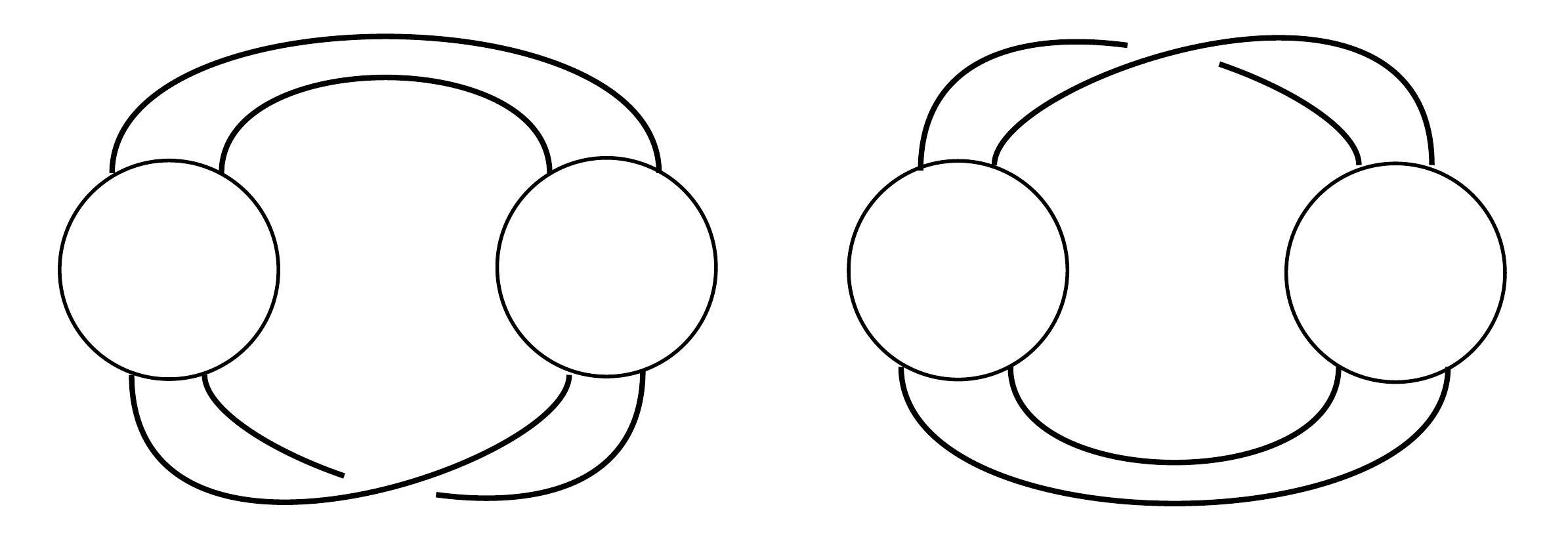}
\caption{A mutant pair in standard form}
\label{fig:mutation-standard}
\end{figure}

Over a different coefficient ring or with respect to a different link homology theory, Facts (2) and (3) may or may not hold.  In particular, Facts (2) and (3) are not true in general for Khovanov homology over $\ZZ$ or $\QQ$.  Moreover, it is exactly the failure of (2) that leads to Wehrli's examples of links distinguished by $\Kh$ with $\QQ$ coefficients \cite{Wehrli-KH-examples}.  With extra conditions, we can adapt this proof of mutation invariance to other coefficient rings or link homology theories.

\begin{theorem}
\label{thrm:Kh-QQ-mutation-invariance}
Let $L,L'$ be links such that $L'$ is obtained from $L$ by mutating the tangle $T$ by the involution $\tau$.  Let $\cC_{\tau}(T)$ denote the set of rational closures of $T$ corresponding to the mutation.  If $\cC_{\tau}(T)$ contains the unlink on any number of components, then for any field $\FF$ there is a bigraded isomorphism
\[\Kh_{\FF}(L) \cong \Kh_{\FF}(L')\]
In particular
\[\Kh_{\QQ}(L) \cong \Kh_{\QQ}(L')\]
\end{theorem}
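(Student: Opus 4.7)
The plan is to adapt the proof of Theorem \ref{thrm:Kh-mutation-invariance} from $\ZZ/2\ZZ$ to an arbitrary field $\FF$, with the hypothesis on $\cC_\tau(T)$ playing the role of Facts (2) and (3), which fail over a general field. Place $L$ and $L'$ in standard form (Figure \ref{fig:mutation-standard}), with $T$ the mutated tangle and $T_c$ its complement tangle, and fix---by hypothesis---a trivial tangle $T_0 \in \cC_\tau(T)$ such that the rational closure $U \coloneqq T \cup T_0$ is an unlink. Because the two arcs of $T_0$ connect boundary points exchanged by $\tau$, the involution extends over $S^3$ preserving $T_0$ setwise, so $\tau(T) \cup T_0 \cong U$ as well; hence when the complement tangle $T_c$ equals $T_0$ we have $L = L' = U$ and the conclusion is immediate.

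The argument then proceeds by induction on the number of crossings in $T_c$, applying the unoriented skein exact triangle (Fact (1)) at a chosen crossing:
\[ \cdots \to \Kh_\FF(L) \to \Kh_\FF(L_0) \to \Kh_\FF(L_1) \to \Kh_\FF(L)[1] \to \cdots, \]
and analogously for $L'$. Since the resolution is entirely within $T_c$, the smoothings $L_0, L_1$ and $L'_0, L'_1$ remain mutant pairs via the same $\tau$ on the same $T$, but with simpler complement tangle; the inductive hypothesis yields isomorphisms $\Kh_\FF(L_i) \cong \Kh_\FF(L'_i)$. The saddle cobordism inducing the skein map is supported inside $T_c$ and hence is local to the complement of the mutated region, so one expects it to commute with the mutation on $T$ and produce a morphism between the mapping cones computing $\Kh_\FF(L)$ and $\Kh_\FF(L')$.

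The main obstacle has two aspects. First, one must organize the inductive reduction so that it actually terminates at the desired base case $T_c = T_0$: every leaf of the resulting cube of resolutions must correspond to a tangle for which mutation invariance is already known, either from an earlier step of the induction or directly from the unlink base case, which may require choosing a diagram in which $T_c$ appears as $T_0$ with additional crossings inserted. Second, to promote the resolution isomorphisms to an isomorphism $\Kh_\FF(L) \cong \Kh_\FF(L')$, the naturality of the skein-triangle maps under mutation must be established carefully at the chain level, so that the induced morphism of mapping cones is a quasi-isomorphism. Both aspects should be addressable by exploiting the locality of the skein saddle together with careful tracking of the inductive isomorphisms back to the identity on $\Kh_\FF(U)$ in the base case.
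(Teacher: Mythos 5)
There is a genuine gap, and it is fatal rather than technical. Your induction on the crossings of the complementary tangle $T_c$ produces, at each step, abstract isomorphisms $\Kh_{\FF}(L_0)\cong\Kh_{\FF}(L'_0)$ and $\Kh_{\FF}(L_1)\cong\Kh_{\FF}(L'_1)$, but to pass to $\Kh_{\FF}(L)\cong\Kh_{\FF}(L')$ through the skein triangle you need these isomorphisms to intertwine the saddle maps (or at least you need the two connecting maps to have equal graded ranks). Nothing in the inductive hypothesis gives you this: the isomorphisms come out of exactness and rank counting, not out of natural chain maps, and the expectation that the saddle in $T_c$ "commutes with the mutation" is exactly the statement that has no proof. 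Worse, if this naturality could be established, your hypothesis on $\cC_{\tau}(T)$ would become irrelevant: the terminal resolutions of $T_c$ are crossingless, hence trivial (rational) tangles plus split unknots, and mutation along a Conway sphere bounding a rational tangle is an isotopy, so all your base cases hold with no assumption on $T$ at all. Your scheme would then prove mutation invariance of $\Kh_{\QQ}$ for arbitrary mutations, contradicting Wehrli's examples ($T(2,3)\cup T(2,3)$ versus $T(2,3)\# T(2,3)\cup U$) cited in the paper. This shows the missing commutativity cannot be supplied in the generality you need, and also signals that your argument never actually engages with where the unlink hypothesis must do work.

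The paper's proof uses the hypothesis in a completely different place. By Lemma \ref{lemma:Q-closure-standard} one chooses diagrams in standard form so that the numerator closure $N(T_1)$ of the mutated tangle is the unlink; one then resolves the two band crossings (not crossings of the complement), obtaining the nine links $\cL_{\bullet,\circ}$ and the commutative diagram of skein triangles of Figure \ref{fig:9-commutative-KFr}. The unlink hypothesis enters through Lemma \ref{lemma:Hopf-QQ-surjective}: over any field, the merge maps $f_0,k_0$ from $\Kh_{\FF}(N(T_1)\cup N(T_2))$ onto the connected sums are surjective because the Kunneth formulas of Lemma \ref{lemma:Kh-Kunneth} hold when one summand is an unlink. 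Surjectivity plus commutativity of the square gives $\Image(f_1)=\Image(k_1)$, and then the identification $\Kh_{\FF}(\cL_{1,0})\cong\Kh_{\FF}(\cL_{0,1})$, the grading bookkeeping of Lemma \ref{lemma:pos-neg-x}, rank--nullity, and exactness yield the bigraded isomorphism $\Kh_{\FF}(L)\cong\Kh_{\FF}(L')$. If you want to salvage your write-up, replace the induction by this single two-triangle argument; the surjectivity of the merge maps is the substitute for the naturality you could not establish.
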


\begin{remark}
Theorem \ref{thrm:Kh-QQ-mutation-invariance} applies to {\it all} mutations, not simply component--preserving mutations.  However, it only applies to unreduced Khovanov homology.  If $L,L'$ contain a basepoint $p$ in $T$, then a mutation satisfying the hypotheses can swap the component containing $p$.
\end{remark}

A corollary to Theorem \ref{thrm:Kh-QQ-mutation-invariance} is that all mutant pairs in the Kinoshita-Terasaka and Conway families have isomorphic Khovanov homology.

\begin{theorem}
\label{thrm:KT-mutation-Khovanov}
For all $r,n \in \ZZ$ and any field $\FF$, there is a bigraded isomorphism
\[\Kh_{\FF}( KT_{r,n}) \cong \Kh_{\FF}( C_{r,n})\]
\end{theorem}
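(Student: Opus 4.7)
The plan is to reduce this directly to Theorem~\ref{thrm:Kh-QQ-mutation-invariance}. By construction, the Conway mutant $C_{r,n}$ is obtained from the Kinoshita-Terasaka knot $KT_{r,n}$ by mutating a specific tangle $T = T(r,n)$ along the involution $\tau$ displayed in Figure~\ref{fig:KT}. Thus I only need to exhibit an unlink in the set $\cC_{\tau}(T)$ of rational closures of $T$ corresponding to $\tau$, uniformly in the parameters $r,n$.

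To that end, I would fix the diagrammatic presentation of $KT_{r,n}$ exhibiting the Conway sphere that bounds $T(r,n)$, with $r$ and $n$ recording the two twist regions inside $T$. I then cap off the four endpoints of $T$ by the trivial tangle $T_0$ whose arcs connect the pairs of endpoints exchanged by $\tau$, that is, I form the rational closure in $\cC_{\tau}(T)$ determined by the ``$\tau$-numerator'' pattern. A direct Reidemeister-move argument, essentially a re-run of the classical Kinoshita-Terasaka construction read in this closed-up diagram, shows that both twist regions can be isotoped away and the remaining strands trivialized, producing the unknot. Hence the unknot lies in $\cC_{\tau}(T(r,n))$ for every $r,n \in \ZZ$, and Theorem~\ref{thrm:Kh-QQ-mutation-invariance} applies and yields the desired bigraded isomorphism $\Kh_{\FF}(KT_{r,n}) \cong \Kh_{\FF}(C_{r,n})$ for every field $\FF$.

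The only substantive content beyond citing Theorem~\ref{thrm:Kh-QQ-mutation-invariance} is the diagrammatic verification that the $\tau$-numerator closure of the mutation tangle in $KT_{r,n}$ is the unknot uniformly in $r,n$. The main obstacle is bookkeeping: choosing a diagram of $KT_{r,n}$ in which both (a) the mutating involution $\tau$ is easily identified with a $\pi$-rotation of a round Conway sphere, and (b) the closed-up diagram admits a short Reidemeister sequence trivializing it for all values of the integer parameters simultaneously. Once such a diagram is chosen, as indicated in Figure~\ref{fig:KT}, the reduction to Theorem~\ref{thrm:Kh-QQ-mutation-invariance} is immediate.
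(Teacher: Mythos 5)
Your overall plan --- reduce to Theorem~\ref{thrm:Kh-QQ-mutation-invariance} by exhibiting an unlink among the rational closures of the mutated tangle corresponding to the involution $\tau$ --- is exactly the paper's strategy, which it carries out via Lemma~\ref{lemma:KT-Q-closure}. However, your identification of the tangle contains an error that is load-bearing. You write that the Conway sphere bounds a tangle $T(r,n)$ ``with $r$ and $n$ recording the two twist regions inside $T$.'' In fact, the relevant Conway sphere for $KT_{r,n}$ bounds a tangle that depends only on $r$ (the paper calls it $T_r$); it contains the twist regions with $r$ and $-r-1$ crossings from the underlying pretzel knot $P(-r,r+1,r,-r-1)$, while the $n$ full twists lie entirely outside the sphere. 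Once this is correctly identified, the verification that the $\tau$-numerator closure is the unknot is a one-line observation: the closure strings the two twist regions in series, yielding the torus link $T(2,\, r + (-r-1)) = T(2,-1)$, which is the unknot for every $r$ (and independently of $n$).

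Your proposal instead asserts, from the premise that the twist regions carry $r$ and $n$ crossings, that ``both twist regions can be isotoped away.'' If that premise were correct, the assertion would be false uniformly in $r,n$ --- a generic pair of twist regions with $r$ and $n$ crossings does not cancel --- so the hand-waved Reidemeister-move argument would not close the gap. The conclusion you reach is correct, but the reasoning you offer for it would not hold if your description of the tangle were accurate, and it is precisely that description that needs fixing. With the tangle correctly identified, the rest of your reduction to Theorem~\ref{thrm:Kh-QQ-mutation-invariance} is fine and matches the paper.
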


\subsection{Knot Floer homology}

A variant of knot Floer homology, the tilde group $\HFKt(L,\p)$ of a pointed link, satisfies an unoriented skein exact triangle \cite{Manolescu-skein}.  Moreover, the invariant $\HFKh$ satisfies a Kunneth formula for any choice of connected sum \cite{OS-HFK,OS-HFL}.  However, the elementary merge map
\[\mu: \HFKt(L_1 \cup L_2, \p) \rightarrow \HFKt(L_1 \# L_2,\p)\]
is never surjective.  In fact, the rank of $\mu$ is exactly $\frac{1}{2} \rk \HFKt(L_1 \cup L_2,\p) = \frac{1}{2} \rk \HFKt(L_1 \# L_2,\p)$.  Nonetheless, the proof of Theorem \ref{thrm:Kh-mutation-invariance} provides a strategy to prove Conjecture \ref{conj:mutation-invariance}.  The main theorem of this paper is to carry out that strategy for mutations on a large class of tangles.  This provides the first step in the proof of Conjecture \ref{conj:mutation-invariance}.

\begin{theorem}
\label{thrm:HFK-mutation-invariance}
Let $L,L'$ be links such that $L'$ is obtained from $L$ by mutating the tangle $T$ by the involution $\tau$.  Let $\cC_{\tau}(T)$ denote the set of rational closures of $T$ corresponding to the mutation.  If $\cC_{\tau}(T)$ contains the unlink on any number of components, then there is an isomorphism
\[\HFKh_{\delta}(L') \cong \HFKh_{\delta}(L)\]
\end{theorem}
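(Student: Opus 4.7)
My plan is to mirror the strategy used for Theorem \ref{thrm:Kh-QQ-mutation-invariance}, substituting Manolescu's unoriented skein exact triangle for $\HFKt$ in place of the Khovanov skein sequence and the K\"unneth formula for $\HFKh$ in place of the connected-sum identifications. The decisive new ingredient is the Clifford module structure on $\HFKt$ induced by the basepoint maps, which will serve as the Floer-theoretic substitute for the merge-surjectivity statement (fact (3) in the Khovanov argument) that fails in this setting.

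First, I would place $L$ and $L'$ in standard form, decomposing them as $L = T \cup T'$ and $L' = \tau(T) \cup T'$. Using the hypothesis that $\cC_\tau(T)$ contains an unlink, I select a trivial tangle $T_0$ whose arcs are exchanged by $\tau$ and satisfies $T \cup T_0 = U_n$, an unlink on $n$ components. Iterating the skein exact triangle at each crossing of $T'$ expresses $\HFKt(L, \p)$ as an iterated mapping cone over the cube of $2^c$ pure resolutions of $T'$, whose vertices carry the groups $\HFKt(T \cup \sigma, \p)$. The cube for $L'$ is indexed by the same resolutions $\sigma$, but with vertex groups $\HFKt(\tau(T) \cup \sigma, \p)$.

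I would then compare the two cubes vertex by vertex. At $\sigma = T_0$, the two links $T \cup T_0$ and $\tau(T) \cup T_0$ are both unlinks by the $\tau$-invariance of $T_0$, and the groups agree canonically. At any other vertex, the links $T \cup \sigma$ and $\tau(T) \cup \sigma$ are two rational closures of $T$, and by the classification of rational tangles, $\sigma$ is connected to $T_0$ by a finite sequence of elementary rational changes, each of which itself fits into a further unoriented skein triangle. Iterating this secondary reduction expresses every vertex-wise comparison in terms of the unlink vertex, where the identification is canonical.

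The hard part is the failure of the merge map $\mu$ to be surjective on $\HFKt$, so that naive cone comparisons break down: whenever a resolution splits off an unknotted component, the connecting map in the skein triangle can introduce a discrepancy between the cubes for $L$ and $L'$. To overcome this, I would invoke the Clifford module structure: the basepoint endomorphisms $\Phi_i$ act on $\HFKt$ with $\Phi_i^2 = 0$, and $\HFKh_\delta$ is recovered from $\HFKt$ by passing to homology with respect to a suitable combination of these generators. The cokernel of $\mu$ lies precisely in the Clifford submodule that is annihilated in the passage from $\HFKt$ to $\HFKh_\delta$, so the cube comparison becomes valid on the $\delta$-graded level and yields the isomorphism $\HFKh_\delta(L) \cong \HFKh_\delta(L')$.
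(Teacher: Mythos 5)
Your overall ingredients are in the right spirit --- standard form, skein triangles, the unlink hypothesis, and the Clifford/basepoint structure to compensate for the non-surjectivity of the merge map --- but the specific mechanism you propose has two serious problems and does not match the paper's argument.

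First, the strategy of resolving \emph{every} crossing of the complementary tangle $T'$ into a cube of $2^c$ resolutions is both harder than necessary and, as you have set it up, circular. For a generic crossingless resolution $\sigma$ of $T'$, the links $T \cup \sigma$ and $\tau(T)\cup\sigma$ are two \emph{different} rational closures of the same tangle $T$ (since $\tau(T)\cup\sigma \cong T\cup\tau^{-1}\sigma$), and they are themselves mutants. Establishing that their $\delta$-graded $\HFKh$ groups agree is exactly the theorem you are trying to prove; your ``secondary reduction'' by further skein triangles does not break the circle, because exact triangles propagate rank inequalities, not isomorphisms of the abutments. There is also no reason that your chosen closure $T_0$ (the one making $T\cup T_0$ an unlink) appears among the resolutions of $T'$: those are determined by the given diagram of $T'$, not by your choice of rational closure. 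The paper avoids all of this by arranging the diagram (Lemma \ref{lemma:Q-closure-standard}) so that the mutation is visible at exactly two band crossings, which produces a $3\times 3$ commuting grid of skein triangles (Figure \ref{fig:9-commutative-GHt}) with the unlink sitting at the single corner $\cL_{0,0} = N(T_1)\cup N(T_2)$; only that one vertex needs to be controlled. Finally, even if all your vertex groups agreed, you would still need the cube edge maps to agree up to filtered homotopy to conclude anything about the iterated mapping cone --- a point you do not address.

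Second, your description of how the Clifford structure rescues the merge-surjectivity failure is not correct. You say ``the cokernel of $\mu$ lies precisely in the Clifford submodule that is annihilated in the passage from $\HFKt$ to $\HFKh_\delta$,'' but $\HFKh$ is not obtained from $\HFKt$ by quotienting or by taking homology of a basepoint operator; rather $\HFKt \cong \HFKh\otimes V^{\otimes(n-l)}$, and nothing is annihilated. What the paper actually does is sharper: it shows (Lemma \ref{lemma:virtually-surjective}) that since the central operator $w_T+z_T$ is \emph{identically zero} on $\GHt(\GG_{0,0})$ when $N(T_1)$ is an unlink (Lemma \ref{lemma:basepoints-unknot} plus the K\"unneth structure of Proposition \ref{prop:basepoints-kunneth}), the images of the merge maps $f_0,k_0$ are ``virtually surjective'' --- they generate the codomain after applying the orthogonal idempotents $w_1 z_2$ and $z_2 w_1$. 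This, combined with the pseudo-equivariance of the skein maps (Proposition \ref{prop:basepoint-equivariance}), forces the crucial rank identity $\rk\,\Image_\delta(f_1) = 2\,\rk\,\Image_\delta(f_1\circ k_0) = \rk\,\Image_\delta(k_1)$ (Lemma \ref{lemma:double-rank}), from which the theorem follows by exactness and rank--nullity. To repair your argument you would need to reproduce this rank computation at the two distinguished crossings rather than appeal to a non-existent annihilation mechanism.
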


A key technical tool in the proof of Theorem \ref{thrm:HFK-mutation-invariance} is the extra algebraic structure on unreduced knot Floer homology $\HFKt(L,\p)$ of a pointed link $(L,\p)$.   Each basepoint in a multi-pointed Heegaard diagram for $(L,\p)$ determines a differential on $\HFKt(L,\p)$.  These basepoint actions have previously been applied in \cite{BL-spanning,BVV,Sarkar-basepoints,BLS,Zemke-basepoints}.  The combined actions, subject to anticommutation relations described in Subsection \ref{sub:basepoints}, make $\HFKt(L,\p)$ a Clifford module over a Clifford algebra $\Omega_{\nu}$.  This Clifford module structure is an algebraic quantization of the exterior algebra structure used in \cite{BL-spanning,BLS}.

Importantly, the maps in the unoriented skein exact triangle are mostly but not fully compatible with the basepoint actions.  In particular, the three homology groups in the skein triangle are modules over slightly different Clifford algebras.  In Subsection \ref{sub:basepoint-equivariance}, we quantify the skein maps' failure to be fully $\Omega_{\nu}$-linear.  Nonetheless, this weaker equivariance condition is sufficient to prove the theorem.

We remark that the following statement is an easy corollary of Theorem \ref{thrm:HFK-mutation-invariance}.

\begin{corollary}
Let $L,L'$ be links satisfying the hypotheses of Theorem \ref{thrm:HFK-mutation-invariance} and suppose that $\HFKh(L)$ is thin.  Then $\HFKh(L')$ is also thin.  Moreover, if $L,L'$ are knots then
\begin{enumerate}
\item $g(L') = g(L)$,
\item $L'$ is fibered if and only if $L$ is fibered, and
\item $\tau(L) = \tau(L')$
\end{enumerate}
\end{corollary}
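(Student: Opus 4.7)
The plan is to leverage the $\delta$-graded mutation invariance from Theorem \ref{thrm:HFK-mutation-invariance} together with the classical facts that Conway mutation preserves both the Alexander polynomial and the signature. Thinness is a purely $\delta$-graded property: $\HFKh(L)$ is thin precisely when it is concentrated on a single diagonal $m - a = \delta_0$, equivalently $\HFKh_\delta(L) = 0$ for all $\delta \neq \delta_0$. The isomorphism $\HFKh_\delta(L) \cong \HFKh_\delta(L')$ immediately transports this vanishing to $L'$, so $L'$ is thin on the same diagonal $\delta_0$.

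For knots, I would upgrade the $\delta$-graded agreement to a full bigraded isomorphism using the standard structural fact that the bigraded knot Floer homology of a thin knot is determined by the Alexander polynomial together with the diagonal $\delta_0$: the rank in Alexander grading $a$ equals the absolute value of the coefficient of $t^a$ in $\Delta_K(t)$, sitting in Maslov grading $a + \delta_0$. Since $\Delta_L = \Delta_{L'}$ by the classical mutation invariance of the Alexander polynomial, and both knots are now known to be thin on the same diagonal, this forces a bigraded isomorphism $\HFKh(L) \cong \HFKh(L')$.

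From this bigraded isomorphism, statements (1) and (2) follow immediately: Ozsv\'ath--Szab\'o's genus detection gives $g(L) = \max\{a : \HFKh(L,a) \neq 0\} = g(L')$, and Ghiggini--Ni's fiberedness detection (rank-one-ness of $\HFKh$ in the top Alexander grading) is likewise visible from the bigraded group. For (3), I would invoke the identity $\tau(K) = -\sigma(K)/2$ valid for thin knots, combined with the classical mutation invariance of the signature $\sigma$. I do not expect any individual step to be a serious obstacle, since the hard content is packaged into Theorem \ref{thrm:HFK-mutation-invariance}; the only mildly delicate point is the bigraded reconstruction for thin knots, which is essentially the well-known observation that the Floer complex of a thin knot is determined by its Alexander polynomial and signature.
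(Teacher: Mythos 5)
The paper omits the proof of this corollary entirely, stating only that it is ``an easy corollary'' of Theorem~\ref{thrm:HFK-mutation-invariance}, so there is no printed argument to compare against. Your argument is correct and is the natural route: thinness is visible from the $\delta$-graded group alone, so it transfers through the isomorphism of Theorem~\ref{thrm:HFK-mutation-invariance}; for thin knots the bigraded group is then reconstructible from $\Delta_K$ (mutation-invariant) and the thin diagonal $\delta_0$ (equal by the $\delta$-graded isomorphism), since in Alexander grading $a$ everything sits in Maslov grading $a+\delta_0$ and the rank is the absolute value of the coefficient of $\Delta_K$; items (1) and (2) then follow from Ozsv\'ath--Szab\'o genus detection and Ghiggini--Ni fiberedness detection, and item (3) follows from $\tau=-\sigma/2$ for thin knots together with mutation-invariance of $\sigma$. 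A small tidying remark: for (1) you do not actually need the full bigraded reconstruction, since for a thin knot $g(K)=\deg\Delta_K$ directly, and (3) needs no bigraded data at all, only the transferred thinness plus classical invariance of $\sigma$; but packaging everything through the bigraded isomorphism as you do is perfectly valid. No gaps.
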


We can apply Theorem \ref{thrm:HFK-mutation-invariance} in a few ways.  First, Ozsv{\'a}th and Szab{\'o} computed the top--degree $\HFKh$ groups for the Kinoshita-Terasaka and Conway families and showed that $KT_{r,n}$ and $C_{r,n}$ are distinguished by their bigraded knot Floer groups \cite{OS-mutation}.  This extends an earlier result of Gabai calculating their genera \cite{Gabai-genera}.  However, using Theorem \ref{thrm:HFK-mutation-invariance}, we can deduce that $KT_{r,n}$ and $C_{r,n}$ have isomorphic $\delta$--graded groups.

\begin{theorem}
\label{thrm:KT-mutation}
For all $r,n \in \ZZ$, there is a graded isomorphism
\[\HFKh_{\delta}(KT_{r,n}) \cong \HFKh_{\delta}(C_{r,n})\]
\end{theorem}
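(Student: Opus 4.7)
The plan is to invoke Theorem~\ref{thrm:HFK-mutation-invariance} directly. To do so, it suffices to exhibit, for every $r,n\in\ZZ$, a presentation of $KT_{r,n}$ as a union $(B^3_1,T)\cup_\phi(B^3_2,T')$ such that Conway mutation of the tangle $T$ by the appropriate involution $\tau$ produces $C_{r,n}$, and such that the set $\cC_\tau(T)$ of rational closures corresponding to $\tau$ contains an unlink.

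First I would recall the construction of the Kinoshita--Terasaka and Conway families. By definition, $C_{r,n}$ is obtained from $KT_{r,n}$ by Conway mutation along a standard Conway sphere $S$ bounding a particular tangle $T=T(r,n)$; this is the same Conway sphere that exhibits $C_{2,1}=11n_{34}$ as the mutant of $KT_{2,1}=11n_{42}$, with the twist parameters $r,n$ inserted in the complementary tangle. I would take the diagram referenced in the excerpt (Figure~\ref{fig:KT}) as the standard form for this mutation, so that $T$ is fixed independent of $r,n$ and only the complementary tangle $T'=T'(r,n)$ depends on the parameters.

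Next I would verify the key hypothesis: that the numerator closure of $T$ is the unknot and that this numerator closure lies in $\cC_\tau(T)$. The first point is exactly the content of the parenthetical remark in the paper's introduction (``We can choose a diagram for $KT_{r,n}$ so that the numerator closure of the mutated tangle is the unknot''), which one reads off Figure~\ref{fig:KT}: replacing $T'(r,n)$ by the rational tangle $T_0$ corresponding to the numerator closure of $T$ yields an unknotted diagram, and this persists through a straightforward Reidemeister/planar isotopy argument. The second point requires checking that the two boundary arcs of $T_0$ connect endpoints of $\partial T$ that are exchanged by the mutation involution $\tau$; this is a direct visual check on the chosen diagram, since $\tau$ is one of the three coordinate $\pi$--rotations and the pairing of endpoints by $T_0$ is determined by the standard form.

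With these two facts in hand, $\cC_\tau(T)$ contains the unknot, so Theorem~\ref{thrm:HFK-mutation-invariance} applies and yields the desired graded isomorphism
\[\HFKh_\delta(KT_{r,n})\cong \HFKh_\delta(C_{r,n}).\]
The main (and really only) obstacle is the combinatorial verification involving Figure~\ref{fig:KT}: namely, that a single choice of Conway sphere in the KT family simultaneously (i) realizes $C_{r,n}$ as a mutant of $KT_{r,n}$ for all $r,n$ and (ii) has an unknotted numerator closure compatible with the mutation involution. Once this is pinned down from the figure, the theorem is an immediate corollary. The same setup will be reused to prove Theorem~\ref{thrm:KT-mutation-Khovanov} via Theorem~\ref{thrm:Kh-QQ-mutation-invariance}.
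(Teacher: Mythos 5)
Your proposal is correct and follows essentially the same route as the paper: apply Theorem~\ref{thrm:HFK-mutation-invariance} after verifying (the paper's Lemma~\ref{lemma:KT-Q-closure}) that the numerator closure of the mutated tangle $T_r$ is the unknot, which holds because the $r$ and $-r-1$ twist regions inside the Conway sphere cancel to $T(2,-1)$. One small inaccuracy worth flagging: the tangle $T_r$ bounded by the Conway sphere in Figure~\ref{fig:KT} does depend on $r$ (it contains the $r$ and $-r-1$ twist regions), so it is not ``fixed independent of $r,n$'' as you wrote; what is independent of $r$ is its numerator closure, which is why the hypothesis of Theorem~\ref{thrm:HFK-mutation-invariance} holds uniformly.
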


Secondly, De Wit and Links and, independently, Stoimenow give enumerations of 11- and 12-crossing mutant cliques (6 alternating and 10 nonalternating 11-crossing pairs; 27 pairs and 2 triples of alternating 12-crossing knots; 43 pairs and 3 triples of nonalternating 12-crossing knots) \cite{DeWit-Links,Stoimenow}.  The $\delta$--graded knot Floer groups of an alternating link is determined by the determinant and signature \cite{OS-alternating}.  Since these are mutation--invariant, Conjecture \ref{conj:mutation-invariance} holds for alternating links.  Moreover,  all nonalternating cliques in the De Wit-Links and Stoimenow enumerations admit a mutation in a minimal diagram on one of the tangles in Figure \ref{fig:3-tangles}.  The numerator closures of these 3 tangles are unlinked.  Applying Theorem \ref{thrm:HFK-mutation-invariance}, we recover the computational results of \cite{BG-computations}.

\begin{theorem}
\label{thrm:low-crossing}
Let $K,K'$ be mutant knots with crossing number at most 12.  Then
\[\HFKh_{\delta}(K) \cong \HFKh_{\delta}(K')\]
for all $\delta \in \ZZ$.
\end{theorem}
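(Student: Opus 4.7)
The plan is to split the proof into two cases according to whether the knots are alternating or nonalternating, invoking existing enumerations of low-crossing mutant cliques and then appealing to Theorem \ref{thrm:HFK-mutation-invariance} on a case-by-case basis.

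First I would dispatch the alternating case. By Ozsv\'ath--Szab\'o's result on alternating links, $\HFKh_{\delta}(K)$ is determined (up to graded isomorphism) by the determinant $\det(K)$ and signature $\sigma(K)$. Both of these classical invariants are well known to be mutation-invariant (the determinant because the Alexander polynomial is, and the signature by standard Goeritz-form arguments). Hence $\HFKh_{\delta}(K) \cong \HFKh_{\delta}(K')$ follows immediately whenever $K,K'$ are alternating mutants, which handles the 6 alternating 11-crossing pairs and the 27 pairs plus 2 triples of alternating 12-crossing cliques listed by De Wit--Links and Stoimenow.

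For the nonalternating case I would appeal directly to the enumerations in \cite{DeWit-Links,Stoimenow}, which list the 10 nonalternating 11-crossing pairs and the 43 pairs plus 3 triples of nonalternating 12-crossing cliques. The claim to verify, stated in the paragraph preceding the theorem, is that every such clique admits a minimal diagram containing a Conway sphere whose mutation tangle $T$ is one of the three tangles in Figure \ref{fig:3-tangles}. Once this is known, the numerator closure of each of those three tangles is an unlink, hence lies in $\cC_{\tau}(T)$, and Theorem \ref{thrm:HFK-mutation-invariance} applies directly to yield $\HFKh_{\delta}(K) \cong \HFKh_{\delta}(K')$ for every pair in the clique. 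For triples, one applies the theorem to each of the three pairwise mutations.

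The main obstacle is the combinatorial verification that every nonalternating 11- or 12-crossing mutant clique can indeed be realized by a mutation on one of the three tangles of Figure \ref{fig:3-tangles}. This is essentially a finite check against the tabulated diagrams in \cite{DeWit-Links,Stoimenow}: for each clique one locates a Conway sphere in a minimal diagram, identifies the mutation tangle, and matches it against the three model tangles. I would carry out this inspection tangle-by-tangle, perhaps consolidating the recurrent patterns (for instance, mutations at a rational subtangle whose complement is a rational tangle, for which the numerator closure is governed by the Kauffman--Lambropoulou formula $ps+qr \in \{-1,0,1\}$ mentioned earlier) to shorten the bookkeeping. No new Floer-theoretic input is needed beyond Theorem \ref{thrm:HFK-mutation-invariance}.
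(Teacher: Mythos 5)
Your argument is essentially the one the paper gives: alternating cliques are handled via the Ozsv\'ath--Szab\'o description of $\HFKh_{\delta}$ of an alternating link in terms of determinant and signature (both mutation-invariant), while nonalternating cliques are reduced to a finite inspection showing each mutation is realized on one of the three tangles of Figure~\ref{fig:3-tangles}, whose numerator closures are unlinks, so that Theorem~\ref{thrm:HFK-mutation-invariance} applies. This is precisely the paper's route via Lemmas~\ref{lemma:11-12} and~\ref{lemma:U-closure}.
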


\subsection{Khovanov-Floer theories}
\label{sub:intro-KF}

Interestingly, the geometric arguments in the proof of Theorem \ref{thrm:Kh-mutation-invariance} apply more generally to other link homology theories.

Baldwin, Hedden and Lobb introduced the notion of a {\it Khovanov-Floer theory} \cite{BHL-KF}.  See Section \ref{sec:KF} for a definition.  This framework encompasses several link homology theories --- Heegaard Floer homology of the double-branched cover $\Sigma(L)$ \cite{OS-2cover}, singular instanton homology \cite{KM-Kh}, Szab{\'o}'s cube of resolutions \cite{Szabo-GSS}, Bar-Natan's construction of Lee homology over $\ZZ/2\ZZ$ \cite{BN-Kh} --- that admit similar spectral sequences.  In particular, the homology groups can be computed from filtered complexes and the $E^2$--pages of the corresponding spectral sequences are isomorphic to Khovanov homology.  Moreover, many of these theories appear insensitive to Conway mutation.  

It is possible, but unknown, that all Khovanov-Floer theories must also satisfy extra elementary properties of Khovanov homology with $\ZZ/2\ZZ$--coefficients.  In order to give sufficient conditions for a Khovanov-Floer theory to be mutation--invariant, we introduce the notion of an extended Khovanov-Floer theory.  

\begin{definition}
An {\it extended Khovanov-Floer theory} is a pair $\cA,\cA^r$ consisting of an unreduced and reduced Khovanov-Floer theories, respectively, satisfying the following 3 extra axioms:

\begin{enumerate}
\item $\cA(L) = \cA^r(L \cup U,p)$ for a basepoint $p$ on the unknot component $U$,
\item $\cA$ and $\cA^r$ satisfy unoriented skein exact triangles, and
\item up to isomorphism, $\cA^r(L,p)$ is independent of the component containing $p$.
\end{enumerate}
\end{definition}

The proof of Theorem \ref{thrm:Kh-mutation-invariance} in Section \ref{sec:Kh} easily adapts to prove the following theorem.  We state it without reference to grading, although we expect it can be extended to a graded statement by inspecting the relevant exact triangle.

\begin{theorem}
\label{thrm:KF-mutation-invariance}
Let $\cA,\cA^r$ be an extended Khovanov-Floer theory.  Then $\cA$ and $\cA^r$ are invariant under Conway mutation.
\end{theorem}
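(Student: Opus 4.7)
The plan is to adapt the proof of Theorem~\ref{thrm:Kh-mutation-invariance} essentially verbatim, replacing each of the three facts invoked there about $\Kh$ over $\ZZ/2\ZZ$ with one of the three axioms of an extended Khovanov-Floer theory. First I would place the mutant pair $L,L'$ into the standard form of Figure~\ref{fig:mutation-standard}, and then apply the unoriented skein exact triangle guaranteed by axiom (2) to a chosen crossing inside the mutated tangle. This produces parallel long exact sequences for $L$ and $L'$. Inducting on the number of crossings in the tangle and identifying the two sequences term-by-term should then yield $\cA(L) \cong \cA(L')$, and the same argument applied to $\cA^r$ yields the reduced statement.

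The essential content is in extracting from axioms (1) and (3) the analogues of Facts (2) and (3) used in the Khovanov argument. For connected-sum independence of $\cA$, one writes
\[\cA(L_1 \# L_2) \cong \cA^r\bigl((L_1 \# L_2) \cup U,\,p\bigr)\]
via axiom (1), with $p$ on the auxiliary unknot $U$. By axiom (3) one can then transport $p$ onto any component of $L_1 \# L_2$, and in particular place it on either side of the connected-sum site. Comparing two different choices of connected-sum operation and applying axiom (3) once more identifies the resulting invariants up to isomorphism, so $\cA(L_1 \# L_2)$ is independent of the choice of connected sum. The analogue of Fact (3)---that the elementary merge cobordism $\cA(L_1 \cup L_2) \to \cA(L_1 \# L_2)$ is surjective---should follow from the unoriented skein exact triangle of axiom (2) applied at the merge site, once the third term in that triangle is identified via axioms (1) and (3) and the identifications are checked to be compatible with the connecting map.

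The principal obstacle I expect is not any of these identifications individually but the bookkeeping required to verify that the isomorphisms produced by axiom (3) are sufficiently natural with respect to the skein maps of axiom (2) for the diagram chases of Theorem~\ref{thrm:Kh-mutation-invariance} to go through. An extended Khovanov-Floer theory does not assert this compatibility explicitly, so one must either argue it directly from the naturality built into the definition of a Khovanov-Floer theory---both the skein maps and the basepoint-move isomorphisms are ultimately induced by elementary link cobordisms---or else absorb any non-commutativity into an intermediate isomorphism that is still an isomorphism. Once that commutativity is established, the geometric argument of Section~\ref{sec:Kh} transports directly to give mutation invariance for both $\cA$ and $\cA^r$.
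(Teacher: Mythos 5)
Your high-level strategy is correct and matches the paper: put the mutant pair in standard form, run the $3\times3$ diagram of skein exact sequences, extract a K\"unneth-type lemma from axioms (1) and (3), and extract merge-map surjectivity from rank counting. Two corrections are worth making.

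First, there is no ``induction on the number of crossings in the tangle.'' The argument you should be transporting is the fixed nine-link grid of Figure~\ref{fig:9-mutation} and the commutative diagram of Figure~\ref{fig:9-commutative-KFr}: one applies the skein triangle at the two auxiliary crossings of $\cL_{\infty,\infty}$, compares the two middle sequences through $\cA(\cL_{1,1})$, and is done. The tangles $T_1,T_2$ never get resolved further, so no induction is available or needed.

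Second, the worry about whether the basepoint-move isomorphisms of axiom (3) are natural with respect to the skein maps of axiom (2) is largely misplaced, and noticing that it does not arise is precisely what makes the paper's proof go through cleanly. Inspecting the argument, the inputs that actually enter are: (a) commutativity $f_1\circ k_0 = k_1\circ f_0$, which comes from functoriality of $\cA$ on cobordisms since the two handle attachments are supported in disjoint regions---this has nothing to do with the isomorphisms of axiom (3); (b) surjectivity of $f_0,k_0$, which is the content of Lemma~\ref{lemma:KF-disjoint-split} and is obtained purely by rank counting (the skein triangle is extremal because $\rk\cA(L_1\cup L_2) = 2\cdot\rk\cA(L_1\#L_2)$, using $\rk\cA(U)=2$ from axiom (4) of the Khovanov-Floer definition and the K\"unneth lemma); and (c) the rank equality $\rk\cA(\cL_{1,0})=\rk\cA(\cL_{0,1})$, which again is only used as a numerical equality, not as a commuting square. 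Once (a),(b),(c) are in hand, one gets $\Image(f_1)=\Image(k_1)$ and concludes by rank--nullity. In other words, the proof is a rank argument over a field and never needs the axiom-(3) isomorphism to interact compatibly with the skein maps. With these two adjustments your proposal reproduces the paper's argument.
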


For example, let $\Sigma(L)$ denote the double branched cover of $L \subset S^3$ and let $\HFh$ denote the Heegaard Floer homology with $\ZZ/2\ZZ$--coefficients.  Setting $\cA^r(L,p) = \HFh(-\Sigma(L))$ for any choice of basepoint $p$ and $\cA(L) = \HFh(-\Sigma(L) \# S^1 \times S^2)$ yields an extended Khovanov-Floer theory.  The mutation--invariance of $\Sigma(L)$, and thus $\HFh(-\Sigma(L))$, is a well--known fact \cite{Viro}. 

Another potential extended Khovanov-Floer theory is Szab{\'o}'s geometric spectral sequence \cite{Szabo-GSS}.  For a link $L$, each link diagram $\mathcal{D}$ and {\it decoration} $\mathbf{t}$ determines a filtered chain complex $\widehat{C}(\mathcal{D},\mathbf{t})$.  The pages of the corresponding spectral sequence are independent of $\mathcal{D}$ and $\mathbf{t}$ and so are invariants of $L$.  The complex $\widehat{C}(\mathcal{D},\mathbf{t})$ is constructed via a cube of resolutions, so its homology satisfies an unoriented skein triangle.  Moreover, there are naturally reduced complexes $\overline{C}(\mathcal{D},\mathbf{t}), \widetilde{C}(\mathcal{D},\mathbf{t})$ determined by a basepoint on $L$.  Extensive computational work by Seed is consistent with the conjecture that $H_*(\widehat{C}(\mathcal{D},\mathbf{t})) = \HFh(-\Sigma(L))$, that the reduced homology is independent of the basepoint, and that each page of the spectral sequence is mutation--invariant \cite{Seed-comp}.  Thus we conjecture the following:

\begin{conjecture}
Szab{\'o}'s geometric spectral sequence is an extended Khovanov-Floer theory.
\end{conjecture}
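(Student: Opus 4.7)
The plan is to verify directly each of the three extended Khovanov-Floer axioms for Szab\'o's unreduced and reduced complexes $\widehat{C}(\mathcal{D},\mathbf{t})$ and $\widetilde{C}(\mathcal{D},\mathbf{t})$, in addition to confirming that each is a Khovanov-Floer theory in the original Baldwin-Hedden-Lobb sense. The latter requires functoriality under link cobordisms (via bridge diagrams and movie moves) and the identification of the $E^2$-page with Khovanov homology, both of which are known or follow from Szab\'o's original construction. The bulk of the work is then the three extra axioms.

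Axiom 2, the unoriented skein exact triangle, is essentially built into the cube-of-resolutions construction: resolving a single crossing yields a short exact sequence of filtered chain complexes
\[
0 \rightarrow \widehat{C}(\mathcal{D}_0,\mathbf{t}) \rightarrow \widehat{C}(\mathcal{D},\mathbf{t}) \rightarrow \widehat{C}(\mathcal{D}_1,\mathbf{t}) \rightarrow 0,
\]
with an analogous sequence in the reduced setting, yielding the desired long exact triangle on homology. The main check is that Szab\'o's higher-configuration differentials respect this splitting into $0$- and $1$-resolution summands at the chosen crossing, which should be a formal consequence of the definition. Axiom 1, the identification $\cA(L) = \cA^r(L \sqcup U, p)$ for a basepoint $p$ on the added unknot, should follow from a model computation on a disjoint-union diagram of $L \sqcup U$: the cube for $L \sqcup U$ tensors that of $L$ with a two-dimensional factor from $U$, and reduction at a basepoint on $U$ cuts this factor to one dimension per vertex, producing an isomorphism of filtered complexes with $\widehat{C}(\mathcal{D},\mathbf{t})$. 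Diagram-invariance of Szab\'o's theory then upgrades this to a link-level statement.

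The main obstacle is axiom 3: that $\cA^r(L,p)$ is independent, up to isomorphism, of the component containing $p$. This is the essential content of Seed's computational conjecture and is not established by the earlier work in the paper. A natural strategy is to adapt the $\mathbb{Z}/2\mathbb{Z}$ Khovanov argument: build a chain map between the two reduced complexes by transporting the basepoint through the diagram along an explicit isotopy and show it is a quasi-isomorphism. The difficulty is that Szab\'o's differential incorporates terms indexed by decorated configurations of resolution arcs, and one must verify that the transport map intertwines all of these higher terms, not merely the Khovanov piece; this reduces to a case analysis on how configurations meet the moving basepoint, which appears delicate but mechanical.

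An alternative, and likely more fruitful, route would be to first resolve the sister conjecture $H_*(\widehat{C}(\mathcal{D},\mathbf{t})) \cong \HFh(-\Sigma(L))$, since the double branched cover manifestly does not depend on a basepoint; this would give axiom 3 at the level of homology for free, leaving only the problem of propagating basepoint independence upward through the spectral sequence. In either approach, I would end by invoking Theorem \ref{thrm:KF-mutation-invariance} to conclude mutation-invariance of each page of Szab\'o's spectral sequence, matching Seed's computational evidence.
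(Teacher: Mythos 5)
The statement you are addressing is not a theorem of the paper but a conjecture: the author offers only Seed's computational evidence and the observation that the cube-of-resolutions construction should yield a skein triangle, and proves nothing about Szab\'o's theory beyond that. So there is no proof in the paper to compare against, and the real question is whether your proposal establishes the statement on its own. It does not: by your own account it is a plan whose decisive step is left open, so what you have written is a (reasonable) reduction of the conjecture to other open problems, not a proof.

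Concretely, the gap is axiom (3), basepoint independence of the reduced theory $\cA^r(L,p)$, exactly as you say --- but neither of your proposed routes closes it. Transporting the basepoint through the diagram and checking compatibility with all of Szab\'o's decorated-configuration maps is precisely the unperformed case analysis that makes the conjecture nontrivial; note that even for $\ZZ/2\ZZ$ Khovanov homology, basepoint independence is a genuine theorem (Proposition \ref{prop:basepoint-independence}, due to Shumakovitch and Ozsv\'ath--Szab\'o), not a formal transport argument, so there is no reason to expect the Szab\'o case to be ``mechanical.'' The alternative route via $H_*(\widehat{C}(\mathcal{D},\mathbf{t})) \cong \HFh(-\Sigma(L))$ assumes the other standing conjecture, and even granting it you would obtain basepoint independence only of the total homology (equivalently the $E^{\infty}$ information), whereas the extended Khovanov-Floer axioms, and the rank arguments in the proof of Theorem \ref{thrm:KF-mutation-invariance}, require independence of $\cA^r(L,p)$ as a link invariant --- i.e.\ of every page $\cA^r_i$ and of the maps the theory assigns --- so ``propagating upward through the spectral sequence'' is itself an unproved step, since knowing $E^2$ (reduced Khovanov homology) and the abutment does not determine the intermediate pages. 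Finally, axiom (1) also needs an argument you only gesture at: one must check that reduction at a basepoint on the split unknot component defines a filtered subcomplex compatible with all higher configuration maps (the disjoint circle is passive for the Khovanov piece, but this must be verified for Szab\'o's full differential), and that the resulting identification respects quasi-isomorphism classes. Until these points are supplied, the statement remains a conjecture, which is exactly how the paper records it.
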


Finally, the singular instanton homology groups $I^{\#},I^{\natural}$ defined by Kronheimer and Mrowka are known to satisfy the Khovanov-Floer axioms \cite{BHL-KF}.  In addition, they satisfy an unoriented skein exact triangle.

\begin{question}
\label{question:SIH}
Are the singular instanton homology groups $I^{\#}(L),I^{\natural}(L)$ an extended Khovanov-Floer theory?
\end{question}

A positive answer to Question \ref{question:SIH} would imply, via Theorem \ref{thrm:KF-mutation-invariance}, that the singular instanton homology groups $I^{\#},I^{\natural}$ are mutation--invariant over $\ZZ/2\ZZ$.  Regardless of the answer, however, we can prove an analogous statement to Theorems \ref{thrm:Kh-QQ-mutation-invariance} and \ref{thrm:KT-mutation-Khovanov}.

\begin{theorem}
\label{thrm:SIH-mutation}
Let $L,L'$ be links such that $L'$ is obtained from $L$ by mutating the tangle $T$ by the involution $\tau$.  Let $\cC_{\tau}(T)$ denote the set of rational closures of $T$ corresponding to the mutation.  If $\cC_{\tau}(T)$ contains the unlink on any number of components, then for any field $\FF$ there is an isomorphism
\[I^{\#}_{\FF}(L) \cong I^{\#}_{\FF}(L')\]
Consequently, for all $r,n \in \ZZ$, there is an isomorphism
\[I^{\#}_{\FF}(KT_{r,n}) \cong I^{\#}_{\FF}(C_{r,n}) \]
\end{theorem}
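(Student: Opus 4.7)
The plan is to repeat the proof of Theorem~\ref{thrm:Kh-QQ-mutation-invariance} verbatim, with $I^{\#}_{\FF}$ in place of $\Kh_{\FF}$. That argument uses only two formal properties of the invariant: the existence of an unoriented skein exact triangle and the computation on unlinks. Both are available for singular instanton homology --- Kronheimer and Mrowka established the unoriented skein exact triangle for $I^{\#}$, and they also computed $I^{\#}$ of the unlink on any number of components --- so the formal input is identical.

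Concretely, one puts $L$ and $L'$ in the standard form of Figure~\ref{fig:mutation-standard}, so that the involution $\tau$ pairs a crossing $c$ in $T$ with a crossing $\tau c$ in $\tau T$. Applying the unoriented skein triangle at $c$ and at $\tau c$ yields parallel long exact sequences
\begin{align*}
\cdots \to I^{\#}_{\FF}(L) &\to I^{\#}_{\FF}(L_0) \to I^{\#}_{\FF}(L_1) \to I^{\#}_{\FF}(L)[1] \to \cdots, \\
\cdots \to I^{\#}_{\FF}(L') &\to I^{\#}_{\FF}(L_0') \to I^{\#}_{\FF}(L_1') \to I^{\#}_{\FF}(L')[1] \to \cdots,
\end{align*}
in which each $L_i$ is paired with $L_i'$ by the mutation $\tau$ acting on a tangle with one fewer crossing, and in which the unlink-closure hypothesis is preserved at each step by keeping the prescribed rational gluing fixed. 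Iterating produces an unoriented cube of resolutions whose vertices are rational closures of the outer tangle; the hypothesis that $\cC_{\tau}(T)$ contains the unlink singles out one privileged vertex of this cube at which both the $L$- and $L'$-sides give unlinks and hence have identical $I^{\#}_{\FF}$. A naturality/five-lemma argument then propagates the vertexwise identifications up the cube and yields $I^{\#}_{\FF}(L) \cong I^{\#}_{\FF}(L')$. The consequence for the Kinoshita-Terasaka and Conway families is immediate, since, as recorded in the introduction and used for Theorem~\ref{thrm:KT-mutation-Khovanov}, the $(KT_{r,n}, C_{r,n})$ mutation admits a diagram in which the numerator closure of the mutated tangle is the unknot.

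The main obstacle is the bookkeeping that tracks the mutation symmetry through the cube of resolutions and verifies naturality of the $I^{\#}$ skein triangle with respect to the isotopies identifying mutant pairs at each vertex; this is exactly where the Khovanov proof consumes the unlink hypothesis, and I expect no genuinely new difficulty beyond transcribing it to the instanton setting. Note that a positive answer to Question~\ref{question:SIH} would place this result under the umbrella of Theorem~\ref{thrm:KF-mutation-invariance}, but the unlink hypothesis is enough to conclude mutation invariance independently of that program.
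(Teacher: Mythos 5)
Your opening sentence --- transcribe the proof of Theorem \ref{thrm:Kh-QQ-mutation-invariance} with $I^{\#}_{\FF}$ in place of $\Kh_{\FF}$ --- is exactly the paper's proof, but the concrete argument you then describe is not that proof, and it does not work. The paper never resolves crossings inside the mutated tangle $T$. It puts $L,L'$ in the standard form of Figure \ref{fig:mutation-standard}, using Lemma \ref{lemma:Q-closure-standard} to arrange that the numerator closure $N(T_1)$ is the unlink supplied by the hypothesis, and then resolves only the two band crossings, producing the nine links of Figure \ref{fig:9-mutation} and a $3\times 3$ commutative diagram of skein exact sequences in which $\cL_{0,0}=N(T_1)\cup N(T_2)$ and $\cL_{1,0},\cL_{0,1}$ are connected sums of $N(T_1)$ and $N(T_2)$. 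The hypothesis is consumed in proving the instanton analogue of Lemma \ref{lemma:Hopf-QQ-surjective}: because $N(T_1)$ is an unlink, $N(T_1)\# N(T_2)$ is a disjoint union of $N(T_2)$ with a smaller unlink, so a rank count using the disjoint-union (Kunneth) formula for $I^{\#}$ and $\rk I^{\#}(U)=2$ forces the merge maps $f_0,k_0$ to be surjective over any field; commutativity then gives $\Image(f_1)=\Image(f_1\circ k_0)=\Image(k_1\circ f_0)=\Image(k_1)$, and rank--nullity plus exactness yield $I^{\#}_{\FF}(L)\cong I^{\#}_{\FF}(L')$. So the formal inputs are not just ``the skein triangle and the computation on unlinks'': you also need the behaviour of $I^{\#}$ under disjoint union (hence under connected sum with an unlink), which is precisely what makes the merge maps surjective and is where the unlink hypothesis enters.

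Your cube-of-resolutions plan has two genuine gaps. First, the unlink-closure hypothesis is a property of $T$ itself, not of its resolutions: if you resolve a crossing inside $T$, the resulting tangles generally have no rational closure corresponding to the mutation that is an unlink (for the Kinoshita--Terasaka tangle, resolving a crossing in a twist region destroys the cancellation $r+(-r-1)=-1$ that makes the numerator closure the unknot), so the hypothesis is not ``preserved at each step.'' Second, agreement of the two long exact sequences at one privileged vertex (or even at all crossingless vertices) does not let a five-lemma argument propagate isomorphisms back up the cube: in each triangle only one of the three terms is identified, and the comparison maps between the $L$- and $L'$-sides are not known to commute with the skein maps. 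Indeed, if such an induction worked it would prove unconditional mutation invariance from the skein triangle and the unlink computation alone; those same formal properties hold for $\Kh_{\QQ}$, which is not mutation invariant (Wehrli's examples), so some further input is unavoidable. The correct route is the one you named at the outset and then abandoned: repeat Lemma \ref{lemma:Hopf-QQ-surjective}, Theorem \ref{thrm:Kh-QQ-mutation-invariance} and Theorem \ref{thrm:KT-mutation-Khovanov} \emph{mutatis mutandis} for $I^{\#}_{\FF}$, checking that $I^{\#}$ satisfies the unoriented skein triangle and the disjoint-union formula those arguments require.
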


\subsection{Acknowledgements}

I would like to thank Matt Hogancamp for many useful discussions on homological algebra.  In addition, several people have helped with apt suggestions, technical details and their general interest, including John Baldwin, Matt Hedden, Adam Levine, Tye Lidman and Zoltan Szab{\'o}.

\section{Khovanov mutation invariance}
\label{sec:Kh}

In this section, we give a new proof that Khovanov homology with $\ZZ/2\ZZ$ coefficients is invariant under Conway mutation.  Similar geometric arguments will be applied in successive sections to establish mutation--invariance results for knot Floer homology and other Khovanov-Floer theories.

Bloom proved that odd Khovanov homology is invariant under mutation, which implies that (even) Khovanov homology with $\ZZ/2\ZZ$ coefficients is also invariant \cite{Bloom-Odd-Khovanov}.  In addition, Wehrli showed that Bar-Natan's Khovanov bracket over $\ZZ/2\ZZ$ is invariant under component--preserving mutation \cite{Wehrli-Kh-mutation}.  Conversely, Wehrli has also observed that Khovanov homology is not mutation--invariant with $\QQ$ coefficients.  The links $T(2,3) \cup T(2,3)$ and $T(2,3) \# T(2,3) \cup U$ are mutants but are distinguished by their Khovanov groups over $\QQ$ \cite{Wehrli-KH-examples}.

\subsection{Khovanov homology}

Khovanov homology is an oriented link invariant obtained by applying a (1+1)-dimensional TQFT to the cube of resolutions of a link diagram.  In this first subsection, we will sketch a definition of Khovanov homology, state some well--known properties and prove some elementary properties.

Let $D$ be a planar diagram of an oriented link $L$ with $n$ crossings and fix an enumeration of the crossings.  Each given crossing can be resolved in two ways, the 0--resolution and 1--resolution.  The {\it cube of resolutions} of $D$ is the collection of $2^n$ planar diagrams obtained by resolving all the crossings of $D$ in all possible ways.  In particular, for each vertex $I \in \{0,1\}^n$, there is a diagram $D_I$ of an unlink obtained by resolving the crossings of $D$ according to the vector $I$.  Let $l_I$ denote the number of link components of $D_I$.  The edges of the cube are given by ordered pairs $I,J$ of vertices such that $I_k \leq J_k$ for $k=1,\dots,n$ and $|J| - |I| = 1$.  Geometrically, each edge corresponds to replacing a single 0--resolution by a 1--resolution and either {\it merges} two components into one or {\it splits} one component into two.

The algebra $A \coloneqq \ZZ [x]/x^2$ is a Frobenius algebra with multiplication $\mu: A \otimes A \rightarrow A$ defined by
\begin{align*}
\mu(1 \otimes 1) &\coloneqq 1 & \mu(1 \otimes x) &\coloneqq x &
\mu(x \otimes 1) &\coloneqq x & \mu(x_1\otimes x) &\coloneqq 0
\end{align*}
and comultiplication $\sigma: A \rightarrow A \otimes A$ defined by
\begin{align*}
\sigma(1) &\coloneqq x \otimes 1 + 1 \otimes x & \sigma(x) &\coloneqq x \otimes x
\end{align*}
To each vertex $I$ of the cube assign the chain group $\CKh(D_I) \coloneqq A^{\otimes l_I}$, with one copy of $A$ for each component of $D_I$.  The chain group of the Khovanov complex is the direct sum of the chain groups for each vertex
\[\CKh(D) \coloneqq \bigoplus_{I \in \{0,1\}^n} \CKh(D_I)\]
The differential $d_{\Kh}$ is the sum of maps associated to each edge.  For each edge $I,J$ of the cube, there is a component $d_{I,J}$ of the differential determined up to sign by the Frobenius algebra.  If the edge is a merge map, then $d_{I,J}$ is defined by applying the multiplication map $\mu$ to the $A$-factors of $\CKh(D_I)$ corresponding to the merged components and extending this by the identity to the remaining components.  If the edge is a split map, then $d_{I,J}$ is defined similarly using the comultiplication $\sigma$ instead.  {\it Khovanov homology} $\Kh(D)$ is the homology of the complex $(\CKh(D),d_{\Kh})$.

The complex $(\CKh(D),d_{\Kh})$ possess two gradings, the {\it quantum} and {\it homological} grading, and the differential preserves the quantum grading and increases the homological grading by one.  Thus the homology $\Kh(D)$ splits into the direct sum of bigraded modules $\Kh^{i,j}(D)$ where $i$ denotes the homological grading and $j$ the quantum grading.

Let $p$ be a fixed basepoint in the plane contained in the diagram $D$.  The basepoint $p$ determines a chain map $X_p: \CKh(D) \rightarrow \CKh(D)$ that squares to 0.  The kernel of $X_p$ is a subcomplex and {\it reduced Khovanov homology} is its homology, with a shift in the quantum grading:
\[ \Khr(D,p) = H_*(\Sigma^{0,1} \ker X_p)\]
It is an invariant of $L$ up to isotopies supported away from $p$.  While over $\ZZ$ the reduced homology depends on the component containing the basepoint, this is not true over $\ZZ/2\ZZ$.
\begin{proposition}[\cite{Shumakovitch,OS-2cover}]
\label{prop:basepoint-independence}
Over $\ZZ/2\ZZ$, there is a bigraded isomorphism
\[\Kh (L) \cong \Khr(L,p) \otimes \Kh (U)\]
for every link $L$ and any basepoint $p$.  In particular, $\Khr(L)$ is well--defined independent of $p$.
\end{proposition}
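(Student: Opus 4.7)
My plan is to produce an explicit chain-level endomorphism $\nu$ of $\CKh(D)$ that, together with the basepoint action $X_p$, exhibits $\CKh(D)$ as a free module over $A = \FF_2[x]/(x^2)$ at the chain level. I would define $\nu$ vertex-by-vertex: at each resolution $D_I$ with chain group $A^{\otimes l_I}$, take $\nu = \sum_{k=1}^{l_I}\nu^{(k)}$, where $\nu^{(k)}$ is the identity on all but the $k$-th factor and on that factor sends $v_+ \mapsto 0$, $v_- \mapsto v_+$. The bidegree is $(0,2)$.

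The main verification is that $\nu$ is a chain map, $\nu d = d\nu$. Edges of the cube whose merge or split does not touch the $k$-th circle commute with $\nu^{(k)}$ automatically, so the nontrivial content is purely local: $\nu\colon A \to A$ must be a derivation for $\mu$ and a coderivation for $\sigma$. Each is a four-case check on the basis $\{v_+,v_-\}$. In two of those cases one side evaluates to $2\,v_-$ or $2\,v_+\otimes v_+$ while the other is $0$; these identities hold precisely because the coefficient ring is $\FF_2$. This is the key obstacle and the only point where the $\FF_2$ hypothesis is genuinely used --- the same construction fails over $\ZZ$ or $\QQ$.

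Given $\nu$, the relations $\{X_p,\nu\} = \mathrm{id}$ and $\nu^2 = 0$ follow immediately: all cross-terms $\{X_p,\nu^{(k)}\}$ with $k\neq 1$ and $\nu^{(k)}\nu^{(l)} + \nu^{(l)}\nu^{(k)}$ vanish in characteristic $2$ because the relevant operators act on disjoint tensor factors, and the remaining local $A$-computations on the basepoint factor are trivial. Hence $e_1 = X_p\nu$ and $e_2 = \nu X_p$ are orthogonal idempotent chain endomorphisms of bidegree $(0,0)$ summing to the identity, with $\mathrm{im}(e_1) = \ker X_p$, and $X_p$ restricts to a chain isomorphism $\mathrm{im}(e_2) \to \ker X_p$ of bidegree $(0,-2)$ with chain inverse $\nu$.

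Therefore $\CKh(D) \cong \ker X_p \oplus (\ker X_p)[0,2]$ as bigraded chain complexes, and passing to homology and unfolding the shift in the definition $\Khr(L,p) = H_*(\Sigma^{0,1}\ker X_p)$ yields the desired decomposition $\Kh(L) \cong \Khr(L,p) \otimes A = \Khr(L,p) \otimes \Kh(U)$. Basepoint independence follows at once: for any two basepoints $p,p'$ on $L$, both $\Khr(L,p)\otimes A$ and $\Khr(L,p')\otimes A$ are isomorphic to $\Kh(L)$, and dividing the bigraded Poincar\'e polynomials by that of $A$ (which is $q+q^{-1}$) forces $\Khr(L,p) \cong \Khr(L,p')$ as bigraded $\FF_2$-vector spaces.
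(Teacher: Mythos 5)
Your proof is correct. The paper itself states Proposition~\ref{prop:basepoint-independence} as a cited result (Shumakovitch, Ozsv\'ath--Szab\'o) and gives no in-paper argument, so there is no internal proof to compare against; what you have written is a faithful reconstruction of Shumakovitch's argument via the chain-level map $\nu$ with $\nu^2 = 0$ and $\{X_p,\nu\}=\mathrm{id}$, and it is exactly the standard route. Two small remarks: the claim $\mathrm{im}(e_1)=\ker X_p$ deserves one explicit line ($\mathrm{im}(e_1)\subseteq\mathrm{im}(X_p)\subseteq\ker X_p$, and conversely $v\in\ker X_p$ gives $v=e_1(v)+e_2(v)=e_1(v)$), and when you divide Poincar\'e polynomials by $q+q^{-1}$ to deduce basepoint independence you are implicitly using that $\FF_2[q,q^{-1}]$ is an integral domain --- both are fine but worth saying out loud in a final write-up.
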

Consequently, when discussing reduced Khovanov homology over $\ZZ/2\ZZ$ we will suppress any mention of the basepoint $p$.

Khovanov homology satisfies Kunneth--type formulas for disjoint unions and connected sums.  The following properties are well--known and the proofs are easy deductions from the definition of Khovanov homology and Proposition \ref{prop:basepoint-independence}.

\begin{lemma}
\label{lemma:Kh-Kunneth}
Let $L_1,L_2$ be arbitrary links.  Over $\ZZ/2\ZZ$ there are isomorphisms
\begin{align*}
\Kh(L_1 \cup L_2) & \cong \Kh(L_1) \otimes \Kh(L_2) \\
\Khr(L_1 \# L_2) & \cong \Khr(L_1) \otimes \Khr(L_2) \\
\Kh(L_1 \# L_2) \otimes \Kh(U) &\cong \Kh(L_1) \otimes \Kh(L_2) 
\end{align*}
for any choice of connected sum.
\end{lemma}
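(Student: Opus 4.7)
The plan is to prove the three isomorphisms in sequence. The first is a direct Kunneth computation on a split diagram, the second is the technical heart, and the third follows formally from the second and Proposition~\ref{prop:basepoint-independence}.

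For the disjoint union, choose diagrams $D_1, D_2$ and use the split diagram $D_1 \sqcup D_2$. Its cube of resolutions is the Cartesian product of the sub-cubes for $D_1$ and $D_2$, and at each vertex the Frobenius TQFT gives $\CKh(D_{1,I_1} \sqcup D_{2,I_2}) = \CKh(D_{1,I_1}) \otimes \CKh(D_{2,I_2})$. Every edge of the product cube comes from exactly one sub-cube, so the total differential is $d = d_1 \otimes \mathrm{id} + \mathrm{id} \otimes d_2$, and $\CKh(D_1 \sqcup D_2) \cong \CKh(D_1) \otimes \CKh(D_2)$ as chain complexes. The Kunneth theorem over $\FF_2$ yields the first isomorphism.

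For the reduced connected sum, form $D_1 \# D_2$ at a region containing basepoints $p_1, p_2$ and place the new basepoint $p$ there. At each vertex $(I_1, I_2)$ the merge cobordism from $D_{1,I_1} \sqcup D_{2,I_2}$ to $D_{1,I_1} \# D_{2,I_2}$ induces a surjective chain map
\[
\mu : \CKh(D_1) \otimes \CKh(D_2) \to \CKh(D_1 \# D_2)
\]
that applies $\mu_A : A \otimes A \to A$ to the two factors containing $p_1, p_2$ and is the identity elsewhere. Since $\mu \circ X_{p_1} = \mu \circ X_{p_2} = X_p \circ \mu$ and each vertex chain group is a free module over $A$ under the appropriate basepoint action, $\mu$ descends to a chain map $\bar\mu$ between reduced complexes (using the identification $\ker X_\bullet \cong \CKh/X_\bullet$ up to grading shift). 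At each vertex $\bar\mu$ is the identity on the $l_{I_1} + l_{I_2} - 2$ non-basepoint factors and the canonical isomorphism $A/(x) \otimes A/(x) \to A/(x)$, i.e.\ $\FF_2 \otimes \FF_2 \to \FF_2$, on the basepoint factor. Hence $\bar\mu$ is a chain isomorphism, and Kunneth over $\FF_2$ gives $\Khr(L_1 \# L_2) \cong \Khr(L_1) \otimes \Khr(L_2)$.

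For the third isomorphism, combine the previous step with Proposition~\ref{prop:basepoint-independence}:
\[
\Kh(L_1 \# L_2) \otimes \Kh(U) \cong \Khr(L_1 \# L_2) \otimes \Kh(U)^{\otimes 2} \cong \Khr(L_1) \otimes \Khr(L_2) \otimes \Kh(U)^{\otimes 2} \cong \Kh(L_1) \otimes \Kh(L_2).
\]
Because each right-hand side is manifestly independent of the choice of components to connect sum, the same is true of each left-hand side. The main obstacle is step two: carefully verifying that $\bar\mu$ commutes with every edge map in the cube. This amounts to a case analysis of how the Frobenius operations $\mu_A, \sigma_A$ interact with the quotient $A \to A/(x) = \FF_2$ when one of the affected components contains a basepoint, but in each case the two sides of the square (edge then $\bar\mu$, versus $\bar\mu$ then edge) evaluate to the same reduced Frobenius operation.
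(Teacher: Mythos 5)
Your proposal is correct, and it fills in exactly the argument the paper leaves implicit: the paper offers no written proof of this lemma, remarking only that the isomorphisms are well known and follow easily from the definition of Khovanov homology and Proposition \ref{prop:basepoint-independence}. Your chain-level verification (tensor-product cube for the disjoint union, the merge map on the quotient model of the reduced complex giving a vertex-wise isomorphism for the connected sum, and Proposition \ref{prop:basepoint-independence} for the unreduced formula and for independence of the choice of connected sum) is precisely the standard deduction the paper has in mind.
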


Khovanov homology satisfies an {\it unoriented skein exact triangle}.  Fix a crossing of $D$ and let $D_0,D_1$ denote the 0-- and 1--resolutions of $D$ at this crossing.  Since Khovanov homology is computed from a cube of resolutions, the complex $\CKh(D)$ is, up to a grading shift, the mapping cone of a chain map
\[f: \CKh(D_0) \rightarrow \CKh(D_1)\]
Consequently there is an exact triangle
\[\xymatrix{
\Kh(D_0) \ar[rr]^{f^*} && \Kh(D_1) \ar[ld] \\
& \Kh(D) \ar[lu]^{[-1]} & }\]
The bigrading shift of $f$ depends on whether the resolved crossing is positive or negative.  Let $n_-$ be the number of negative crossings in $D$ and let $n_-^0,n_-^1$ be the number of negative crossings in $D_0$ and $D_1$, respectively.  Then the Khovanov differential determines chain maps 
\begin{align*}
f_+: &\CKh^{i,j}(D_0) \rightarrow \CKh^{i-c,j-1-3c}(D_1) & f_-:& \CKh^{i-d-1,j-2-3d}(D_0) \rightarrow \CKh^{i,j}(D_1)
\end{align*}
where $c \coloneqq n_-^1 - n_-$ and $d \coloneqq n_-^0 - n_-$.

\subsection{Connected sum and disjoint union}
\label{sub:Kh-connected-sum}

Given any two oriented links $L_1,L_2$ and any choice of connected sum, there is an exact triangle
\begin{align}
\label{eq:disjoint-skein}
\xymatrix{
\Kh(L_1 \# L_2) \ar[rr]^{\beta} && \Kh(L_1 \# -L_2) \ar[dl]^{\sigma} \\
& \Kh(L_1 \cup L_2) \ar[lu]^{\mu} & }
\end{align}
where $-L_2$ denotes reversing the orientation on all components (cf. \cite[Section 7.4]{Khovanov-Jones} and \cite[Section 3]{Rasmussen-S}).   Reduced Khovanov homology satisfies an identical triangle.

\begin{lemma}
\label{lemma:Kh-disjoint-split}
Let $L_1,L_2$ be any oriented links.  Over $\ZZ/2\ZZ$, the map $\beta$ in the skein exact triangle is identically 0 and there is a short exact sequence
\[ \xymatrix{ & 0 \ar[r] & \Kh(L_1 \# - L_2) \ar[r]^{\sigma} &\Kh(L_1 \cup L_2) \ar[r]^{\mu} &\Kh(L_1 \# L_2) \ar[r] & 0 &}\]
In particular, the merge map
\[\mu:\Kh(L_1 \cup L_2)  \rightarrow \Kh(L_1 \# L_2)\]
is surjective and the split map
\[ \sigma: \Kh(L_1 \# - L_2) \rightarrow \Kh(L_1 \cup L_2)\]
is injective.

Moreover, identical statements hold for reduced Khovanov homology.
\end{lemma}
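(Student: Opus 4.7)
The plan is to force $\beta = 0$ by a rank count, using the Kunneth formulas of Lemma \ref{lemma:Kh-Kunneth} together with exactness of the triangle \eqref{eq:disjoint-skein}.

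Set $d \coloneqq \dim_{\FF_2} \Kh(L_1 \# L_2)$. Since $\dim_{\FF_2} \Kh(U) = 2$, the third Kunneth isomorphism of Lemma \ref{lemma:Kh-Kunneth} gives $\dim \Kh(L_1 \cup L_2) = 2d$. Khovanov homology is insensitive to reversing all orientations of a link (the crossing sign counts are unchanged), so $\dim \Kh(L_2) = \dim \Kh(-L_2)$, and applying Kunneth a second time yields $\dim \Kh(L_1 \# -L_2) = d$. Thus the three vertices of the triangle have dimensions $d$, $d$, and $2d$.

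Now unroll the triangle into a long exact sequence and write $a$, $b$, $c$ for the ranks of $\beta$, $\sigma$, $\mu$. Exactness combined with rank-nullity at the three vertices yields the linear system $a + c = d$, $a + b = d$, $b + c = 2d$, whose unique solution is $a = 0$ and $b = c = d$. Hence $\beta \equiv 0$, the long exact sequence collapses to the short exact sequence in the statement, $\sigma$ is injective, and $\mu$ is surjective. The reduced case is identical: combining Proposition \ref{prop:basepoint-independence} with Lemma \ref{lemma:Kh-Kunneth} shows $\dim \Khr(L_1 \cup L_2) = 2 \dim \Khr(L_1 \# L_2)$ and $\dim \Khr(L_1 \# -L_2) = \dim \Khr(L_1 \# L_2)$, so the same linear system applies to the reduced analog of the triangle.

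I do not anticipate a serious obstacle. The entire argument hinges on the doubling $\dim \Kh(L_1 \cup L_2) = 2 \dim \Kh(L_1 \# L_2)$, which holds over $\FF_2$ by Proposition \ref{prop:basepoint-independence} but fails in general over $\ZZ$ or $\QQ$ --- consistent with Wehrli's rationally-distinguishing mutant examples cited in the introduction.
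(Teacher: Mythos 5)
Your proof is correct and follows essentially the same approach as the paper's: both use the Kunneth formulas to establish that the three vertices of the triangle have ranks $d$, $d$, $2d$, and then deduce from exactness that $\beta$ must vanish. The only difference is presentational --- the paper phrases the conclusion as an "extremal case of the triangle inequality," whereas you solve the rank-nullity linear system explicitly.
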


\begin{proof}
Over $\ZZ/2\ZZ$, the Kunneth principle implies that
\[\Kh(L_1 \cup L_2) \cong \Kh(L_1) \otimes \Kh(L_2)\]
Furthermore, Lemma \ref{lemma:Kh-Kunneth} implies that
\begin{align*}
\Kh(L_1) \otimes \Kh(L_2) &\cong \Kh(L_1 \# L_2) \otimes \Kh(U) \\
& \cong \Kh(L_1 \# - L_2) \otimes \Kh(U)
\end{align*}
Consequently, since $\rk \Kh(U) = 2$, it follows that
\[\rk \Kh(L_1 \cup L_2) =  2 \cdot \rk \Kh(L_1 \# L_2) = 2 \cdot \rk \Kh(L_1 \# \widetilde{L_2})\]
Thus, this skein triple is an extremal case for the triangle inequality and the triangle unfolds to a short exact sequence.  

Over $\ZZ/2\ZZ$, we have that $\rk \Kh(L) = 2 \cdot \rk \Khr(L)$ for all links $L$ by Proposition \ref{prop:basepoint-independence}.  Therefore, the above argument can be applied {\it mutatis mutandis} to reduced Khovanov homology.
\end{proof}

Lemma \ref{lemma:Kh-disjoint-split} can be extended with arbitrary field coefficients when $L_2$ is the unlink on any number of components.

\begin{lemma}
\label{lemma:Hopf-QQ-surjective}
Let $L_1$ be an arbitrary link and let $L_2 = U_k$ denote the $k$-component unlink.  Over any field $\FF$, the map $\beta$ in the skein exact triangle is identically 0 and there is a short exact sequence
\[ \xymatrix{ & 0 \ar[r] & \Kh_{\FF}(L_1 \# - L_2) \ar[r]^{\sigma} &\Kh_{\FF}(L_1 \cup L_2) \ar[r]^{\mu} &\Kh_{\FF}(L_1 \# L_2) \ar[r] & 0 &}\]
In particular, the merge map
\[\mu:\Kh_{\FF}(L_1 \cup L_2)  \rightarrow \Kh_{\FF}(L_1 \# L_2)\]
is surjective and the split map
\[ \sigma: \Kh_{\FF}(L_1 \# - L_2) \rightarrow \Kh_{\FF}(L_1 \cup L_2)\]
is injective.
\end{lemma}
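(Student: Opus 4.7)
The plan is to mirror the proof of Lemma \ref{lemma:Kh-disjoint-split} almost verbatim, replacing only the one step where the general proof genuinely needed $\ZZ/2\ZZ$ coefficients. That step was the identification $\Kh(L_1) \otimes \Kh(L_2) \cong \Kh(L_1 \# L_2) \otimes \Kh(U)$ in Lemma \ref{lemma:Kh-Kunneth}, which rested on Proposition \ref{prop:basepoint-independence} and fails over $\QQ$ in general. The hypothesis $L_2 = U_k$ will let me sidestep it by a direct isotopy argument.

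First, I would record that the Kunneth formula for disjoint unions,
\[\Kh_\FF(L_1 \cup L_2) \cong \Kh_\FF(L_1) \otimes \Kh_\FF(L_2),\]
holds over every field because the Khovanov chain complex of a split link is literally the tensor product of the two factor complexes. In particular $\rk \Kh_\FF(L_1 \cup U_k) = 2^k \cdot \rk \Kh_\FF(L_1)$. Next, I would observe that for any choice of connected sum, $L_1 \# U_k$ is isotopic to $L_1 \cup U_{k-1}$: connect-summing an unknotted component back onto $L_1$ simply restores the component it was summed with. Since reversing orientations on unknots does nothing, the same identification $L_1 \# -L_2 \cong L_1 \cup U_{k-1}$ holds, yielding
\[\rk \Kh_\FF(L_1 \# L_2) = \rk \Kh_\FF(L_1 \# -L_2) = 2^{k-1}\cdot\rk \Kh_\FF(L_1).\]

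Combining these gives the extremal rank identity
\[\rk \Kh_\FF(L_1 \cup L_2) = \rk \Kh_\FF(L_1 \# L_2) + \rk \Kh_\FF(L_1 \# -L_2).\]
Inserted into the skein exact triangle (\ref{eq:disjoint-skein}), this saturates the triangle inequality and forces the connecting map $\beta$ to vanish, exactly as in Lemma \ref{lemma:Kh-disjoint-split}. The triangle then unwinds into the desired short exact sequence, which simultaneously witnesses the surjectivity of $\mu$ and the injectivity of $\sigma$.

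I do not anticipate any serious obstacle. The only input specific to the $\ZZ/2\ZZ$ proof that is avoided is the coefficient-sensitive connected-sum Kunneth formula, and the unlink hypothesis replaces it with an elementary isotopy. All remaining ingredients --- the disjoint-union Kunneth formula and the triangle-inequality unfolding --- are field-independent, so the proof should go through over any $\FF$ with essentially no additional bookkeeping.
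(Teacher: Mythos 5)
Your proof is correct and takes essentially the same approach as the paper: both observe that $L_1 \# U_k \sim L_1 \cup U_{k-1}$ (and likewise with $-L_2$), which makes the connected-sum Kunneth identities of Lemma~\ref{lemma:Kh-Kunneth} hold over any field without invoking Proposition~\ref{prop:basepoint-independence}, and then the rank-saturation argument from Lemma~\ref{lemma:Kh-disjoint-split} carries over unchanged. You have simply spelled out explicitly the \emph{mutatis mutandis} step that the paper leaves implicit.
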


\begin{proof}
If $L_2 = U_k$, then the connected sum formulas in Lemma \ref{lemma:Kh-Kunneth} hold over every field since $L_1 \# U_k \sim L_1 \cup U_{k-1}$.  The arguments from the proof of Lemma \ref{lemma:Kh-disjoint-split} now carry over {\it mutatis mutandis}.
\end{proof}

\subsection{Mutation}
\label{sub:mutation}

Let $(B^3,t_1)$ be an abstract 2--tangle, where $t_1$ is the union of 2 arcs with endpoints on $\del B^3$ and any number of closed components.  Throughout this section, we will use $\cT$ to denote an abstract tangle and $T$ to denote a given diagram for $\cT$ in the unit disk with boundary points on the unit circle.

Suppose that $L$ and $L'$ are mutants.  We say that diagrams for the pair $L,L'$ are in {\it standard form} if they are as in Figure \ref{fig:mutation-standard}.  In particular, $L$ is obtained by connecting diagrams of two tangles $T_1,T_2$ with two bands, one untwisted and the other with a single crossing, while $L'$ is obtained by adding a single crossing to the first band and leaving the second untwisted.  Note that if the mutation is negative, the strands in the two bands are oriented in opposite directions, while if the mutation is positive they strands are oriented in the same direction.

\begin{lemma}
\label{lemma:standard}
If $L'$ is obtained from $L$ by a single mutation, then $L,L'$ admit diagrams in standard form.
\end{lemma}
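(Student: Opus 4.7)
The plan is to realize the mutation at the Conway sphere as a local diagrammatic move that transfers a single crossing from one of two bands to the other. First, I would start from any diagram realizing the mutation pair and isotope $L$ so that the Conway sphere $S$ appears as a round circle in the projection plane and the four points $S\cap L$ sit at the corners of a small square centered on that circle. The interior and exterior of $S$ then project to planar tangle diagrams $T_1$ and $T_2$, each in a half-disk, and together they form a diagram of $L$ via the identification of their boundaries along $S$.

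Second, I would introduce a \emph{canceling pair} of half-twists near $S$: insert a single half-twist into one of the two horizontal strips between the upper and lower pairs of endpoints on $\partial T_2$, and an adjacent inverse half-twist just inside $T_2$. These cancel up to Reidemeister II, so the diagram still represents $L$. Absorbing the inner half-twist into $T_2$ as a redefinition yields a modified tangle $\widetilde{T_2}$, and the resulting diagram now presents $L$ in the shape of $T_1$ and $\widetilde{T_2}$ joined by one untwisted band and one band carrying a single crossing: this is precisely the standard form for $L$.

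Third, I would verify that applying the mutation involution $\tau$ to this diagram produces a diagram of $L'$ in standard form, with the half-twist relocated to the opposite band. The three central involutions of the four-punctured sphere act on the square of endpoints either by swapping the two rows, swapping the two columns, or by a $\pi$-rotation; in each case one can check by direct inspection that the mutation exchanges the roles of the top and bottom bands, up to a rotation of $\widetilde{T_2}$ that can be absorbed into its planar isotopy class. The orientation-matching condition mentioned after the definition of standard form then follows by tracking the strand directions through $\tau$ and separating the positive from the negative case.

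The main obstacle is the case-by-case verification across the three involutions and the sign of the mutation: while each subcase reduces to a routine picture manipulation, one must carefully track the positions of the four endpoints and the orientations of the four strands, since it is exactly this orientation bookkeeping that distinguishes positive from negative mutations and determines which band inherits the crossing in the standard form.
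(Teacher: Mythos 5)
Your setup (make the Conway sphere round with $\cT_1$ projecting into the unit disk and $\cT_2$ outside, then create the band crossing by a Reidemeister II pair near the sphere) is the same as the paper's. The gap is in your third step. You propose to fix one such diagram and then check, case by case over the three central involutions, that ``the mutation exchanges the roles of the top and bottom bands, up to a rotation of $\widetilde{T_2}$.'' This cannot be true as stated: the diagram with the crossing moved to the other band determines a single link, while the three mutations of a fixed decomposition are in general three different links. For instance, for the Kinoshita--Terasaka tangle $T_r$ the mutation about the horizontal axis is trivial (it returns $KT_{r,n}$) while the mutation about the vertical axis gives $C_{r,n}\neq KT_{r,n}$; your uniform claim would force both to be isotopic to the same band-switched diagram. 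The paper avoids this by first normalizing the involution --- ``we may assume the mutation is the rotation about the $y$--axis'' --- using the freedom in identifying the Conway sphere with the standard marked sphere (equivalently, absorbing the other involutions, which extend over the ball, into a re-choice of the tangle diagram), and only then constructs the standard form. If you insist on treating the three involutions separately, you must choose a different diagram (a pre-rotated $T_1$ and a different placement of the canceling crossings) for each case; one fixed choice cannot serve all three.

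There is a second, related gap: the mechanism that turns the mutated diagram into the standard form for $L'$ is a flype, and it requires the canceling pair to straddle the Conway sphere, with one crossing just inside the disk (absorbed into the diagram $T_1$) and one just outside in the band. Then mutation replaces the inner box by its rotation, and the flype through the adjacent band crossing rotates the box back to $T_1$ while transferring the crossing to the other band, which is exactly the standard form for $L'$ with the \emph{same} boxes $T_1,T_2$ --- a requirement of the definition, since the later argument resolves the two band crossings of a common diagram. In your construction both crossings of the canceling pair lie on the $\cT_2$ side (one absorbed into $\widetilde{T_2}$), so after mutation the inner box shows $\tau(T_1)$ rather than $T_1$, and ``a rotation of $\widetilde{T_2}$ absorbed into its planar isotopy class'' cannot repair this: rotating the outer region rel the rest of the diagram is not a planar isotopy, and no move confined to $\widetilde{T_2}$ changes the contents of the inner box. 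You need the flype (or an equivalent rotation of the inner ball dragging the adjacent crossing across), and for that the extra crossing must sit next to the mutated tangle.
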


\begin{proof}
Let $L$ be a link with a Conway sphere bounding the tangle $\cT_1$.  Identify the Conway sphere with the unit sphere in $\RR^3$ so that $\cT_1$ is contained in the unit ball.  We can assume that $L'$ is obtained by mutating $\cT_1$ by the involution around the $y$--axis.  The projection of $\cT_1$ to the $xy$--plane lies in the unit disk.  Isotope the exterior tangle $\cT_2$ so that its projection to the $xy$--plane is outside the unit disk.  Now, introduce a pair of canceling crossings in lower half plane, one just inside the unit disk and one just outside.  Let $T_1,T_2$ denote these diagram of $\cT_1,\cT_2$.  This gives the link on the left of Figure \ref{fig:mutation-standard}.  The corresponding diagram for $L'$ can be obtained from the diagram for $L$ by the mutation and then a flype.
\end{proof}

Take diagrams for $L,L'$ in standard form.  We can obtain 9 links from the tangle diagrams $T_1,T_2$ as follows.  Let $\cL_{\infty,\infty}$ be the link obtained by connecting $T_1$ and $T_2$ by two bands, each with a single crossing.  Each crossing has a 0-- and 1--resolution and we obtain $9$ links $\{\cL_{\bullet,\circ}\}$ for $\bullet,\circ \in \{\infty,0,1\}$ by choosing one of the three forms at each crossing.  See Figure \ref{fig:9-mutation}.

Let $N(T_1),N(T_2)$ denote the numerator closures of the 2-tangles $T_1,T_2$.  Then we have that
\begin{enumerate}
\item $\cL_{\infty,1} = L$ and $\cL_{1,\infty} = L'$
\item $\cL_{0,0} = N(T_1) \cup N(T_2)$
\item $\cL_{1,0}, \cL_{0,1}, \cL_{\infty,0}$, and $\cL_{0,\infty}$ are connected sums of $N(T_1)$ and $N(T_2)$.
\end{enumerate}

\begin{figure}
\centering
\includegraphics[width=.9\textwidth]{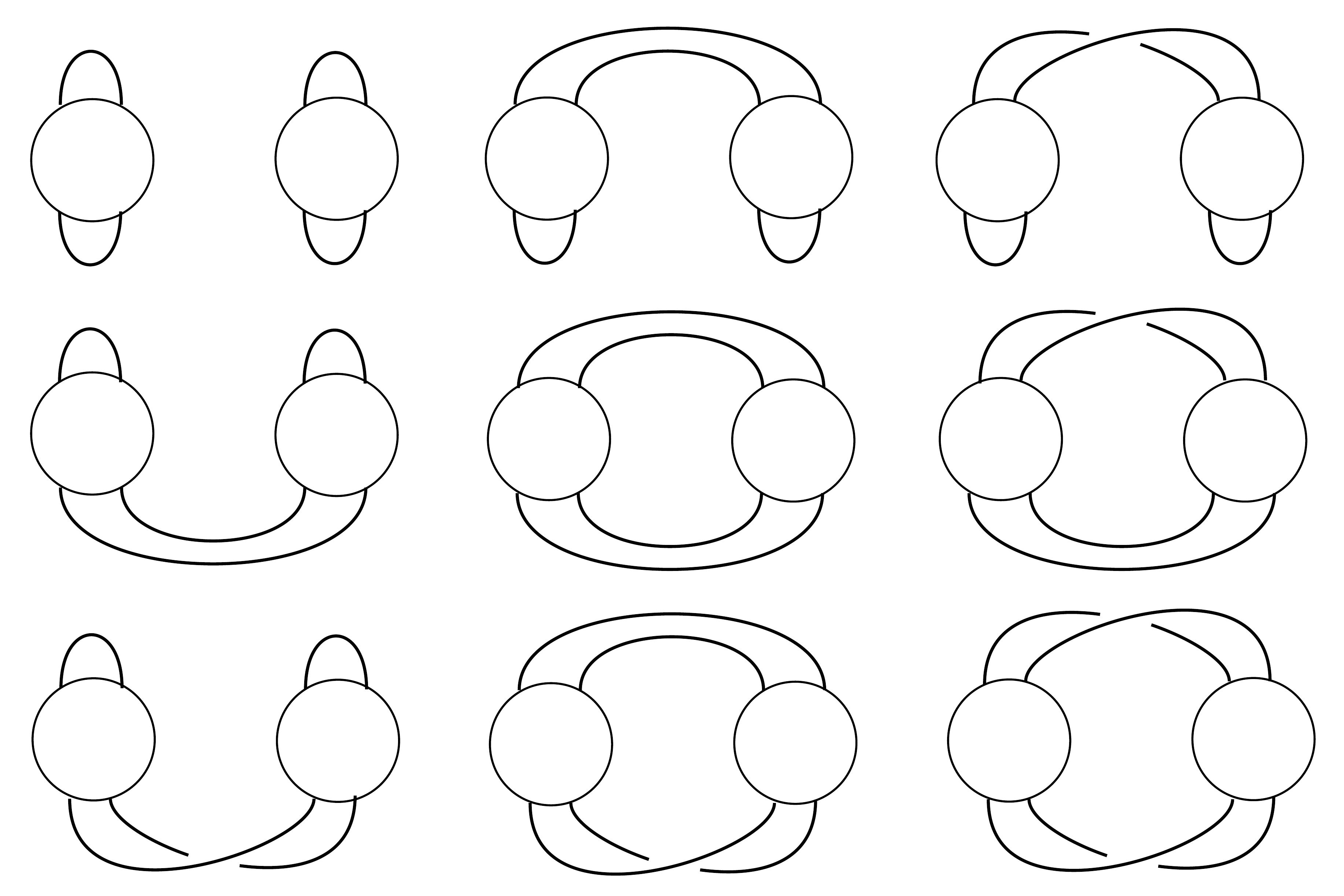}
\caption{The 9 links obtained by resolving the two crossings of $\cL_{\infty,\infty}$ (bottom right).  The links $\cL_{\infty,1}$ and $\cL_{1,\infty}$ are a mutant pair in standard form.}
\label{fig:9-mutation}
\end{figure}


\begin{figure}
\[ \xymatrix{& \ar[d] & &&\ar[d] &&& \ar[d] & \\ 
\ar[r] & \Kh(\cL_{0,0}) \ar[rrr]^{f_{0}} \ar[d]^{k_{0}} &&& \Kh(\cL_{0,1}) \ar[rrr] \ar[d]^{k_{1}} &&& \Kh(\cL_{0,\infty}) \ar[r] \ar[d] & \\
\ar[r] & \Kh(\cL_{1,0}) \ar[rrr]^{f_1} \ar[d] &&& \Kh(\cL_{1,1}) \ar[rrr] \ar[d] &&& \Kh(\cL_{1,\infty}) \ar[r] \ar[d] & \\ 
\ar[r] & \Kh(\cL_{\infty,0}) \ar[rrr] \ar[d] &&& \Kh(\cL_{\infty,1}) \ar[rrr] \ar[d] &&& \Kh(\cL_{\infty,\infty}) \ar[r] \ar[d] & 
\\ & & & & & & & & } \]
\caption{The commutative diagram of skein maps corresponding to the 9 links in Figure \ref{fig:9-mutation}.}
\label{fig:9-commutative-KFr}
\end{figure}
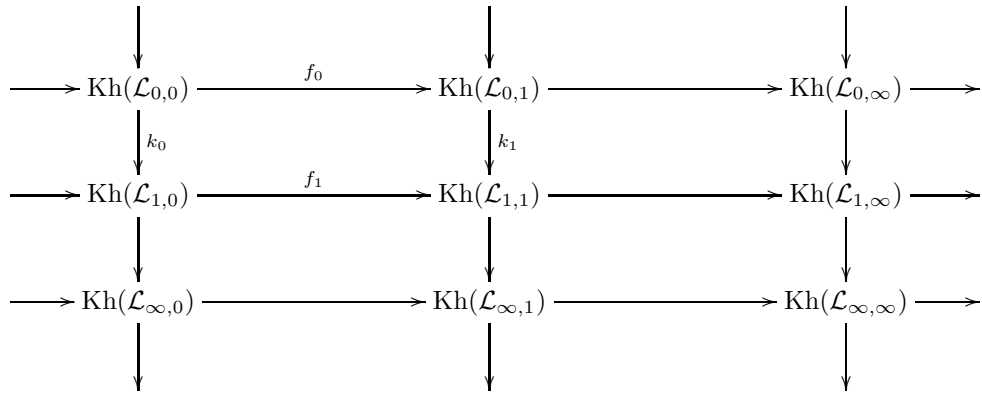

\begin{lemma}
\label{lemma:Q-closure-standard}
Suppose that $L'$ is obtained from $L$ by mutating $\cT_1$ around the $y$--axis.  For any rational closure $C$ of $\cT_1$ corresponding to this mutation, we can choose diagrams for the mutant pair $L,L'$ in standard form so that the numerator closure $N(T_1)$ is $C$.
\end{lemma}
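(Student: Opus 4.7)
The plan is to refine the construction in the proof of Lemma \ref{lemma:standard} so as to prescribe the isotopy type of $N(T_1)$.

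First, I would apply Lemma \ref{lemma:standard} to obtain some standard form diagrams for the pair $L, L'$. Let $C_0 := N(T_1)$ denote the resulting numerator closure, which by construction lies in $\cC_\tau(\cT_1)$. Given a target $C \in \cC_\tau(\cT_1)$, I would write $C = C_0 \cdot R$, where $R$ is a rational tangle whose two arcs pair the four boundary points in the same way as the gluing arcs of the trivial tangle $T_0$ used to realize $C_0$. Any two elements of $\cC_\tau(\cT_1)$ differ by such an $R$: the condition defining $\cC_\tau$ pins down the endpoint pairing for $T_0$, and rational tangles with a fixed endpoint pairing form a $\ZZ$--family generated by a single integer twist.

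Next, I would modify the standard form diagram by inserting $R$ into the interior of $T_1$'s tangle disk, positioned along the portion of its boundary on the Conway sphere, and simultaneously inserting $R^{-1}$ at the matching location in $T_2$'s tangle disk. Because $R \cdot R^{-1}$ is isotopic rel boundary to the trivial tangle, this modification is realized by an ambient isotopy of $L$ (and likewise of $L'$), so the link types are preserved. Since the two bands connecting $T_1$ and $T_2$ are left untouched, the new diagram is still in standard form, and by construction $N(T_1)$ now equals $C_0 \cdot R = C$.

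The main technical point to verify is that the $\ZZ$--family of rational tangles compatible with $\tau$ acts transitively on $\cC_\tau(\cT_1)$ via concatenation, and that the insertion of $R$ and $R^{-1}$ can be performed entirely inside the two tangle disks without interfering with the bands. Both reduce to routine diagrammatic considerations once the pairing pattern dictated by the involution $\tau$ is fixed.
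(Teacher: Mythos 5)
Your overall strategy is the same as the paper's: modify a standard-form diagram by inserting twisting near the Conway sphere into $T_1$ and a cancelling mirror image into $T_2$, so that the numerator closure of the new $T_1$ becomes $C$. But two steps fail as written. First, the claim that rational tangles with a fixed endpoint pairing ``form a $\ZZ$--family generated by a single integer twist'' is false: rational tangles are classified by fractions in $\QQ \cup \{\infty\}$, and the pairing of the four boundary points only records the parities of $p$ and $q$, so the closures in $\cC_{\tau}(\cT_1)$ correspond to an entire parity class of slopes (e.g.\ $[0]$, $[2]$, $[2/3]$, $[4/7]$ all induce the same pairing but do not differ by integer twists). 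Fortunately you do not need transitivity under twisting; all you need is that every $C \in \cC_{\tau}(\cT_1)$ is obtained by closing $T_1$ with \emph{some} rational tangle whose arcs join the $\tau$--exchanged endpoints, which holds essentially by the definition of a rational closure corresponding to $\tau$.

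The more serious gap is the insertion-and-cancellation mechanism. A rational tangle $R$ is a disk tangle consisting of two arcs; it cannot be placed in a collar neighborhood of the Conway sphere, and ``$R \cdot R^{-1}$ is isotopic rel boundary to the trivial tangle'' is not true for tangle composition: rational tangles do not form a group under horizontal or vertical composition (for instance $[1/3]+[-1/3]$ has denominator closure a connected sum of two trefoils, so it is not the trivial tangle). Hence inserting $R$ into $T_1$'s disk and $R^{-1}$ into $T_2$'s disk is not realized by an ambient isotopy of $L$, and it is not even clear what the operation means diagrammatically. What \emph{can} be pushed into a collar of the Conway sphere and cancelled against its mirror is only the twisting part of $R$. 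This is exactly the decomposition the paper's proof uses: write the rational tangle as an annular diagram $A$ (the boundary twisting) together with a crossingless disk diagram; then $T_1' = T_1 \cup A$ is still a diagram of the abstract tangle $\cT_1$ since $A$ merely reparametrizes the boundary sphere, $T_2' = T_2 \cup \overline{A}$ compensates so that $T_1' \cup T_2'$ is still a diagram of $L$ (and, because $\tau$ is central, the pair is still a mutant pair in standard form for $L$ and $L'$), and the correspondence condition on $C$ guarantees that the remaining crossingless part closes $T_1'$ by its numerator closure, so $N(T_1') = C$. Without separating the annular part of $R$ from its trivial core, your step ``the new diagram is still in standard form and $N(T_1) = C$'' does not go through.
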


\begin{proof}
As in the proof of Lemma \ref{lemma:standard}, choose a diagram for $L$ such that the projection of $\cT_1$ lies in the unit disk and the image of $\cT_2$ lies outside.  Let $T_1,T_2$ denote the disk diagrams for the two tangles.  If $C$ is a rational closure of $\cT_1$, then we can choose a diagram for $C$ that is the union of $T_1$ with the diagram of some rational tangle $\left[ \frac{p}{q}\right]$. This rational tangle decomposes into an annular diagram $A$ and a disk diagram $N$, with the latter containing no crossings.  The diagram $T'_1 = T_1 \cup A$ is also a projection of $\cT_1$.  Since $C$ is a rational closure corresponding to the mutation, we can assume that it is the numerator closure of $T'_1$.

Let $\overline{A}$ denote the mirror of $A$, with the convention that the outside boundary component of $A$ becomes the inside component of $\overline{A}$.  Then we obtain composite diagrams $T'_1 = T_1 \cup A$ and $T'_2 = T_2 \cup \overline{A}$ by placing the disk diagrams inside the annular diagrams.  The pair $T'_1,T'_2$ are also diagrams for $\cT_1,\cT_2$ and the union of $T'_1$ and $T'_2$ is also a diagram for $L$.  Now repeat the proof of Lemma \ref{lemma:standard} for this projection of $L$.
\end{proof}

To compute the bigrading shifts in the skein exact triangle, we need to know the relative number of positive and negative crossings among the 9 links.

\begin{lemma}
\label{lemma:pos-neg-x}
Let $n^{\pm}_{\circ,\bullet}$ for $\circ,\bullet \in \{\infty,1,0\}$ denote the number of positive and negative crossings of an oriented link diagram in Figure \ref{fig:9-mutation}.  There exist orientations on the links such that
\[ n_{1,0}^{\pm} = n_{0,1}^{\pm} = n_{0,0}^{\pm}\]
\[n_{1,\infty}^{\pm} = n_{\infty,1}^{\pm}\]
\end{lemma}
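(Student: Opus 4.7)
The plan is to exploit the fact that all nine links in Figure \ref{fig:9-mutation} share exactly the same crossings inside $T_1$ and $T_2$, and differ only in what happens in the band region. Consequently, once the orientations of the four boundary strands of $T_1$ and the four of $T_2$ are fixed, the contributions of the tangles to $n^{\pm}$ are completely determined. The strategy is to choose orientations of the five links involved in the lemma so that they all induce the same orientations on these boundary strands, and then to account for the residual crossings that live in the bands.

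First I would dispose of the identities $n^{\pm}_{0,0} = n^{\pm}_{0,1} = n^{\pm}_{1,0}$. Each of these three diagrams has no band crossings: $\cL_{0,0}$ uses the $0$-smoothing in both bands (giving the disjoint union $N(T_1) \cup N(T_2)$), while $\cL_{0,1}$ and $\cL_{1,0}$ change one of the two smoothings. Starting from an orientation of $\cL_{0,0}$, obtained by independently orienting each component of $N(T_1) \cup N(T_2)$, I read off strand orientations on the tangle boundaries. I then verify that these same boundary orientations extend to orientations on $\cL_{0,1}$ and $\cL_{1,0}$, reversing component orientations as needed in the closed-off or newly joined portions. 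Because the boundary orientations agree, every crossing inside $T_1$ or $T_2$ contributes the same sign across the three diagrams, and there are no other crossings to count.

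Next I would establish $n^{\pm}_{\infty,1} = n^{\pm}_{1,\infty}$. Each of these two links has exactly one band crossing, and the interior crossings inside $T_1$ and $T_2$ again coincide and can be counted identically using the same strand orientations as above. The remaining task is to check that the band crossing of $\cL_{\infty,1}$, which sits in band 1, has the same sign as that of $\cL_{1,\infty}$, which sits in band 2. In the standard form of Figure \ref{fig:mutation-standard}, the two bands are parallel pairs of strands running between $T_1$ and $T_2$, and the two strands within each band are cooriented if the mutation is positive and anti-oriented if it is negative. Since the configuration of strands at the crossing of band 1 is identical to that at the crossing of band 2, their signs agree.

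The main technical obstacle is the extension step in the first identity: verifying that the boundary strand orientations coming from $\cL_{0,0}$ are simultaneously compatible with the $1$-smoothing of band 2 used in $\cL_{0,1}$ and the $1$-smoothing of band 1 used in $\cL_{1,0}$. This reduces to the elementary check that the endpoint-pairing constraints imposed by the $0$- and $1$-smoothings of each band admit a common compatible orientation on the four $T_1$ boundary points (and similarly on $T_2$), together with the observation that reversing orientations of newly-created closed components in each link suffices to realize this compatible choice as a global orientation of each diagram.
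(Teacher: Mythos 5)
Your proposal has a genuine gap in the first identity. The core difficulty is that there may be \emph{no} single boundary orientation on $T_1$ and $T_2$ that is simultaneously compatible with the 0-smoothings of both bands (for $\cL_{0,0}$), the 1-smoothing of band 1 (for $\cL_{1,0}$), and the 1-smoothing of band 2 (for $\cL_{0,1}$). For a concrete failure, suppose the two arcs of $T_2$ connect its band-1 endpoints to its band-2 endpoints (say $b_1 - b_3$ and $b_2 - b_4$, where $b_1,b_2$ belong to band 1 and $b_3,b_4$ to band 2); then once one fixes an orientation of the arcs of $T_2$, the endpoints $b_1$ and $b_3$ have opposite in/out status. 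One can check that compatibility of the band-1 $1$-smoothing forces one choice of $b_1$'s status, while compatibility of the band-2 $1$-smoothing forces the opposite choice of $b_3$'s status; these two requirements are mutually exclusive for this tangle. So the ``elementary check'' you describe as the main obstacle actually fails.

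The paper's proof sidesteps this by not insisting on a single set of boundary orientations. Instead it invokes the key observation you do not state: reversing the orientation of \emph{every} arc and \emph{every} closed component of a single tangle (say $T_2$) leaves that tangle's contribution to $n^{+}$ and $n^{-}$ unchanged, because each crossing in $T_2$ has both its strands reversed. Consequently, one orients $N(T_1) \cup N(T_2)$ once and for all; for $\cL_{1,0}$ and for $\cL_{0,1}$ one separately chooses to keep or globally reverse the orientation on $T_2$ (one of the two options is always compatible with the relevant $1$-smoothing), and the resulting crossing counts agree with those of $\cL_{0,0}$ regardless of which option is taken. Your phrase ``reversing component orientations as needed in the closed-off or newly joined portions'' is too loose here: reversing only \emph{some} components of $N(T_2)$ can change the signs of crossings between a reversed and an unreversed strand, so the invariance of $T_2$'s contribution requires reversing everything inside $T_2$ at once. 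The same issue appears in your treatment of $n^{\pm}_{1,\infty} = n^{\pm}_{\infty,1}$ in the negative-mutation case, where the paper again applies the full-tangle reversal to $T_2$ to align the orientations.
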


\begin{proof}
Reversing the orientation on all components and arcs of a tangle preserves the contribution of that tangle to $n^+$ and $n^-$.  Choose orientations on $N(T_1)$ and $N(T_2)$ and let $t_1^{\pm}$ and $t_2^{\pm}$ be the numbers of positive and negative crossings in the two links.  Then $n^{\pm}_{0,0} = t_1^{\pm} + t_2^{\pm}$.  Moreover, after possibly reversing the orientations on all components of $\cT_2$, this induces orientations on the connected sums of $N(T_1),N(T_2)$ and so $n^{\pm}_{1,0} = n^{\pm}_{0,1} = t_1^{\pm} + t_2^{\pm}$ as well.

If $\cL_{1,\infty}$ is a positive mutant of $\cL_{\infty,1}$, then they admit compatible orientations on the tangles and so the signs of the crossings agree.  If they are negative mutants, then given an orientation on $\cL_{1,\infty}$, we get an orientation on $\cL_{\infty,1}$ by reversing the orientation along all components in $\cT_2$.  This preserves the signs of crossings as well. 
\end{proof}

We can now prove that Khovanov homology is mutation--invariant over $\ZZ/2\ZZ$.

\begin{proof}[Proof of Theorem \ref{thrm:Kh-mutation-invariance}]

According to Lemma \ref{lemma:standard}, we can choose diagrams for $L,L'$ in standard form.   Take the 9 links obtained by the various resolutions of $\cL_{\infty,\infty}$ so that $L = \cL_{1,\infty}$ and $L' = \cL_{\infty,1}$. We obtain the commutative diagram in Figure \ref{fig:9-commutative-KFr}, where each row and column is exact.

Since we are working over a field, exactness implies that there are bigraded isomorphisms
\begin{align*}
\Kh(\cL_{1,\infty}) \cong \Ker (f_1)[-1,0] \oplus \Coker (f_1) \\
\Kh(\cL_{\infty,1}) \cong \Ker (k_1)[-1,0] \oplus \Coker (k_1)
\end{align*}
The maps $k_0$ and $f_0$ are surjective by Lemma \ref{lemma:Kh-disjoint-split} and commutativity implies that $f_1 \circ k_0 = k_1 \circ f_0$.  Thus
\[\Image (f_1) = \Image (f_1 \circ k_0) = \Image (k_1 \circ f_0) = \Image (k_1)\]
and there is a bigraded isomorphism $\Coker(f_1) \cong \Coker(k_1)$.

Finally, by Lemma \ref{lemma:pos-neg-x}, we can assume that the number of negative crossings in $\cL_{1,0}$ and $\cL_{0,1}$ agree as well as the number of negative crossings in $\cL_{1,\infty}$ and $\cL_{\infty,1}$. This implies that the bigrading shifts of $f_1$ and $k_1$ agree.  In addition, there is a bigraded isomorphism between $\Kh(\cL_{1,0})$ and $\Kh(\cL_{0,1})$.  Using the rank--nullity theorem, we can now easily find a bigraded isomorphism between $\Ker(f_1)$ and $\Ker(k_1)$.  The bigraded equivalence between $\Kh(\cL_{1,\infty})$ and $\Kh(\cL_{\infty,1})$ now follows from exactness.
\end{proof}

\begin{proof}[Proof of Theorem \ref{thrm:Kh-QQ-mutation-invariance}]
The proof follows the same argument as Theorem \ref{thrm:Kh-mutation-invariance}, except that we can use Lemma \ref{lemma:Q-closure-standard} to find diagrams for $L,L'$ in standard form so that the numerator closure of $T_1$ satisfies the hypotheses.  Moreover, we use Lemma \ref{lemma:Hopf-QQ-surjective} to guarantee that $f_0$ and $k_0$ are surjective.
\end{proof}

Watson obtained a similar but weaker result when, roughly speaking, both the mutated tangle $T$ and its complementary tangle in $L$ satify the hypothesis of Theorem \ref{thrm:Kh-QQ-mutation-invariance} \cite[Lemma 3.1]{Watson}.

\begin{proof}[Proof of Theorem \ref{thrm:KT-mutation-Khovanov}]
The proof is a directly corollary of Theorem \ref{thrm:Kh-QQ-mutation-invariance} using Lemma \ref{lemma:KT-Q-closure}.  See Subsection \ref{sub:KT}.
\end{proof}

\section{Basepoint maps}
\label{sec:basepoint}

\subsection{Knot Floer homology}
\label{sub:HFK}

A {\it pointed link} $(L,\p)$ is an oriented link $L$ in $S^3$ along with a collection $\p = (p_1,\dots,p_n)$ of basepoints along the link.  The pointed link is {\it nondegenerate} if each component of $L$ has at least one basepoint.  Let $\nu$ denote the {\it successor function} determined by the indexing of the basepoints, defined so that, following the orientation of the link, the basepoint $p_{\nu(i)}$ follows $p_i$.  Let $\SH = (\Sigma, \alphas,\betas,\zs,\ws)$ be a multi--pointed Heegaard diagram for the pointed link $(L,\p)$.  The collections of basepoints $\ws = \{w_i\}$ and $\zs = \{z_i\}$ of the Heegaard diagram are each in one--to--one correspondence with the basepoints $\p$ of the pointed link.  We assume that $z_i$ and $w_i$ lie in the same component of $\Sigma \setminus \alphas$ and $w_i$ and $z_{\nu(i)}$ lie in the same component of $\Sigma \setminus \betas$.  The multicurves $\alphas,\betas$ determine tori $\TT_{\alpha},\TT_{\beta} \subset \text{Sym}^{g + n-1}(\Sigma)$.  

Let $\CFKm(\SH)$ be the free $\FF[U_1,\dots,U_n]$--module generated by the intersection points of $\TT_{\alpha} \cap \TT_{\beta}$.  For each Whitney disk $\phi \in \pi_2(\x,\y)$, let $n_{z_i}(\phi)$ and $n_{w_i}(\phi)$ denote the multiplicity of $\phi$ at the basepoints $z_i$ and $w_i$ and let $n_{\zs}(\phi)$ and $n_{\ws}(\phi)$ denote the total multiplicity at all $z$-- and $w$--basepoints.  The chain group possesses two absolute gradings, the Alexander grading $A(\x)$ and the Maslov grading $M(\x)$.  These gradings satsify
\[M(\x) - M(\y) = \mu(\phi) - 2 n_{\ws}(\phi) \qquad \text{and} \qquad A(\x) - A(\y) = n_{\zs}(\phi) - n_{\ws}(\phi)\]
for all generators $\x,\y$ and any $\phi \in \pi_2(\x,\y)$, where $\mu(\phi)$ denotes the Maslov index of $\phi$.  The complex also possesses a diagonal grading $\delta(\x) \coloneqq M(\x) - A(\x)$.  The formal variables each satisfy $A(U_i) = -1$ and $M(U_i) = -2$.  The differential on the complex $\CFKm(\SH)$ is defined to be
\[ \del^{-}\x \coloneqq \sum_{\y \in \TT_{\alpha} \cap \TT_{\beta}} \sum_{\substack{\phi \in \pi_2(\x,\y) \\ \mu(\phi) = 1 \\ n_{\zs}(\phi) = 0}} \# \widehat{\cM}(\phi) \cdot U_1^{n_{w_1}(\phi)} \cdots U_n^{n_{w_n}(\phi)} \y\]
where $\widehat{\cM}(\phi)$ is the moduli of pseudoholomorphic representatives of $\phi$ for a fixed generic path of almost complex structures, modulo translation by $\RR$.  

For any $i = 1,\dots,n$, define the `hat' or {\it reduced knot Floer homology}
\[ \HFKh(L) \coloneqq H_*(\CFKm(\SH)/U_i = 0, \del^-)\]
It is independent of $i$ and the multi--pointed Heegaard diagram $\SH$ and is an invariant of the underlying link $L$.  The `tilde' or {\it unreduced knot Floer homology} is defined to be
\[ \HFKt(L,\p) \coloneqq H_*(\CFKm(\SH)/U_1 = \cdots = U_n = 0, \del^-)\]
It is independent of the diagram $\SH$ and an invariant of the pointed link $(L,p)$.  If $(L,p)$ is an $l$--component link with $n$ basepoints, it satisfies
\[\HFKt(L,\p) \cong \HFKh(L) \otimes V^{\otimes n - l}\]
where $V$ is a 2--dimensional vector space supported in bigradings $(0,0)$ and $(-1,-1)$.

\subsection{Grid homology}
\label{sub:grid}

Grid homology, introduced in \cite{MOS,MOST}, is one approach to constructing knot Floer homology.  The definitive reference on this material is \cite{OSS-book}.  Given a grid diagram $\GG$ of size $n$ for a link $L$, there is $2n$--pointed Heegaard diagram $\SH_{\GG} \coloneqq (T^2,\alphas,\betas,\XX,\OO)$ in which the counts $\# \widehat{\cM}(\phi)$ can be computed explicitly.  The set of generators $\St(\GG) = \TT_{\alpha} \cap \TT_{\beta}$ can be identified with the symmetric group $S_n$ and there is a one--to--one correspondence between domains which contribute to $\widetilde{\del}$ and empty rectangles $\Rect^0$ on the torus.  Let $\GCt(\GG)$ be the complex generated by $\St(\GG)$ over $\FF$ and with differential given by
\[ \widetilde{\del} \x \coloneqq \sum_{\y \in \St(\GG)} \sum_{\substack{r \in \Rect^0(\x,\y) \\ r \cap (\XX \cup \OO)= \emptyset}} \y\]
We will use $\GHt(\GG)$ when working with grid diagrams, with the understanding that it is isomorphic to the invariant $\HFKt(L,\p)$ of the corresponding pointed link.

\begin{remark}[Bigradings]
Since our explicit computations will rely on grid diagrams, we will follow the grading conventions of \cite{OSS-book}.  Specifically, these  conventions ensure that Maslov gradings of generators are integral.  For an $l$--component link, this differs by an additive shift of $\frac{l-1}{2}$ from the conventions in \cite{OS-HFK,BL-spanning,BLS} and also by multiplication by $(-1)$ from the conventions in \cite{MO-QA,Wong-skein}.  For example, our conventions dictate that for any grid diagram $\GG$ of the 2--component unlink, the grid homology satisfies
\[\GHh_m(\GG,a) \cong \begin{cases}
\FF & \text{if } (m,a) = (0,0),(-1,0) \\
0 & \text{otherwise} \end{cases} \]
The $\delta$--graded groups are defined as usual by
\[\GHt_{\delta}(\GG) \coloneqq \bigoplus_{m - a = \delta} \GHt_m(\GG,a)\]
\end{remark}

Let $L$ be an oriented link with grid diagram $\GG$.  As an ungraded vector space, the unreduced invariant  $\GHt(\GG)$ is independent of the orientations on the components of $L$.  Reversing the orientation on a component induces a well--defined shift on the $\delta$--graded group.  If $L'$ is obtained from $L$ by reversing the orientations on some components of $L$ and $\GG'$ is the corresponding grid diagram, then by \cite[Lemma 10.1.7]{OSS-book}
\[\GHt_{\delta}(\GG') \cong \GHt_{\delta + c }(\GG)\]
where $c = \frac{1}{4}\left( \text{wr}(G') - \text{wr}(G) \right)$ and $G,G'$ are the oriented planar diagrams of $L,L'$ given by the grid diagrams.  Consequently, the shifted group $\GHt(\GG)\left[\frac{1}{4}\text{wr}(G)\right]$ is independent of the link orientation.

\subsection{Basepoint maps}
\label{sub:basepoints}

Let $\SH = (\Sigma, \alphas, \betas, \zs, \ws)$ be a $2n$--pointed Heegaard diagram encoding the $n$--pointed link $(L,\p)$.  The $2n$ basepoints induce endomorphisms on $\HFKt(L,\p)$ and endow it with extra algebraic structure.  In particular, the homology group $\HFKt(L,\p)$ becomes a Clifford module over a Clifford algebra that is determined, up to isomorphism, by the partition of the $n$ basepoints among the $l$ components of $L$.  These maps, but not the Clifford module structure, have been previously considered in \cite{BL-spanning,BVV,Sarkar-basepoints,BLS,Zemke-basepoints}. 

The basepoints determine chain maps on $\CFKt(\SH)$ obtained by counting rigid disks which cross a given basepoint.  
\begin{align*}
Z_i(\x) &\coloneqq \sum_{\y \in \TT_{\alpha} \cap \TT_{\beta}} \sum_{\substack{\phi \in \pi_2(\x,\y) \\ \mu(\phi) = 1 \\ \phi \cap \zs = z_i \\\phi \cap \ws = \emptyset }} \# \widehat{\cM}(\phi) \y & W_i(\x) &\coloneqq \sum_{\y \in \TT_{\alpha} \cap \TT_{\beta}} \sum_{\substack{\phi \in \pi_2(\x,\y) \\ \mu(\phi) = 1 \\ \phi \cap \zs = \emptyset \\\phi \cap \ws = w_i }} \# \widehat{\cM}(\phi) \y
\end{align*}

Note that each $Z_i$ is homogeneous of bidegree $(-1,-1)$ and each $W_i$ is homogeneous of bidegree $(1,1)$.  Let $[x,y] = xy + yx$ denote the anticommutator.  Although the commutator and the anticommutator are equivalent mod 2, this designation will be import for extending this theory over $\ZZ$.  Then standard degeneration arguments prove that the basepoint maps satisfy the following properties.

\begin{lemma}
\label{lemma:basepoint-homotopy}
The basepoint maps $Z_i$ and $W_i$ are chain maps on $\CFKt(\SH)$ for all $i = 1,\dots,n$.  In addition, for all $1 \leq i,j \leq n$ the basepoint maps satisfy the following relations:
\begin{align*}
[Z_i,Z_j] &\sim 0 & Z^2_i \sim W^2_i &\sim 0 \\
[W_i,W_j] &\sim 0  & [Z_i,W_j] &\sim (\delta_{i,j} - \delta_{\nu(i),j})\text{Id}
\end{align*}
\end{lemma}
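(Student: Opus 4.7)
My plan is to apply the standard moduli-space degeneration arguments for Whitney disks in multi-pointed Heegaard Floer homology. For each relation, I consider the Gromov compactification of a one-dimensional moduli space of Whitney disks of Maslov index $2$ with prescribed basepoint multiplicities matching the operators being compared. The ends come in two types: splittings into pairs of Maslov index $1$ disks, which contribute the algebraic combination of basepoint maps on the left-hand side (possibly together with $[\widetilde{\partial}, H]$-corrections from auxiliary higher-multiplicity $\mu=1$ disks), and $\alpha$- or $\beta$-boundary degenerations, which contribute copies of the identity on $\CFKt(\SH)$ by the Ozsv{\'a}th--Szab{\'o} classification. Since the total count of ends of a compact one-dimensional moduli space vanishes modulo $2$, this yields the stated chain-homotopy relations.

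For the chain-map statements $[\widetilde{\partial}, Z_i] = 0$ and $[\widetilde{\partial}, W_i] = 0$, and for the pure relations $[Z_i,Z_j] \sim 0$, $[W_i,W_j] \sim 0$, $Z_i^2 \sim 0$, and $W_i^2 \sim 0$, the multiplicity constraints rule out all boundary degenerations. Indeed, under the pairing conventions of the diagram --- each $\alpha$-region meeting a basepoint contains the pair $\{z_i, w_i\}$, and each $\beta$-region meeting a basepoint contains the pair $\{w_i, z_{\nu(i)}\}$ --- any degeneration would require equal nonzero multiplicity at both the $z$- and the $w$-basepoint of its region, which is incompatible with the constraints in these cases. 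Hence only broken-disk ends contribute: for the chain-map statements the relation holds on the nose, and for the pure relations the homotopy $H$ enumerates the auxiliary $\mu=1$ higher-multiplicity disks.

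The essential case is the mixed relation $[Z_i, W_j] \sim (\delta_{i,j} - \delta_{\nu(i),j})\,\text{Id}$. Here I consider $\mu=2$ disks with $n_{z_i}(\phi) = n_{w_j}(\phi) = 1$ and all other multiplicities zero. Split ends contribute $Z_i W_j + W_j Z_i$ modulo the usual $[\widetilde{\partial}, \cdot]$-correction. Boundary degenerations can now occur: an $\alpha$-boundary degeneration is admissible exactly when $z_i$ and $w_j$ share an $\alpha$-region, which by the pairing convention forces $i=j$; a $\beta$-boundary degeneration is admissible exactly when they share a $\beta$-region, which by the convention $w_k \sim_\beta z_{\nu(k)}$ picks out precisely a unique pair relating $i$ and $j$ via $\nu$. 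Each admissible degeneration contributes one copy of the identity map, producing the two Kronecker-delta terms (their opposite signs being immaterial over $\FF_2$).

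The main obstacle will be the combinatorial bookkeeping for the mixed relation: correctly matching the successor function $\nu$ with the $\alpha$/$\beta$-pairings of basepoints to obtain the indexing $\delta_{i,j} - \delta_{\nu(i),j}$ as stated, and verifying via the standard Ozsv{\'a}th--Szab{\'o} analysis of low-area boundary degenerations that each admissible moduli space consists of a single rigid representative descending to the identity on $\CFKt(\SH)$. Once these points are in place, the four pure relations and the mixed relation follow uniformly from the same bookkeeping template.
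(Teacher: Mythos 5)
Your proposal correctly supplies the standard degeneration argument that the paper invokes only by name (``standard degeneration arguments prove\ldots'') with a pointer to the earlier literature, so the two are essentially the same in approach. Your decomposition of cases is right: for the pure relations $[Z_i,Z_j]\sim 0$, $Z_i^2\sim W_i^2\sim 0$, $[W_i,W_j]\sim 0$ and the chain-map statements, no $\alpha$- or $\beta$-boundary degeneration can satisfy the multiplicity constraints (a degeneration always carries nonzero multiplicity at one $z$- \emph{and} one $w$-basepoint in the covered region), so only two-story breakings contribute and the statement is either exact (chain maps) or holds up to the homotopy counting $\mu=1$ disks of the prescribed higher multiplicity. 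Your analysis of the mixed relation is also structurally correct: the identity terms arise precisely from $\alpha$- and $\beta$-boundary degenerations through regions simultaneously meeting $z_i$ and $w_j$, whose mod-$2$ count is $1$ by Ozsv\'ath--Szab\'o. One small remark on the bookkeeping you flagged: with the diagram convention stated in the paper ($z_k, w_k$ share an $\alpha$-region; $w_k, z_{\nu(k)}$ share a $\beta$-region), the $\alpha$-degenerations force $i=j$ and the $\beta$-degenerations force $i=\nu(j)$, giving $\delta_{i,j}-\delta_{i,\nu(j)}$ rather than the $\delta_{i,j}-\delta_{\nu(i),j}$ printed in the lemma; these coincide only when $\nu$ is an involution. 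This appears to be an index-direction slip in the paper rather than an error in your argument, and it does not affect anything downstream since the algebra $\Omega_{\nu}$ is invariant under replacing $\nu$ by $\nu^{-1}$.
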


As a result, the basepoints determine differentials\footnote{In \cite{BL-spanning,BVV,BLS}, the symbols $\Psi_{w},\Psi_z$ and $\psi_w,\psi_z$ are used to denote the basepoint maps on the chain level and homology, respectively.}
\[z_i : \HFKt(\SH) \rightarrow \HFKt(\SH) \qquad w_i : \HFKt(\SH) \rightarrow \HFKt(\SH)\]
on the unreduced knot Floer homology for all $i = 1,\dots,n$.  

\subsection{Basepoint Clifford algebra $\Omega_{\nu}$}

Let $\Lambda_n$ be the exterior algebra of $\FF^n$.  Let $\Omega^Z_n$ be a copy of $\Lambda_n$ with the following bigrading.  Assign every element $z \in \Lambda^1(\FF^n)$ the bigrading $(-1,-1)$ and extend this bigrading multiplicatively.   Let $\Omega^W_n$ be a copy of $\Lambda(\FF^n)$ with the following bigrading.  Assign every element $w \in \Lambda^1(\FF^n)$ the bigrading $(1,1)$ and extend this bigrading multiplicatively.  

For a successor function $\nu: \{1,\dots,n\} \rightarrow \{1,\dots,n\}$, let $\Omega_{\nu}$ denote the extension of $\Omega^Z_n$ by $\Omega^W_n$ determined by the commutation relations in Lemma \ref{lemma:basepoint-homotopy}.  Specifically, let $\Omega_{\nu} = \FF[z_{1},\dots,z_{n},w_{1},\dots,w_{n}]$ modulo the relations
\begin{align*}
[z_i,z_j] &= 0 & [z_i,w_j] &= \delta_{\nu(i),j} - \delta_{i,j} \\
[w_i,w_j] &= 0 & z_i^2 &= w_i^2 = 0
\end{align*}
Extend the bigradings on $\Omega^Z_n$ and $\Omega^W_n$ to $\Omega_{\nu}$.  

Up to isomorphism, the algebra $\Omega_{\nu}$ only depends on the partition of $n$ determined by the cycles of $\nu$.  If $\phi: \{1,\dots,n\} \rightarrow \{1,\dots,n\}$, we can define a new successor function $\nu_{\phi} \coloneqq \phi^{-1} \circ \nu \circ \phi$ by conjugation.  The algebra $\Omega_{\nu_{\phi}}$ is obtained from $\Omega_{\nu}$ by relabeling its elements.  If $\nu$ and $\nu'$ induce the same partition of $n$, then we can clearly find a $\phi$ such that $\nu' = \nu_{\phi}$.

\begin{lemma}
\label{lemma:omega-clifford}
The basepoint algebra $\Omega_{\nu}$ for a successor function is the Clifford algebra $Cl(\FF^{2n},Q_{\nu})$ for a quadratic form $Q_{\nu}$ determined by $\nu$.
\end{lemma}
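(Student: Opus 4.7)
The plan is to exhibit the quadratic form $Q_\nu$ explicitly and then verify that the defining relations of $\Omega_\nu$ match precisely the Clifford relations. Let $V = \FF^{2n}$ with basis $\{z_1,\dots,z_n,w_1,\dots,w_n\}$ identified with the degree-one generators of $\Omega_\nu$. Define a symmetric bilinear form $B_\nu : V \times V \to \FF$ by setting $B_\nu(z_i,z_j) = B_\nu(w_i,w_j) = 0$ and $B_\nu(z_i,w_j) = \delta_{\nu(i),j} - \delta_{i,j}$ on basis elements, and extending bilinearly. Define a quadratic form $Q_\nu : V \to \FF$ by
\[
Q_\nu\!\left(\sum_i a_i z_i + \sum_j b_j w_j\right) = \sum_i a_i b_{\nu(i)} - \sum_i a_i b_i,
\]
and check the axioms $Q_\nu(\lambda v) = \lambda^2 Q_\nu(v)$ and $(u,v) \mapsto Q_\nu(u+v) - Q_\nu(u) - Q_\nu(v) = B_\nu(u,v)$. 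This explicit formula is chosen so the construction works uniformly over any field, including $\FF_2$, where the classical recipe $Q_\nu(v) = \tfrac12 B_\nu(v,v)$ is unavailable.

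Next, I would verify directly from Lemma~\ref{lemma:basepoint-homotopy} that $v^2 = Q_\nu(v) \cdot 1$ in $\Omega_\nu$ for every $v \in V$. Expanding the square $\left(\sum_i a_i z_i + \sum_j b_j w_j\right)^2$, the terms $z_i^2$ and $w_j^2$ vanish, the pure $z$ and pure $w$ cross terms collapse via $[z_i,z_j] = [w_i,w_j] = 0$, and the mixed terms contribute $\sum_{i,j} a_i b_j (z_i w_j + w_j z_i) = \sum_{i,j} a_i b_j (\delta_{\nu(i),j} - \delta_{i,j}) = Q_\nu(v)$. By the universal property of the Clifford algebra, this identity yields a surjective algebra homomorphism $\Phi : Cl(V, Q_\nu) \to \Omega_\nu$ sending each generator to itself.

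Conversely, in $Cl(V, Q_\nu)$ the polarization identity $uv + vu = B_\nu(u,v) \cdot 1$ applied to the basis pairs $(z_i,z_j)$, $(w_i,w_j)$, and $(z_i,w_j)$ returns exactly the relations $[z_i,z_j] = [w_i,w_j] = 0$, $z_i^2 = w_j^2 = 0$, and $[z_i,w_j] = \delta_{\nu(i),j} - \delta_{i,j}$ that define $\Omega_\nu$. This gives a surjective homomorphism $\Psi : \Omega_\nu \to Cl(V, Q_\nu)$ in the opposite direction. Since $\Phi$ and $\Psi$ are both the identity on the degree-one generators and both algebras are generated by those, $\Phi$ and $\Psi$ are mutually inverse isomorphisms. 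A dimension count confirms this: the anticommutation relations show $\Omega_\nu$ is spanned by the $2^{2n}$ ordered monomials $z_{i_1} \cdots z_{i_k} w_{j_1} \cdots w_{j_l}$ with $i_1 < \cdots < i_k$ and $j_1 < \cdots < j_l$, while $\dim Cl(V, Q_\nu) = 2^{2n}$.

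The verification is essentially mechanical, so I do not expect a serious obstacle; the one technical point to handle with care is the characteristic-two case, where $Q_\nu$ must be pinned down by its values on a basis together with its polarization rather than by symmetrizing $B_\nu$. This is also what makes the Clifford-algebra formulation — rather than just a symmetric bilinear form — the natural algebraic structure to record.
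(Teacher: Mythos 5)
Your proof is correct and follows essentially the same route as the paper: exhibit the quadratic form $Q_{\nu}$ explicitly and verify from the relations of Lemma \ref{lemma:basepoint-homotopy} that $v^2 = Q_{\nu}(v)\cdot 1$ for all degree-one $v$, which is exactly the paper's computation. The only difference is that you spell out what the paper leaves implicit --- the two surjections from the universal property and the polarization identity, checked to be mutually inverse on generators --- which is a welcome but not essentially new refinement.
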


\begin{proof}
Fix a basis $\{z_1,w_1,\dots,z_n,w_n\}$ for $\FF^{2n}$ and let $\{x_i,y_i\}$ denote the dual basis.  Now, define $\Q_{\nu} \in \FF[x_1,y_1,\dots,x_n,y_n]$ by
\[Q_v \coloneqq \sum_{i = 1,\dots,n} (x_i y_i - x_{\nu(i)} y_i)\]
Choose some elements $v_1,\dots,v_k \in \{z_1,w_1,\dots,z_n,w_n\}$ and let $v = v_1 + \dots v_k$ denote their linear combination in $\FF\langle z_1,w_1,\dots,z_n,w_n \rangle $ $\subset \Omega_{\nu}$.  Squaring $v$, we obtain
\[v^2 = \sum_{i=1}^k v_i^2 + \sum_{1 \leq i < j \leq k} [v_i,v_j]\]
If we view $Q_v$ as a quadratic form on $\FF\langle z_1,w_1,\dots,z_n,w_n \rangle$, then it is clear from the anticommutation relations that $v^2 = Q_{\nu}(v) \text{Id}$, which is the defining relation for a Clifford algebra.
\end{proof}

  Let $\Theta \coloneqq \Lambda(\FF^2) = Cl(\FF^2,0)$, which we view as a Clifford algebra for the trivial quadratic form on $\FF^2$, and let $\Omega_1$ denote the Clifford algebra $Cl(\FF^2,Q_1)$ where $Q_1(x,y) = xy$.  By abuse of notation, we let $w,z$ denote algebra generators of both $\Theta$ and $\Omega_1$.

\begin{proposition}[Structure of $\Omega_{\nu}$]
\label{prop:omega-pi-structure}
Let $\nu$ be a successor function of length $n$ with $l$ cycles.  There is a noncanonical $\FF$--algebra isomorphism 
\[\Omega_{\nu} \cong \Omega_1^{\otimes n-l} \otimes \Theta^{\otimes l}\]
\end{proposition}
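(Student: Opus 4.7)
The plan is to construct the isomorphism explicitly: first reduce to the case of a single cycle of $\nu$, then exhibit distinguished generators realizing the decomposition $\Omega_1^{\otimes k-1} \otimes \Theta$ within that cycle.

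Since $\nu$ is a permutation (the cyclic successor among basepoints on each link component), its orbits partition $\{1, \dots, n\}$ into cycles $C_1, \dots, C_l$. Let $\nu_j := \nu|_{C_j}$ and let $\Omega_{\nu_j}$ denote the subalgebra of $\Omega_\nu$ generated by $\{z_i, w_i : i \in C_j\}$. The defining relations of $\Omega_\nu$ restricted to generators indexed by a single cycle are precisely those of $\Omega_{\nu_j}$, and between generators indexed by distinct cycles we have $[z_i, w_j] = \delta_{\nu(i), j} - \delta_{i,j} = 0$ (since $\nu$ preserves cycles), together with the trivial $[z_i, z_j] = [w_i, w_j] = 0$. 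Hence the multiplication map $\bigotimes_j \Omega_{\nu_j} \to \Omega_\nu$ is a well-defined surjective algebra homomorphism, and dimension matching ($\prod_j 4^{|C_j|} = 4^n$) promotes it to an isomorphism.

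It therefore suffices to prove $\Omega_{\mathrm{cyc}} \cong \Omega_1^{\otimes k-1} \otimes \Theta$ for a single $k$-cycle, which without loss of generality may be taken to be $\nu(i) = i+1 \pmod{k}$. Set
\[ Z := z_1 + \cdots + z_k, \qquad W := w_1 + \cdots + w_k, \qquad \tilde{w}_j := w_1 + \cdots + w_j \text{ for } 1 \leq j \leq k-1. \]
A direct computation using $[z_i, w_j] = \delta_{i+1, j} - \delta_{i, j}$ shows that $Z$ and $W$ are central (e.g., $[Z, w_j] = \sum_i(\delta_{i+1, j} - \delta_{i, j}) = 0$), that $Z^2 = W^2 = [Z, W] = 0$, and that $[z_i, \tilde{w}_j] = -\delta_{i, j}$ for $1 \leq i, j \leq k-1$, with $[z_i, z_j] = [\tilde{w}_i, \tilde{w}_j] = 0$ and $z_i^2 = \tilde{w}_j^2 = 0$. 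The subalgebra $\langle Z, W \rangle \cong \Theta$ is therefore central, and $\langle z_i, \tilde{w}_i : 1 \leq i \leq k-1 \rangle$ satisfies the defining relations of $\Omega_1^{\otimes k-1}$. This produces an algebra map $\Omega_1^{\otimes k-1} \otimes \Theta \to \Omega_{\mathrm{cyc}}$; since $w_j = \tilde{w}_j - \tilde{w}_{j-1}$ (with $\tilde{w}_0 = 0$), $w_k = W - \tilde{w}_{k-1}$, and $z_k = Z - z_1 - \cdots - z_{k-1}$, every original generator lies in its image, so the map is surjective, and dimension matching $4^{k-1} \cdot 4 = 4^k$ makes it an isomorphism. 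Assembling over cycles yields $\Omega_\nu \cong \bigotimes_j (\Omega_1^{\otimes k_j - 1} \otimes \Theta) \cong \Omega_1^{\otimes n - l} \otimes \Theta^{\otimes l}$.

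The main technical step is the case-by-case verification of $[z_i, \tilde{w}_j] = -\delta_{i, j}$ for $i, j < k$, where the cyclic indexing requires careful bookkeeping: the contributions from $\delta_{i+1, r}$ and $\delta_{i, r}$ must cancel when $i < j$, produce $-1$ when $i = j$, and both vanish when $i > j$. A secondary subtlety is whether the tensor product should be interpreted as super or ordinary; this is resolved at the $\FF$-algebra level asserted in the proposition by noting that $\langle Z, W \rangle$ is genuinely central, so the $\Theta$-factors from distinct cycles commute with all $\Omega_1$-factors, while within the $\Omega_1^{\otimes k-1}$ part the identification $\Omega_1 \cong M_2(\FF)$ renders the distinction moot.
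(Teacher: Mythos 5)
Your proof is correct, but it takes a genuinely different route from the paper's.

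The paper proves the proposition by induction on $n-l$: after identifying $\Omega_\nu$ with a Clifford algebra $Cl(\FF^{2n},Q_\nu)$ via Lemma~\ref{lemma:omega-clifford}, it picks a single $i$ with $\nu(i)\neq i$, exhibits an orthogonal decomposition $Q_\nu = Q_1\oplus Q_{\nu'}$ for a shorter successor function $\nu'$, and peels off one $\Omega_1$ factor at a time. You instead first split $\Omega_\nu$ along the cycles of $\nu$ (using that all cross-cycle brackets vanish plus a dimension count), and then for a single $k$-cycle you give a closed-form change of generators --- $Z,W$ spanning a central $\Theta$ factor and the pairs $(z_i,\tilde w_i)$ for $i<k$ spanning $\Omega_1^{\otimes k-1}$ --- realizing the whole decomposition at once. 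Both arguments ultimately rest on the $\dim\Omega_\nu = 4^n$ fact supplied by the Clifford identification, but your proof is more explicit: it names the generators of each tensor factor rather than producing them one at a time through a recursive basis change, and the central elements $Z=\sum z_i$, $W=\sum w_i$ you construct are precisely the $\zeta_i,\omega_i$ that the paper introduces afterward when it characterizes the center of $\Omega_\nu$ via the shift map $\Sigma$. So your version front-loads an observation the paper arrives at later, at the small cost of a case analysis for the relation $[z_i,\tilde w_j]=\delta_{i,j}$ (note that over $\FF=\ZZ/2\ZZ$ the sign $-\delta_{i,j}$ you write is the same as $\delta_{i,j}$, matching the defining relation of $\Omega_1$).
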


\begin{proof}
To prove the decomposition, we induct on $n - l$.

If $n - l = 0$, then $\nu(i) = i$ for all $i = 1,\dots,n$ and $Q_{\nu} = 0$.  This form has a trivial orthogonal decomposition into $n$ copies of the trivial form on $\FF^2$.  It is well--known that if a quadratic form $Q$ on $\FF^k$ has an orthogonal decomposition into $Q_1 \oplus Q_2$, where $Q_1,Q_2$ are quadratic forms on $\FF^{k_1},\FF^{k_2}$, then $Cl(\FF^k,Q)$ is the $\ZZ_2$--graded tensor product of $Cl(\FF^{k_1},Q_1)$ and $Cl(\FF^{k_2},Q_2)$.  We are working in characteristic 2 so all elements are even.  Thus
\[Cl(\FF^{2n},Q_{\nu}) = Cl(\FF^2,0) \otimes \cdots \otimes Cl(\FF^2,0) = \Theta^{\otimes n}\]

Now suppose $n - l > 0$.  We will find a successor function $\nu'$ of length $n-1$ such that
\[\Omega_{\nu} \cong \Omega_{\nu'} \otimes \Omega_1\]
Since $n - l > 0$, we can find some $i$ such that $\nu(i) \neq i$.  Without loss of generality, we can assume that $i = n-1$ and $\nu(n-1) = n$.  Define a new successor function $\nu'$ of length $n-1$ by
\[\nu'(i) \coloneqq \begin{cases}
\nu(i) & \text{if }i < n-1 \\
\nu(n) & \text{if }i = n-1
\end{cases}\]

In the bases from Lemma \ref{lemma:omega-clifford} on $\FF^{2n}$ and its dual, we can choose decompositions $V \oplus W$  and  $V^* \oplus W^*$ where
\begin{align*}
V &\coloneqq \FF \langle z_1,w_1,\dots,z_{n-2},w_{n-2},z_{n-1} + z_n,w_{n-1} + w_n \rangle & W &\coloneqq \FF \langle z_n,w_{n-1} \rangle \\
V^* &\coloneqq \FF \langle x_1,y_1,\dots,x_{n-2},y_{n-2},x_{n-1} + x_n,y_n \rangle & W^* &\coloneqq \FF \langle x_{n},y_{n-1} + y_n \rangle
\end{align*}
The elements of $W^*$ vanish on $V$ and the elements of $V^*$ vanish on $W$.  Let $X_i,Y_i$ denote the new dual basis elements.  Specifically,
\[X_{n-1} = x_{n-1} + x_n \qquad Y_n = y_{n-1} + y_n\]
and $X_i = x_i$ and $Y_i = y_i$ otherwise.  A straightforward computation shows that there is an orthogonal decomposition
\[Q_{\nu} = X_nY_n + \sum_{i=1}^{n-1} X_iY_i - X_{\nu'(i)} Y_i  = Q_1 \oplus Q_{\nu'}\]
Consequently, $ \Omega_{\nu} \cong \Omega_1 \otimes \Omega_{\nu'}$ and by induction $\Omega_{\nu} = \Omega_1 \otimes \Omega_1^{\otimes n- l - 1} \otimes \Theta^{\otimes l}$.
\end{proof}

While the decomposition of $\Omega_{\nu}$ from Proposition \ref{prop:omega-pi-structure} is noncanonical, the center of $\Omega_{\nu}$ determines a canonical copy of $\Theta^{\otimes l}$.  The successor function $\nu$ induces a shift map $\Sigma: \Omega_{\nu} \rightarrow \Omega_{\nu}$ defined by
\[\Sigma(w_i) \coloneqq w_{\nu(i)} \qquad \Sigma(z_i) \coloneqq z_{\nu(i)}\]
For each $i$, let $c_i$ be the minimum positive integer such that $\nu^{c_i}(i) = i$.  Every element of the form
\[\zeta_i \coloneqq z_i + z_{\nu(i)} + \dots + z_{\nu^{c_i-1}(i)} \qquad \omega_i \coloneqq w_i + w_{\nu(i)} + \dots w_{\nu^{c_i-1}(i)}\]
satisfies $\Sigma(v) = v$.  In fact, it is easy to see that
\[\FF \langle \zeta_1,\omega_1,\dots,\zeta_n,\omega_n \rangle = \left\{ v \in \FF^{2n} : \Sigma(v)  = v \right\}\]
This subalgebra is isomorphic to $\Theta^{\otimes l}$.

\begin{lemma}
Let $\nu$ be a successor function of length $n$.  The center of $\Omega_{\nu}$ is exactly the elements $v \in \Omega_{\nu}$ satisfying
\[\Sigma(v) = v\]
\end{lemma}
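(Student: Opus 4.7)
The plan is to prove both containments separately, leveraging the structural decomposition from Proposition~\ref{prop:omega-pi-structure}.

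First I would verify that $\Sigma$ extends uniquely to an $\FF$-algebra automorphism of $\Omega_{\nu}$. The only defining relation whose $\Sigma$-invariance is nontrivial is $[z_i, w_j] = \delta_{\nu(i), j} - \delta_{i, j}$; applying $\Sigma$ yields $[z_{\nu(i)}, w_{\nu(j)}] = \delta_{\nu^2(i), \nu(j)} - \delta_{\nu(i), \nu(j)}$, which collapses to $\delta_{\nu(i), j} - \delta_{i, j}$ since $\nu$ is a bijection. The remaining relations are trivially preserved, so $\Sigma$ is an algebra automorphism and its fixed set $\{v : \Sigma(v) = v\}$ is automatically an $\FF$-subalgebra of $\Omega_{\nu}$.

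For $Z(\Omega_{\nu}) \subseteq \{v : \Sigma(v) = v\}$, I would invoke Proposition~\ref{prop:omega-pi-structure} to write $\Omega_{\nu} \cong \Omega_1^{\otimes(n-l)} \otimes \Theta^{\otimes l}$. Using $Z(A \otimes B) = Z(A) \otimes Z(B)$ for $\FF$-algebras, together with $Z(\Theta) = \Theta$ (commutativity) and $Z(\Omega_1) = \FF \cdot 1$ (a direct computation: a generic $v = a + bz + cw + dzw \in \Omega_1$ satisfies $[v,z] = c + dz$ and $[v,w] = b + dw$ using $[w,z] = 1$, $[zw,z] = z$, $[zw,w] = w$, so centrality forces $b = c = d = 0$), we obtain $Z(\Omega_{\nu}) \cong \Theta^{\otimes l}$ of dimension $4^l$. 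Tracing through the isomorphism of Proposition~\ref{prop:omega-pi-structure} identifies $Z(\Omega_{\nu})$ with the subalgebra $\FF\langle \zeta_1, \omega_1, \ldots, \zeta_n, \omega_n \rangle$. Each generator is manifestly $\Sigma$-fixed by the computation preceding the lemma, and since $\Sigma$ is an algebra homomorphism the entire subalgebra lies in the fixed set.

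For the reverse containment $\{v : \Sigma(v) = v\} \subseteq Z(\Omega_{\nu})$, I would argue that any $\Sigma$-fixed element lies in the subalgebra $\FF\langle \zeta_1, \omega_1, \ldots, \zeta_n, \omega_n \rangle$ identified above with the center. The strategy is to analyze the $\Sigma$-action on a suitable PBW-type basis of $\Omega_{\nu}$ compatible with the tensor decomposition $\Omega_1^{\otimes(n-l)} \otimes \Theta^{\otimes l}$, and to leverage the explicit identification of the degree-1 $\Sigma$-fixed subspace as $\FF\langle \zeta_i, \omega_i \rangle$ already recorded in the text. The main obstacle is that $\Sigma$ does not preserve the tensor factorization: under the noncanonical isomorphism $\Sigma$ acts trivially on $\Theta^{\otimes l}$ but shifts the $\Omega_1^{\otimes(n-l)}$ generators in a way that mixes them with $\Theta^{\otimes l}$. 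Carefully tracking this mixing and showing that any $\Sigma$-fixed element must project trivially to the $\Omega_1^{\otimes(n-l)}$ factor is the key technical step; once this is established, the fixed subalgebra is forced to coincide with $\Theta^{\otimes l} \cong Z(\Omega_{\nu})$, completing the proof.
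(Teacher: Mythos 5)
The reverse containment in your proposal is never actually established---you name it as ``the key technical step'' and sketch a plan (track how $\Sigma$ mixes the tensor factors, then show a $\Sigma$-fixed element projects trivially to $\Omega_1^{\otimes(n-l)}$), but you do not carry it out. Worse, as you have interpreted the statement this containment is \emph{false}: the $\Sigma$-fixed subalgebra of $\Omega_\nu$ is strictly larger than the center. Take $n=2$, $\nu=(12)$, so $l=1$. Then $z_1z_2$ satisfies $\Sigma(z_1z_2)=z_2z_1=z_1z_2$ (since $[z_1,z_2]=0$), yet $[z_1z_2,w_1]=z_1+z_2\neq 0$. In this example one can count: $\dim_{\FF}\Omega_\nu=16$, the $\Sigma$-fixed subspace has dimension $10$, but $\dim Z(\Omega_\nu)=\dim\Theta=4$. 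No amount of ``tracking the mixing'' will repair this.

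The resolution is that the lemma should be read---as the text immediately preceding it and the paper's own proof make clear---as a statement about \emph{degree-one} elements, i.e.\ about $v\in\FF^{2n}\subset\Omega_\nu$ (the linear span of $z_i,w_i$). Under that reading the paper gives a short, direct argument with none of the heavy structural machinery you invoke: a linear vector $v$ commutes with $z_i$ iff $y_{\nu^{-1}(i)}(v)=y_i(v)$ and with $w_i$ iff $x_i(v)=x_{\nu(i)}(v)$, which is precisely the condition $\Sigma(v)=v$ read off the dual basis. Your forward containment via $Z(\Omega_1)=\FF\cdot 1$, $Z(\Theta)=\Theta$ and Proposition~\ref{prop:omega-pi-structure} is correct but unnecessary for this weaker (and correct) statement, and your framing of the reverse containment reveals that you took the statement at face value for all of $\Omega_\nu$ rather than noticing the implicit restriction to the linear subspace. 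You should state explicitly that you are working in $\FF^{2n}$ and then the direct computation with the commutation relations (as in the paper) closes the argument in a few lines.
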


\begin{proof}
Let $z_1,w_1,\dots,z_n,w_n$ be the basis of $\FF^{2n}$ from Lemma \ref{lemma:omega-clifford} and $x_1,y_1,\dots,x_n,y_n$ the dual basis.  The commutation relations of $\Omega_{\nu}$ can equivalently be described on the basis as follows.  An arbitrary vector $v$ commutes with $z_i$ if and only if $y_{\nu^{-1}(i)}(v) = y_{i}(v)$ and commutes with $w_i$ if and only if $x_i(v) = x_{\nu(i)}(v)$.  The shift map induces a shift map $\Sigma^*$ on the dual space that satisfies $\Sigma^*x_{\nu(i)} = x_{i}$ and $\Sigma^*y_{\nu(i)} = y_{i}$.  Consequently, if $\Sigma(v) = v$ then $x_i(v) = x_{\nu(i)}(v)$ and $y_i(v) = y_{\nu(i)}(v)$ for all $i$.  Conversely, if $\Sigma(v) \neq v$ then for some $i$ either $x_i(v) \neq x_{\nu(i)}(v)$ or $y_i(v) \neq y_{\nu(i)}(v)$.  From the above discussion, it is now clear that $v$ is central if and only if $\Sigma(v) = v$.
\end{proof}

We conclude this subsection with a key fact about $\Omega_1$--modules.

\begin{lemma}
\label{lemma:omega-split}
Let $M$ be an $\Omega_1$--module.  The maps $wz$ and $zw$ are orthogonal projections that determine a direct sum decomposition
\[M  = wz  M  \oplus zw  M \]
Moreover, the following submodules are equal
\begin{align*}
wzM &= wM = \text{ker}(w) & zwM &= zM = \text{ker}(z) 
\end{align*}
and multiplication by $z$ and $w$ induce isomorphisms
\[z: wz  M  \rightarrow zw  M  \qquad w: zw  M  \rightarrow wz  M \]
\end{lemma}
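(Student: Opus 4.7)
The plan is to exploit the defining relation of $\Omega_1$ that follows from $Q_1(x,y)=xy$ in Lemma \ref{lemma:omega-clifford}. First I would translate that quadratic form into the algebra relations on the generators $z,w$: expanding $(\alpha z + \beta w)^2 = Q_1(\alpha z + \beta w) \cdot \text{Id} = \alpha\beta \cdot \text{Id}$ for arbitrary $\alpha,\beta \in \FF$ forces $z^2 = w^2 = 0$ together with the crucial identity
\[ zw + wz = 1. \]
Everything else is a formal consequence of this single relation (in characteristic $2$).

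Next I would verify that $e_1 := wz$ and $e_2 := zw$ are complementary orthogonal idempotents. Using $zw = 1 + wz$, a two-line computation gives $(wz)^2 = w(1+wz)z = wz + w^2 z^2 = wz$, and similarly $(zw)^2 = zw$. Their product $wz \cdot zw = w \cdot z^2 \cdot w = 0$ vanishes (as does $zw\cdot wz$), and their sum is $1$. This immediately yields the direct sum decomposition $M = e_1 M \oplus e_2 M = wzM \oplus zwM$.

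Then I would establish the three equalities $wzM = wM = \ker(w)$ (with the analogous chain for $z$ by symmetry). The inclusion $wzM \subseteq wM$ is trivial, and the reverse comes from writing $wm = (wz+zw)(wm) = wz(wm) + zw^2 m = wz(wm)$. The inclusion $wzM \subseteq \ker(w)$ follows from $w \cdot wz = w^2 z = 0$. Conversely, if $wm = 0$, then applying $1 = wz + zw$ gives $m = wzm + zwm = wzm + z(wm) = wzm \in wzM$.

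Finally, multiplication by $z$ clearly sends $wzM$ into $zwM$ (since $z \cdot wzm = zwzm = zw(zm)$), and multiplication by $w$ sends $zwM$ into $wzM$; to check these are mutually inverse, I would just note $w \circ z$ acts on $wzM$ as $(wz)^2 = wz$, which is the identity on $wzM$ because $wz$ fixes $wzm$ by the idempotent property, and likewise $z\circ w$ is the identity on $zwM$. I do not expect any genuine obstacle here; the whole statement is a structural consequence of the relation $zw+wz=1$, and the only care needed is to work consistently in characteristic $2$ so that no sign conventions intrude.
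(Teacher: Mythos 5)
Your proposal is correct and follows essentially the same route as the paper: verify that $wz$ and $zw$ are complementary orthogonal idempotents using $z^2=w^2=0$ and $zw+wz=1$, deduce the splitting, establish the chain of inclusions identifying $wzM$, $wM$, and $\ker(w)$ (and symmetrically for $z$), and obtain the isomorphisms from idempotence. The only difference is your preliminary derivation of the relation $zw+wz=1$ from the quadratic form $Q_1$, which the paper takes as given from the definition of $\Omega_1$; this is harmless.
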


\begin{proof}
The maps $wz$ and $zw$ are idempotent since
\[wzwz = wz(1 + zw) = wz \qquad zwzw = zw(1 + wz) = zw\]
and are orthogonal since $(wz)(zw) = (zw)(wz) = 0$.  The relation $wz + zw = 1$ implies that the submodules $wzM$ and $zwM$ span $M$.  This gives the direct sum decomposition.

Secondly, since $w^2 = 0$, this implies that $w  M \subset \text{ker}(w)$.  Conversely, if $w \x = 0$ then
\[ \x = (z w + wz) \x  = w z \x\]
and so 
\[ w  M \subset \text{ker}(w) \subset w z  M \subset w  M\]
Thus the three submodules are equal.  Identical arguments prove the corresponding statements for multiplication by $z$.

Finally, the isomorphisms follow from the idempotence of $wz$ and $zw$.
\end{proof}

\begin{remark}
\label{rem:rep-theory-omega-1}
We can give another interpretation of Lemma \ref{lemma:omega-split} in terms of the representation theory of $\Omega_1$.  The $\FF$--algebra $\Omega_1$ is isomorphic to the algebra $Mat(2,\FF)$ consisting of $2 \times 2$ matrices over $\FF$.  An isomorphism is given by
\begin{align*}
\label{eq:matrix-isom}
1 &\mapsto \begin{bmatrix} 1 & 0 \\ 0 & 1 \end{bmatrix} &  w &\mapsto \begin{bmatrix} 0 & 1 \\ 0 & 0 \end{bmatrix} \\
z &\mapsto \begin{bmatrix} 0 & 0 \\ 1 & 0 \end{bmatrix} &  wz &\mapsto \begin{bmatrix} 1 & 0 \\ 0 & 0 \end{bmatrix}
\end{align*}
The algebra $\Omega_1$ has a unique irreducible representation $V = \FF\langle \x,\y \rangle$ where $z\x = \y$ and $w\y = \x$.  Thus if $M$ is a finite-dimensional $\Omega_1$--module, it splits as a direct sum of several copies of $V$.  The top--degree elements in each copy of $V$ span the principal submodule $wzM$ and likewise the bottom--degree elements span $zwM$.
\end{remark}

\subsection{Clifford module structure of $\HFKt(L,\p)$}

An immediately corollary of Lemma \ref{lemma:basepoint-homotopy} is that the homology group $\HFKt(\SH)$ has the structure of a left $\Omega_{\nu}$--module. Moreover, if $(L,\p)$ is a nondegenerate, pointed, $l$--component link with a single basepoint on each component, then $\HFKt(\SH) \cong \HFKh(L)$ is a left $\Theta^{\otimes l}$--module.

Any pair of $2n$--pointed Heegaard diagrams $\SH,\SH'$ for $(L,\p)$ are related by a sequence of index 1/2 stabilizations, isotopies and handleslides.  These moves induce isomorphisms on homology and the basepoint maps commute with these isomorphisms.

\begin{proposition}
Suppose that $\SH$ and $\SH'$ are related by an index 1/2 stabilization, isotopy, or handlslide in the complement of $\ws \cup \zs$.  Then the induced isomorphism
\[\phi: \HFKt(\SH) \rightarrow \HFKt(\SH')\]
is $\Omega_{\nu}$-linear
\end{proposition}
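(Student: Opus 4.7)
The plan is to lift the statement to the chain level and exhibit an explicit chain homotopy between $\Phi \circ Z_i$ and $Z_i \circ \Phi$, and between $\Phi \circ W_i$ and $W_i \circ \Phi$, for every elementary Heegaard move $\SH \rightsquigarrow \SH'$ implemented by a chain map $\Phi$. Passing to homology then gives $\phi \circ z_i = z_i \circ \phi$ and $\phi \circ w_i = w_i \circ \phi$, and since these generate $\Omega_\nu$, this yields $\Omega_\nu$-linearity.

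The substantive case is a handleslide carried out in the complement of $\ws \cup \zs$. Here $\Phi$ counts pseudoholomorphic triangles $\psi$ of Maslov index $0$ with $n_{\zs}(\psi) = n_{\ws}(\psi) = 0$, via a choice of small triangle-class in a diagram that agrees with the originals outside a region disjoint from $\ws \cup \zs$. The candidate chain homotopy $H_i : \CFKt(\SH) \to \CFKt(\SH')$ is defined by counting rigid triangles $\psi$ of Maslov index $-1$ with $n_{z_i}(\psi) = 1$ and zero multiplicity at every other basepoint. The identity
\[ \Phi \circ Z_i + Z_i \circ \Phi = \partial H_i + H_i \partial \]
then follows from identifying the ends of the one-dimensional moduli space of triangles of Maslov index $0$ with $n_{z_i}(\psi) = 1$ and no other basepoint multiplicities. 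Gromov compactness gives exactly three kinds of ends: a disk contributing to $Z_i$ breaking off one incoming edge glued to a triangle contributing to $\Phi$ (yielding $Z_i \circ \Phi$), the symmetric configuration on the other incoming edge (yielding $\Phi \circ Z_i$), and a triangle contributing to $H_i$ glued to a boundary differential (yielding $\partial H_i + H_i \partial$). The argument for $W_i$ is identical with $w_i$ in place of $z_i$.

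For an index $1/2$ stabilization, $\Phi$ is a tensor-product isomorphism with a two-generator model supported near a point disjoint from $\ws \cup \zs$; the basepoint actions act trivially on the model factor, so $\Phi$ commutes strictly with every $Z_i$ and $W_i$. An isotopy disjoint from the basepoints is handled by the same moduli-space argument as the handleslide, with triangle counts replaced by the continuation-map $\RR$-parametrized disk counts.

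The main technical obstacle is verifying the enumeration of ends in the handleslide case, i.e.\ ruling out degenerations in which the multiplicity at $z_i$ sits on a bubble that is not of the form $Z_i$. Boundary degenerations into a triangle plus a disk with both $n_{z_i} = 1$ and additional $\ws$-multiplicity, or into multi-story broken trajectories redistributing the $z_i$-multiplicity, must be excluded by a standard dimension count combined with the fact that each $Z_j, W_j$ is itself defined using disks with multiplicity at a single basepoint. That the handleslide region is disjoint from $\ws \cup \zs$ is precisely what forces these parasitic configurations to have codimension strictly greater than one and hence not appear as ends of the one-dimensional moduli.
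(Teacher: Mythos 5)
Your overall scheme is exactly the one the paper relies on: the paper does not reprove this statement but cites \cite{BL-spanning} (Proposition 3.6), and that proof is the chain-level argument you describe --- commute each $Z_i,W_i$ with the map induced by the Heegaard move up to a homotopy defined by counting the same objects with multiplicity one at the chosen basepoint, and extract the relation from the ends of a one-dimensional moduli space. So the approach is correct and coincides with the source the paper points to.

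Two bookkeeping corrections, though. For triangle maps there is no $\RR$-translation to quotient by, so rigid triangles have Maslov index $0$ regardless of whether they cross $z_i$: your homotopy $H_i$ should count index-$0$ triangles with $n_{z_i}(\psi)=1$ (not index $-1$), and the broken ends you enumerate are those of the \emph{index-$1$} moduli with $n_{z_i}=1$ (not index $0$, which is zero-dimensional); as written, the index-$(-1)$ spaces are generically empty and the identity you display would be vacuous. Second, an index $1/2$ stabilization introduces a single new intersection point, so $\Phi$ is the one-generator identification $\x \mapsto \x \times c$; the two-generator local tensor factor you invoke belongs to an index $0/3$ stabilization, which adds basepoints and is a different move (treated separately in Proposition \ref{prop:03-stab-basepoints}). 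Finally, the exclusion of ends at the $\Theta$-corner should rest on the facts that $\Theta$ is a cycle and that a disk in the $\beta\beta'$-diagram carrying the $z_i$-multiplicity also picks up forbidden basepoint multiplicities, rather than on a bare codimension count; with those fixes the argument goes through as in \cite{BL-spanning}.
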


\begin{proof}
Proved in \cite[Proposition 3.6]{BL-spanning}.
\end{proof}

Thus, the Clifford module structure is invariant up to isomorphism.  We will denote this isomorphism class by $\HFKt(L,\p)$.

Let $V$ denote a 2-dimensional bigraded vector space supported in bigradings $(0,0)$ and $(-1,-1)$.  Then if $(L,\p)$ has $n$ basepoints and $l$ components there is a noncanonical, $\FF$-linear isomorphism
\begin{equation}
\label{eq:tilde-decomp}
\HFKt(L,\p) \cong \HFKh(L) \otimes V^{\otimes n - l}
\end{equation}

The Clifford module structure gives more control over this decomposition.  In particular, a fixed decomposition $\Omega_{\nu} \cong \Theta^{ l} \otimes \Omega_1^{\otimes n- l}$, which is guaranteed by Proposition \ref{prop:omega-pi-structure}, induces a unique decomposition as in Equation \ref{eq:tilde-decomp}.  For the sake of notation, set $k = n - l$.  Since $\Omega_{\nu} \cong \Omega_1^{k} \otimes \Theta^l$, we can choose a basis $\omega_1,\zeta_1,\dots,\omega_n,\zeta_n$ for $ \Omega_{\nu}$ such that
\[[\omega_i,\omega_j] = [\zeta_i,\zeta_j] = 0 \text{ for } 1 \leq i,j \leq n
\qquad [ \omega_i,\zeta_j ] = \begin{cases}
\delta_{i,j} & \text{if } i \leq k \text{ and } j \leq k \\
0 & \text{otherwise}
\end{cases}\]
As a result, we obtain a family of $2k$ orthogonal projections $\{\omega_i\zeta_i, \zeta_i\omega_i\}$ by Lemma \ref{lemma:omega-split}.  Define  $H \coloneqq \omega_1 \zeta_1 \cdots \omega_k \zeta_k \HFKt(\SH)$.  It follows from Lemma \ref{lemma:omega-split} that there is a bigraded isomorphism
\[\HFKh(L) \cong H\]
For each $I = (i_1,\dots,i_j)$ with $1 \leq j \leq k $ and $1 \leq i_1 < i_2 < \dots < i_j \leq k$, the subspace
\[H_I \coloneqq \zeta_{i_1} \cdots \zeta_{i_j} H\]
is isomorphic to $\HFKh(L)[j,j]$.  For any two $I,I'$, the subspaces $H_I$ and $H_{I'}$ are isomorphic as (ungraded) $\Theta^{l}$--modules.



\subsection{Geometric stabilization}

The module $\HFKt(L,\p)$ is an invariant of the nondegenerate pointed link $(L,\p)$ but not the underlying link $L$ itself.  However, the Clifford module structure transforms in a well--defined way when adding or subtracting basepoints from $\p$.

Let $(L,\p')$ be a nondegenerate pointed link.  If $p$ is some point on $L$ disjoint from $\p'$ there is an $\FF$--linear isomorphism
\begin{equation}
\label{eq:tilde-stabilization}
 \HFKt(L,\p' \cup p) \cong \HFKt(L,\p') \otimes V
\end{equation}
where $V$ is a 2--dimensional vector space supported in bigradings $(0,0)$ and $(-1,-1)$.  Let $\nu'$ be the successor function of $(L,\p')$ and $\nu$ the successor function of $(L,\p' \cup p)$.  We can view $V$ as an $\Omega_1$--module (Remark \ref{rem:rep-theory-omega-1}) and therefore $\HFKt(L,\p') \otimes V$ as an $\Omega_{\nu'} \otimes \Omega_{1}$--module.  According to Proposition \ref{prop:omega-pi-structure}, we can identify the algebras $\Omega_{\nu}$ and $\Omega_{\nu'} \otimes \Omega_1$.  In fact, we can boost the isomorphism of Equation \ref{eq:tilde-stabilization} to be $\Omega_{\nu}$--linear.

\begin{proposition}
\label{prop:tilde-stabilization}
Let $(L,\p')$ be a nondegenerate pointed link with $n-1$ basepoints and successor function $\nu'$.  Let $(L,\p' \cup p)$ be the pointed link obtained by adding a single basepoint $p$, with successor function $\nu$.  Then there is an identification
\[\Omega_{\nu} \cong \Omega_{\nu'} \otimes \Omega_1\]
and an $\Omega_{\nu}$--module isomorphism
\[\HFKt(L,\p' \cup p) \cong \HFKt(L,\p') \otimes V\]
that commutes with this identification.
\end{proposition}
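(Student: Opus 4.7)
The strategy is to realize the addition of the basepoint $p$ as a geometric stabilization of Heegaard diagrams and to verify that the resulting chain-level decomposition of $\CFKt$ matches the algebraic basis change underlying $\Omega_\nu \cong \Omega_{\nu'} \otimes \Omega_1$ used in the induction step of Proposition \ref{prop:omega-pi-structure}.

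I would begin by choosing a multipointed Heegaard diagram $\SH'$ for $(L, \p')$. After reindexing, I may assume the new basepoint is $p = p_n$, inserted on the arc of $L$ from $p_i$ to $p_{\nu'(i)}$, so that $\nu(i) = n$, $\nu(n) = \nu'(i)$, and $\nu(j) = \nu'(j)$ otherwise. A diagram $\SH$ for $(L, \p' \cup p)$ is obtained by performing an index-0/3 Heegaard stabilization in a neighborhood of the $\alpha$-arc from $w_i$ to $z_{\nu'(i)}$: introduce a new pair of $\alpha$- and $\beta$-circles meeting in exactly two points $\x^+, \x^-$, and place the new Heegaard basepoints $z_n, w_n$ in the appropriate regions of the stabilization domain so that the cyclic order along $L$ is correct.

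The standard neck-stretching argument for geometric stabilization then produces a chain-level isomorphism
\[
\CFKt(\SH) \cong \CFKt(\SH') \otimes V,
\]
where $V = \FF\langle \x^+, \x^-\rangle$ sits in bigradings $(0,0)$ and $(-1,-1)$. The same degeneration analysis identifies each basepoint map under this isomorphism: for $j \notin \{i,n\}$, the maps $Z_j$ and $W_j$ act as their counterparts on $\CFKt(\SH')$ tensored with the identity on $V$; the ``local'' maps $W_i$ and $Z_n$, whose Heegaard basepoints both lie inside the small stabilization region, act on the $V$ factor as the standard irreducible $\Omega_1$-representation described in Remark \ref{rem:rep-theory-omega-1}, tensored with the identity on $\CFKt(\SH')$; and the remaining two maps $Z_i$ and $W_n$ pick up cross-terms between the two factors. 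Next, I would invoke the basis change from the inductive step of Proposition \ref{prop:omega-pi-structure}: set $\tilde{z}_i := z_i + z_n$ and $\tilde{w}_n := w_n + w_i$, keep $z_j, w_j$ for $j \notin \{i, n\}$, and take $\{z_n, w_i\}$ as the $\Omega_1$ generators; this yields the required algebra isomorphism $\Omega_\nu \cong \Omega_{\nu'} \otimes \Omega_1$. A direct computation using Lemma \ref{lemma:basepoint-homotopy} shows that under this identification the cross-terms cancel exactly: the combined maps $\tilde{Z}_i = Z_i + Z_n$ and $\tilde{W}_n = W_n + W_i$ act as $z_i \otimes \mr{Id}$ and $w_i \otimes \mr{Id}$ on $\HFKt(\SH') \otimes V$, which is precisely the $\Omega_{\nu'}$-action inherited from the previous subsection, while the $\Omega_1$ factor acts purely on $V$.

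The principal obstacle lies in controlling how the basepoint maps transform under the stabilization isomorphism. A priori, a holomorphic disk on $\SH$ contributing to $Z_i$ may have a domain that passes through the stabilization region, thereby mixing the two tensor factors. However, after stretching the neck in the stabilization region and applying the standard degeneration arguments for multipointed diagrams (as in the basepoint analyses of \cite{Sarkar-basepoints,Zemke-basepoints}), all contributing disks factor as a pair consisting of a disk on $\SH'$ and a small local disk in the stabilization region. The algebraic basis change then absorbs the remaining cross-term contributions into the combinations $\tilde{z}_i$ and $\tilde{w}_n$, which is exactly why those particular combinations arise in the orthogonal decomposition of the Clifford form $Q_\nu$ used in Proposition \ref{prop:omega-pi-structure}.
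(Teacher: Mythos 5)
Your strategy coincides with the paper's: perform an index-$0/3$ stabilization near the new basepoint, invoke the chain-level behavior of the basepoint maps under this stabilization, and then apply the same basis change ($\tilde{z} = z_i + z_n$, $\tilde{w} = w_i + w_n$, $\Omega_1 = \langle z_n, w_i\rangle$, after re-indexing) that appears in the inductive step of Proposition~\ref{prop:omega-pi-structure}. The decomposition $\CFKt(\SH)\cong C_x\oplus C_y$, the identification of which basepoint maps preserve this splitting, and the choice of $\Omega_1$-generators all match.

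However, there is a genuine gap in the middle of your argument. You claim that ``a direct computation using Lemma~\ref{lemma:basepoint-homotopy}'' verifies that the cross-terms cancel and that the combinations $\tilde{Z}_i$, $\tilde{W}_n$ act as $z_i\otimes\mr{Id}$, $w_i\otimes\mr{Id}$. Lemma~\ref{lemma:basepoint-homotopy} only gives the \emph{algebraic} commutation relations among basepoint maps on a fixed diagram; it says nothing about how those maps transform under the stabilization bijections $\psi_x,\psi_y$, which is the heart of the matter. What is actually needed is a precise count of how holomorphic disks on $\SH$ correspond to disks on $\SH'$ (together with periodic-domain contributions and the small bigons/annuli in the stabilization region) --- this is the content of Proposition~\ref{prop:03-stab-basepoints}, which in turn rests on the holomorphic-disk analysis of \cite[Proposition~6.5]{OS-HFL}, not on \cite{Sarkar-basepoints,Zemke-basepoints}. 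Moreover, you assert that the two ``local'' maps $W_i$ and $Z_n$ both act as the standard $\Omega_1$-representation on the $V$-factor tensored with the identity. That analysis establishes this cleanly only for $Z_n$ (which annihilates $C_y$ and maps $C_x$ to $C_y$), while $W_i$ alone is not isolated --- only the sum $W_i + W_n$ is shown to be $\Omega_{\nu'}$-equivariant; $W_i$ by itself may also carry cross-terms from large domains through the stabilization region.

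Because of this, the concluding step of your proof is also under-justified. Once one has the correct chain-level data, the passage to an $\Omega_\nu$-module isomorphism on homology is not simply ``the tensor decomposition is already there at the chain level.'' The paper proves it by showing that $w_i$ induces an isomorphism $H_*(C_y)\to H_*(C_x)$ and, for dimension reasons, that $H_*(C_x) = \Image(w_iz_n)$; it then verifies $z_nw_i\cdot H_*(C_y) = z_nw_i\cdot\HFKt(\SH)$ using the injectivity of $1+w_iz_n$ on $H_*(C_y)$. These idempotent and injectivity arguments replace the direct tensor-product identification that you posit. Your plan is salvageable, but you must (a) import the stabilization analysis of \cite[Proposition~6.5]{OS-HFL} rather than Lemma~\ref{lemma:basepoint-homotopy}, and (b) be careful not to assume $W_i$ is purely local; the isomorphism should instead be built from the orthogonal projections $w_iz_n$, $z_nw_i$ as above.
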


Adding a basepoint $p$ can be achieved on a multi--pointed Heegaard diagram $\SH$ for $(L,\p)$ by an index 0/3 stabilization.  To prove the isomorphism, we need to determine how the differential and basepoint chain maps change under such a stabilization.  

Let $\SH' = (\Sigma_g,\alphas,\betas,\zs,ws)$ be a $(2n-2)$--pointed Heegaard diagram for $L$.  In a neighborhood of $w_{n-1}$ add two new basepoints $w_n,z_n$ and two new curves $\alpha_{n-g},\beta_{n-g}$ such that $\alpha_{n-g}$ bounds a disk in $\Sigma_g$ containing $z_n$ and $w_n$ and disjoint from all other basepoints and $\beta$ curves and such that $\beta_{n-g}$ bounds a disk in $\Sigma_g$ containing $w_{n-1}$ and $z_{n}$ and no other basepoints.  We can assume that $\alpha_{n-g} \pitchfork \beta_{n-g} = \{x,y\}$, labeled so that the bigon with boundary on $\alpha_{n-g} \cup \beta_{n-g}$ and corners at $x,y$ is oriented from $x$ to $y$.  The diagram $\SH = (\Sigma_g, \alphas \cup \alpha_{n-g}, \betas \cup \beta_{n-g},\zs \cup z_n, \ws \cup w_n)$ is now a $2n$-pointed Heegaard diagram for $L$.  We say that $\SH$ is an {\it index 0/3 stabilization} of $\SH'$.

Let $C_x$ and $C_y$ denote the submodules of $\CFKt(\SH)$ spanned by generators with vertices at $x$ and $y$, respectively.  There are obvious identifications as vector spaces
\[\CFKt(\SH') \cong C_x \qquad \CFKt(\SH') \cong C_y \qquad \CFKt(\SH) \cong C_x \oplus C_y\]
Moreover, it can be shown that $C_x$ and $C_y$ are subcomplexes of $\CFKt(\SH)$ and that
\begin{align}
\label{eq:HFKt-stabilize}
\HFKt(\SH') \cong H_*(C_x) \cong H_*(C_y)[-1,-1] \qquad \HFKt(\SH) \cong \HFKt(\SH') \oplus \HFKt(\SH')[1,1]
\end{align}
where $[i,j]$ denotes shifting the bigrading \cite[Proposition 2.3]{MOS}.

\begin{proposition}
\label{prop:03-stab-basepoints}
Suppose that $\SH$ is obtained from $\SH'$ by an index 0/3 stabilization at $z_1$.  Let $W_i,Z_i$ denote the basepoint maps on $\CFKt(\SH)$ and let $W'_i,Z'_i$ denote the basepoint maps on $\CFKt(\SH')$.  For some choice of almost-complex structure, there are chain complex bijections
\[\psi_x: \CFKt(\SH') \rightarrow C_x\qquad \psi_y: \CFKt(\SH') \rightarrow C_y \]
such that the chain maps and basepoint maps satisfy the following relations
\begin{align*}
Z_i \psi_x &= \psi_x Z'_i & \text{for } i&=1,\dots,n-2  & W_i \psi_x &= \psi_x W'_i & \text{for }i &= 1,\dots,n-2 \\
Z_i \psi_y &= \psi_y Z'_i & \text{for } i&= 1,\dots,n-2 & W_i \psi_y &= \psi_y W'_i & \text{for } i&= 1,\dots,n-2\\
Z_{n-1} \psi_x &= \psi_y + \psi_x Z'_{n-1} &&& (W_{n-1} + W_{n}) \psi_x &= \psi_x W'_{n-1} \\
Z_n \psi_x &= \psi_y & & &  (W_{n-1} + W_{n}) \psi_y &= \psi_y W'_{n-1} \\
Z_{n} \psi_y = Z_{n-1} \psi_y&=0&
\end{align*}
\end{proposition}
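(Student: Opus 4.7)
The plan is to adapt the classical index $0/3$ stabilization-invariance argument for Heegaard Floer homology, tracking the basepoint chain maps throughout. First I would fix an almost complex structure on $\Sigma$ that is sufficiently neck-stretched along a small curve encircling the stabilization region. A standard neck-stretching and gluing argument (in the style of \cite{OS-HFK,MOS}) then gives a bijective correspondence between rigid pseudo-holomorphic disks in $\SH$ and pairs consisting of a rigid disk in $\SH'$ together with a rigid local disk in the stabilization region.

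Next I would enumerate the local rigid disks. The curves $\alpha_{n-g}$ and $\beta_{n-g}$ together cut the stabilization region into three locally-bounded complementary pieces: the lens $D_\alpha \cap D_\beta$ containing only $z_n$, and the two crescents $D_\alpha \setminus D_\beta$ and $D_\beta \setminus D_\alpha$ containing only $w_n$ and only $w_{n-1}$ respectively. Each is an embedded Maslov-index-$1$ bigon: the lens runs from $x$ to $y$, while both crescents run from $y$ to $x$, and these are the only rigid local disks. With this in hand I would define $\psi_x(\x) \coloneqq \x \otimes x$ and $\psi_y(\x) \coloneqq \x \otimes y$, giving $\FF$-linear bijections onto $C_x$ and $C_y$. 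Because $\widetilde{\del}$ counts only disks disjoint from all basepoints, and each of the three local bigons crosses a basepoint, no rigid disk contributing to $\widetilde{\del}$ has a nontrivial local part; thus $C_x$ and $C_y$ are subcomplexes and $\psi_x,\psi_y$ intertwine the differentials.

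The basepoint relations follow by repeating the enumeration, selecting disks with multiplicity one at the specified basepoint and zero elsewhere. For $i \le n-2$ the relevant basepoint lies away from the stabilization region, forcing the local part to be trivial, and the claim reduces to $Z'_i$ or $W'_i$ acting on $\SH'$. For $Z_n$ the only compatible local part is the lens bigon $x\to y$, yielding $Z_n\psi_x = \psi_y$, while no rigid disk from $y$ crosses only $z_n$, so $Z_n\psi_y = 0$. The relation $(W_{n-1}+W_n)\psi_x = \psi_x W'_{n-1}$ follows from the observation that under the correspondence a single $\SH'$-disk counted by $W'_{n-1}$ lifts to $\SH$ in two compatible ways, one using the $w_{n-1}$-crescent and one using the $w_n$-crescent as its local part; both return to the $x$-label, and summing recovers $W'_{n-1}$. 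The same argument handles $\psi_y$.

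The subtle case is $Z_{n-1}$. The contribution $\psi_x Z'_{n-1}$ to $Z_{n-1}\psi_x$ comes from rigid disks with trivial local part, lifting exactly from $Z'_{n-1}$-disks in $\SH'$. The exceptional $\psi_y$ contribution arises from composite disks whose local part is the lens bigon $x\to y$, paired with an $\SH'$-disk whose multiplicities at the region containing $w_{n-1}$ and at the region containing $z_{n-1}$ combine under the stabilization correspondence to the $\SH$-profile of $Z_{n-1}$; a combinatorial identification shows this exceptional family is in bijection with the generators of $\CFKt(\SH')$ themselves, contributing precisely $\psi_y$. The vanishing $Z_{n-1}\psi_y = 0$ holds because any composite disk starting at $y$ would require a local part running $y \to x$, but both crescents pass through a $w$-basepoint, which is incompatible with the $Z_{n-1}$ profile. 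The main obstacle is justifying the bijection for this exceptional family: producing the $\SH'$-disks that lift to the composite $\SH$-disks and verifying the multiplicity bookkeeping across the three sub-regions of the old region containing $w_{n-1}$. This is formally analogous to the rectangle-count in the grid-diagram version of $0/3$ stabilization, and the neck-stretching correspondence provides the structural input needed to carry the analogous argument through in the general Heegaard setting.
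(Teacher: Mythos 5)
Your local model (the lens through $z_n$ running from $x$ to $y$, the two crescents through $w_n$ and $w_{n-1}$ running from $y$ to $x$, and the identifications $\psi_x,\psi_y$) matches the paper's setup, but there is a genuine gap at exactly the step you yourself flag as the main obstacle, and the mechanism you propose for it cannot work. A disk counted by $Z_{n-1}$ must have multiplicity $1$ at $z_{n-1}$ and $0$ at every other basepoint, in particular at $z_n$; since the lens contains $z_n$, no configuration whose local part is the lens bigon can contribute to $Z_{n-1}$. The exceptional $\psi_y$ term in $Z_{n-1}\psi_x$ comes instead from a non-product, annulus-type domain --- for instance the component of $\Sigma\setminus\alphas$ containing $z_{n-1}$ and $w_{n-1}$ minus the crescent through $w_{n-1}$, which connects $\psi_x(\x)$ to $\psi_y(\x)$ and crosses only $z_{n-1}$ --- and the fact that its count is $1$ is precisely the analytically delicate point: it holds only for a suitable degenerated almost complex structure, which is why the proposition is stated ``for some choice of almost-complex structure.'' Your claimed bijection between rigid disks in $\SH$ and pairs (rigid disk in $\SH'$, rigid local bigon) structurally cannot produce this term, since the ``outer'' part of the relevant configuration is a boundary degeneration rather than a rigid disk of $\SH'$; and the same bijection is also too strong for the $W$-relations, where the correct statement is only a sum formula: for an $\SH'$-disk crossing $w_{n-1}$, the counts of its two extensions across the stabilization region (one meeting $w_{n-1}$, one meeting $w_n$) add up to the original count, but the individual counts are not determined. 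Two smaller omissions: in $(W_{n-1}+W_n)\psi_y$ you must also note that the two crescents contribute $\psi_x+\psi_x=0$ over $\ZZ/2\ZZ$, which ``the same argument handles $\psi_y$'' glosses over; and your argument for $Z_{n}\psi_y=Z_{n-1}\psi_y=0$ only excludes disks landing in $C_x$, not disks staying in $C_y$.

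For comparison, the paper does not attempt to reprove these counts. It cites the almost-complex structure and holomorphic-disk analysis of \cite[Proposition 6.5]{OS-HFL}, sets up a correspondence of domains using the two periodic domains bounded by $\alpha_{n-g}$ and $\beta_{n-g}$, records the resulting facts (the count of a disk disjoint from the stabilized region is preserved; for disks crossing the old $w_{n-1}$ region the two modified domains have counts summing to the original; there are four exceptional rigid domains crossing exactly one of $w_{n-1},w_n,z_{n-1},z_n$; everything else counts zero), and then reads off the stated relations by inspecting which basepoints each domain crosses. If you want a self-contained argument, the work to be supplied is exactly the count of the annular/boundary-degenerate domain and the sum formula; neither follows from the naive product gluing you describe.
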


\begin{proof}
The construction of the required almost-complex structure and a careful analysis of the relevant holomorphic disks is conducted in the proof of \cite[Proposition 6.5]{OS-HFL}.  The lemma follows by inspecting which domains cross the required basepoints.

First, we describe a correspondence between domains in $\SH$ and $\SH'$.  Let $\x,\y$ be generators of $\CFKt(\SH)$ and let $\phi \in \pi_2(\x,\y)$ be a homotopy class of Whitney disks.  This determines a 2-chain $D(\phi)$ in $\Sigma$ with boundary in $\alphas \cup \betas$.  There is a unique homotopy class $\phi' \in \pi_2(\psi_x(\x), \psi_x(\y))$ such that the 2-chain $D(\phi'_x) = D(\phi)$.  By abuse of notation, let $\phi'$ also denote the similar domain in $\pi_2(\psi_y(\x), \psi_y(\y))$ similarly.  The new alpha curve $\alpha_{g+n}$ bounds a disk $\Sigma$ that corresponds to a periodic domain $A$.  The curve $\beta_{g+n}$ bounds a disk corresponding to a periodic domain $B$.  If $n_{w_{n-1}}(\phi) > 0$, then let $\phi'_a := \phi' - n_{w_{n-1}}(\phi)\cdot A$ and $\phi'_b := \phi' - n_{w_{n-1}}(\phi)\cdot B$.  Note that every domain in $\pi_2(\psi_x(\x),\psi_x(\y))$ is a linear combination of some $\phi'$ and $A$ and $B$.

We can now summarize the counts of rigid holomorphic disks in \cite[Proposition 6.5]{OS-HFL}:
\begin{enumerate}
\item If $\phi \in \pi_2(\x,\y)$ satisfies $n_{w_{n-1}}(\phi) = 0$, then $\#\widehat{\cM}(\phi) = \#\widehat{\cM}(\phi') $.
\item If $\phi \in \pi_2(\x,\y)$ satisfies $n_{w_{n-1}}(\phi) = 1$, then $ \#\widehat{\cM}(\phi) = \#\widehat{\cM}(\phi'_a) + \#\widehat{\cM}(\phi'_b) $.
\item There exist two domains $\psi_1,\psi_2 \in \pi_2(\psi_y(\x),\psi_x(x))$ satisfying
\begin{align*}
\# \widehat{\cM}(\psi_1) &= 1 & \phi \cap \ws &= w_{n-1} & \phi \cap \zs &= \emptyset \\
\# \widehat{\cM}(\psi_2) &= 1 & \phi \cap \ws &= w_{n} & \phi \cap \zs &= \emptyset 
\end{align*}
\item There exists two domains $\psi_1,\psi_2 \in \pi_2(\psi_x(\x),\psi_y(x))$ satisfying
\begin{align*}
\# \widehat{\cM}(\psi_1) &= 1 &  \phi \cap \zs &= z_{n-1} & \phi \cap \ws &= \emptyset \\
\# \widehat{\cM}(\psi_2) &= 1 & \phi \cap \zs &= z_{n} & \phi \cap \ws &= \emptyset 
\end{align*}
\item all other domains $\phi$ with $\mu(\phi) = 1$ and $|\phi \cap (\zs \cup \ws)| \leq 1$ satisfy
\[ \#\widehat{\cM}(\phi) = 0\]
\end{enumerate}

The proposition now easily follows.
\end{proof}

With this holomorphic disk data, we can now prove the isomorphism of Proposition \ref{prop:tilde-stabilization}.

\begin{proof}[Proof of Proposition \ref{prop:tilde-stabilization}]
Let $\{w_i,z_i\}$ be the algebra generators of $\Omega_{\nu}$ and let $\{w'_i,z'_i\}$ be the algebra generators of $\Omega_{\nu'}$.  Define a map $\phi: \Omega_{\nu'} \rightarrow \Omega_{\nu}$ by
\[\phi(z'_i) \coloneqq \begin{cases} z_i & \text{if } i < n-1 \\ z_{n-1} + z_n& \text{if } i = n-1 \end{cases} \qquad \psi(w'_i)  \coloneqq \begin{cases} w_i & \text{if } i < n-1 \\ w_{n-1} + w_n & \text{if } i = n-1 \end{cases}\]
It is an injective algebra homomorphism and we can view $\HFKt(\SH)$ as an $\Omega_{\nu'}$--module by restriction of scalars.  

It follows from Proposition \ref{prop:03-stab-basepoints} that $H_*(C_x),H_*(C_y)$ are $\Omega_{\nu'}$--submodules and furthermore that the chain maps $\psi_x$ and $\psi_y$ induce $\Omega_{\nu'}$--linear isomorphisms
\[(\psi_x)*: \HFKt(\SH') \rightarrow H_*(C_x) \qquad (\psi_y)_*: \HFKt(\SH') \rightarrow H_*(C_y)\]

We can choose an identification of $\Omega_1$ with the subalgebra generated by $w_{n-1}$ and $z_n$ and also an identification
\[\Omega_{\nu} \cong \phi(\Omega_{\nu'}) \otimes \Omega_1\]
To prove the proposition, we need to find an $\Omega_{\nu'}$--linear isomorphism
\[\lambda: H_*(C_x) \oplus H_*(C_y) \rightarrow w_{n-1}z_n \cdot \HFKt(\SH) \oplus z_nw_{n-1} \cdot \HFKt(\SH)\]

First, it is clear from Proposition \ref{prop:03-stab-basepoints} that the map $w_{n-1}$ induces an isomorphism $w_{n-1}: H_*(C_y) \rightarrow H_*(C_x)$.  For dimension reasons, this implies that $H_*(C_x) = \Image(w_{n-1}) = \Image(w_{n-1}z_n)$ by Lemma \ref{lemma:omega-split}.  Secondly, consider the subspace
\[z_n w_{n-1}\cdot H_*(C_y) = (1 + w_{n-1}z_n) \cdot H_*(C_y)\]
The map $1 + w_{n-1}z_n$ is injective when restricted to $H_*(C_y)$ since $w_{n-1}z_n \x \in H_*(C_x) = \Image (w_{n-1})$ for all $\x \in H_*(C_y)$.  Again, for dimension reasons, this implies that 
\[z_n w_{n-1}\cdot  H_*(C_y) = z_n w_{n-1} \cdot \HFKt(\SH)\]
This gives an identification 
\[\HFKt(\SH) \cong w_{n-1}z_n \cdot \HFKt(\SH) \otimes V \cong \HFKt(\SH') \otimes V\]
as $\Omega_{\nu}$--modules.
\end{proof}

\subsection{Algebraic destabilization}

In practice, it is often useful to work with multi--pointed Heegaard diagrams for $L$ with many basepoints in order to compute the differential.  The cost is that the resulting homology $\HFKt(\SH) \cong \HFKh(L) \otimes V^{\otimes n - l}$ is very large.  However, the proofs of Proposition \ref{prop:omega-pi-structure} and Proposition \ref{prop:tilde-stabilization} contain a way to algebraically 'destabilize' an extra pair of basepoints without destroying any algebraic information.

Let $(L,p)$ be an $n$--pointed link with successor function $\nu$ and assume that $\nu(n-1) = n$.  Moreover, let $(L,\p \setminus {p_n})$ be the $n-1$--pointed link obtained by removing the $n^{\text{th}}$ basepoint and let $\nu'$ be its successor function.  The four basepoints $z_{n-1},w_{n-1},z_{n},w_{n}$ are successive along the link $L$ in the Heegaard diagram $\SH$.  Make the change of variables
\begin{align*}
z'_i &\coloneqq \begin{cases}
z_i & \text{if } i < n-1 \\
z_{n-1} + z_n & \text{if }i = n-1 \\
z_n & \text{if }i = n
\end{cases} &
w'_i &\coloneqq \begin{cases}
w_i & \text{if } i < n-1 \\
w_{n-1} + w_n & \text{if }i = n-1 \\
w_{n-1} & \text{if }i = n
\end{cases}
\end{align*}

As in the proof of Proposition \ref{prop:omega-pi-structure}, this determines a decomposition $\Omega_{\nu'} \otimes \Omega_1$ where the final $\Omega_1$-factor is generated by $z_n,w_{n-1}$.  From Proposition \ref{prop:tilde-stabilization}, we can conclude that there is an $\Omega_{\nu'}$--module isomorphism
\[ \HFKt(L,\p \setminus p_n) \cong w_{n-1}z_n \cdot \HFKt(L,\p)\]
More generally, we can effectively destabilize any pair of successive basepoints $z_i,w_i$ or $w_i,z_{\nu(i)}$ at the algebraic level without modifying the Heegaard diagram $\SH$.

\subsection{Orientation reversal}

The homology $\HFKt(L,p)$ is an invariant of the oriented pointed link $(L,\p)$ and reversing the orientation on a component does not preserve the bigraded invariant.  In terms of the Heegaard diagram, reversing the orientation on the component $L_i$ corresponds to swapping the $\zs$-- and $\ws$--basepoints along this component.  Thus if $\SH = (\Sigma,\alphas,\betas,\zs,\ws)$ is a multi--pointed Heegaard diagram for $(L,\p)$ and $L'$ denotes $L$ with another orientation, we can obtain a Heegaard diagram $\widetilde{\SH}' = (\Sigma,\alphas,\betas,\zs',\ws')$ for $(L',p)$ where $\ws \cup \zs = \ws' \cup \zs'$.  Thus, the chain groups $\CFKt(\SH)$ and $\CFKt(\SH')$ are identical and since $\widetilde{\del}$ ignores domains that cross basepoints, there is an ungraded isomorphism
\[\psi: \HFKt(\SH) \rightarrow \HFKt(\SH')\]
A grading computation shows that $\psi$ is homogeneous with respect to the $\delta$--grading, with a shift determined by the writhes of $L$ and $L'$ \cite{OSS-book}.

The isomorphism $\psi$ also preserves the Clifford module structure.  Suppose that $|\p| = n$ and $|L_i \cap \p| = k$.  We can label the basepoints so that the first $k$ basepoints lie on component $L_i$ and $\nu(j) = j+1 \text{ mod }k$ for $i \leq k$.  Reversing the orientation on component $i$ replaces $\nu$ with a new successor function $\nu'$ where
\[\nu'(j) = \begin{cases}
j - 1 \text{ mod }k & \text{if } j \leq k \\
\nu(j) & \text{otherwise} \end{cases} \]
There is a bijection $\phi: \ws \cup \zs \rightarrow \ws' \cup \zs'$ defined by
\[ \phi(z_j)  \coloneqq \begin{cases} 
w'_{k - i + 1} & \text{if } j \leq k \\
z'_j & \text{otherwise}
\end{cases} \qquad
\phi(w_j) \coloneqq \begin{cases} 
z'_{k - i + 1} & \text{if } j \leq k \\
w'_j & \text{otherwise}
\end{cases} \]
This induces a correspondance between the chain maps $\{W_i,Z_i\}_i$ on $\CFKt(\SH)$ and the chain maps $\{W'_i,Z'_i\}_i$ on $\CFKt(\SH')$ and an algebra isomorphism between $\Omega_{\nu}$ and $\Omega_{\nu'}$.  With this identification, it is clear that $\psi$ induces an isomorphism of Clifford modules.

\subsection{Unlinks}

The $\Theta^{\otimes l}$--module structure of $\HFKh(U_l)$ for the unlink $U_l$ is particularly simple.

\begin{lemma}
\label{lemma:basepoints-unknot}
Suppose that $(\cU_k,\p)$ is the unlink of $k$ components and a single basepoint on each component.  Let $p_i$ denote the basepoint on the $i^{\text{th}}$ component and $z_i,w_i$ the corresponding maps.  Then
\[z_i \cdot \HFKh(\cU_k) = w_i \cdot \HFKh(\cU_k) = \{0\} \]
\end{lemma}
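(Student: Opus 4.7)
The plan is a pure grading argument. By Lemma~\ref{lemma:basepoint-homotopy} and the surrounding discussion, the basepoint chain maps $Z_i$ and $W_i$ are homogeneous of bidegree $(-1,-1)$ and $(1,1)$ respectively, so the induced homology maps $z_i, w_i$ on $\HFKt$ inherit the same bidegrees. Since $(\cU_k, \p)$ has one basepoint on each component, $n = l = k$ and $\HFKt(\cU_k, \p) \cong \HFKh(\cU_k)$; in particular $z_i$ decreases the Alexander grading by $1$ and $w_i$ increases it by $1$ on $\HFKh(\cU_k)$.

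It therefore suffices to show that $\HFKh(\cU_k)$ is supported entirely in Alexander grading $a = 0$. I would verify this by exhibiting a concrete multi-pointed Heegaard diagram for $(\cU_k, \p)$ in which every generator sits in Alexander multi-grading $\mathbf{0}$. The simplest choice is a split diagram obtained by tubing together $k$ copies of the standard genus-$1$ one-pointed diagram for the unknot, with all tubes drawn in the complement of $\ws \cup \zs$; any such diagram has no Alexander grading imbalance between the two basepoints on each component and no cross-component contribution, so every intersection point of $\TT_\alpha \cap \TT_\beta$ lies in multi-grading $\mathbf{0}$. After collapsing the multi-Alexander grading to the single Alexander grading, the whole homology is supported in $a = 0$. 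Alternatively, a block-diagonal grid diagram for $\cU_k$ (with one $X$ and one $O$ per component) makes the same computation transparent.

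Once the support statement is in hand, the proof concludes by a one-line grading chase: for any homogeneous $\alpha \in \HFKh_m(\cU_k, 0)$, the class $z_i\alpha$ lies in $\HFKh_{m-1}(\cU_k, -1) = 0$ and $w_i\alpha$ lies in $\HFKh_{m+1}(\cU_k, 1) = 0$, so by linearity $z_i$ and $w_i$ annihilate all of $\HFKh(\cU_k)$. The only step with real content is the Alexander-grading concentration of $\HFKh(\cU_k)$; this presents no genuine obstacle beyond producing a well-chosen diagram, and in fact could equally be deduced from an iterated Kunneth argument starting with $\HFKh(U) = \FF_{(0,0)}$, since the Alexander grading of the disjoint-union factor $V$ is zero in both summands.
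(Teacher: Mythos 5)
Your proof is correct, and it differs from the paper's treatment: the paper does not give an argument at all, but simply cites an external reference (\cite[Proposition 3.6]{BLS}), whereas you provide a short self-contained grading argument. The argument you give is essentially the cleanest possible one: $\HFKh(\cU_k)$ is concentrated in Alexander grading $0$, and since $z_i$ and $w_i$ are homogeneous of Alexander degree $-1$ and $+1$ respectively, they must vanish. The Alexander-concentration claim follows immediately from Proposition~\ref{prop:basepoints-kunneth}, which says $\HFKh(L_1 \cup L_2) \cong \HFKh(L_1) \otimes \HFKh(L_2) \otimes U$ with $U$ supported in bigradings $(0,0)$ and $(-1,0)$; iterating from $\HFKh(U_1) = \FF_{(0,0)}$ gives $\HFKh(\cU_k) \cong U^{\otimes(k-1)}$, all in $a = 0$. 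One small notational slip: in your closing sentence you refer to the ``disjoint-union factor $V$'' as having zero Alexander grading in both summands, but in the paper's notation $V$ is the \emph{stabilization} factor (supported in $(0,0)$ and $(-1,-1)$, hence Alexander gradings $0$ and $-1$); the disjoint-union factor is $U$ (supported in $(0,0)$ and $(-1,0)$). The conclusion you draw requires $U$, not $V$, so the label should be corrected, but the substance of the argument is unaffected. The diagrammatic alternative you sketch (a split or block-diagonal grid diagram) would also work, though it is not needed once the Kunneth formula is in hand.
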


\begin{proof}
Proved in \cite[Proposition 3.6]{BLS}.
\end{proof}

\subsection{Connected sums and disjoint unions}

The knot Floer groups satisfy a Kunneth-type formula for connected sums:
\[\HFKh(L_1 \# L_2) \cong \HFKh(L_1) \otimes \HFKh(L_2)\]
There is also a disjoin union formula, obtained from the previous isomorphism using the obvious identification $L_1 \cup L_2 \sim L_1 \# U_2 \# L_2$ where $U_2$ is the 2-component unlink.
\[\HFKh(L_1 \cup L_2) \cong \HFKh(L_1) \otimes \HFKh(U_2) \otimes \HFKh(L_2)\]
These formulas can be extended to account for the Clifford module structure.

Let $L_1,L_2$ be two oriented links.  Index the components of $L_1$ from $1$ to $l_1$ and the components of $L_2$ from $1$ to $l_2$.  For any pair $i,j$ let $L_1 \#_{i,j} L_2$ denote the link obtained by summing the $i^{\text{th}}$ component of $L_1$ to the $j^{\text{th}}$ component of $L_2$.  The homology group $\HFKh(L_1)$ is a module over $\Theta^{l_1}$.  The two basepoints on the $i^{\text{th}}$ component pick out a unique factor $\Theta_i$ of $\Theta^{l_1}$.  Similarly, $\HFKh(L_2)$ is a $\Theta^{l_2}$--module and each factor $\Theta_j$ corresponds to the $j^{\text{th}}$ component of $L_2$.  

\begin{proposition}
\label{prop:basepoints-kunneth}
Let $L_1,L_2$ be oriented links.  There is a $\Theta^{\otimes l_1 + l_2 -1 }$--linear isomorphism
\[
\HFKh(L_1 \#_{i,j} L_2) \cong \HFKh(L_1) \otimes_{\Theta_{i} = \Theta_j} \HFKh(L_2) \]
and a $\Theta^{l_1 + l_2}$--linear isomorphism
\[
\HFKh(L_1 \cup L_2)  \cong \HFKh(L_1) \otimes_{\FF} \HFKh(L_2) \otimes_{\FF} U \]
where $U$ is a 2--dimensional $\FF$--vector space supported in bigradings $(0,0)$ and $(-1,0)$.
\end{proposition}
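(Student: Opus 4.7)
The plan is to realize both isomorphisms by constructing multi-pointed Heegaard diagrams whose chain complexes factor as tensor products, then analyzing the induced basepoint chain maps. For the connected sum $L_1 \#_{i,j} L_2$, choose multi-pointed Heegaard diagrams $\SH_k = (\Sigma_k, \alphas_k, \betas_k, \zs^{(k)}, \ws^{(k)})$ for $L_k$ with one basepoint pair per component, and denote the pairs on the components to be merged by $(z^{(1)}_i, w^{(1)}_i)$ and $(z^{(2)}_j, w^{(2)}_j)$. Following the usual construction for the Kunneth formula (compare \cite{OS-HFK, OS-HFL}), form $\SH$ by performing a 0/3 stabilization of each $\SH_k$ near its marked basepoint and then tubing the two surfaces at the auxiliary basepoints just introduced. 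For a sufficiently stretched almost-complex structure along the tube, all rigid pseudoholomorphic disks decompose as products of disks in $\SH_1$ and $\SH_2$, giving a chain-level isomorphism $\CFKt(\SH) \cong \CFKt(\SH_1) \otimes_{\FF} \CFKt(\SH_2)$.

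Next I would analyze the basepoint chain maps by the same degeneration. Each basepoint on a component other than the $i$-th of $L_1$ or the $j$-th of $L_2$ acts on a single tensor factor and contributes its action to the corresponding $\Theta$ summand. The basepoints $z^{(1)}_i, w^{(1)}_i, z^{(2)}_j, w^{(2)}_j$ all lie on the merged component, and their chain maps likewise factor through a single side of the tube. After destabilizing the auxiliary basepoints via Proposition \ref{prop:tilde-stabilization} and performing the change of basis from the proof of Proposition \ref{prop:omega-pi-structure}, the resulting $\Omega_{\nu}$-module structure on $\HFKt(\SH)$ reduces the four merged-component basepoint actions to a single $\Theta$ factor that acts diagonally on both tensor factors, identifying $\Theta_i$ (from $\HFKh(L_1)$) with $\Theta_j$ (from $\HFKh(L_2)$). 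Projecting out the auxiliary $\Omega_1$ factors via Lemma \ref{lemma:omega-split} then produces the first isomorphism.

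The disjoint union formula follows by modifying the above construction: attach $\Sigma_1$ and $\Sigma_2$ by a tube together with an extra $\alpha$--$\beta$ pair intersecting transversely in two points, producing a multi-pointed Heegaard diagram for $(S^3, L_1 \cup L_2)$. The extra intersection points contribute a two-dimensional summand supported in bigradings $(0,0)$ and $(-1,0)$, and because no components of $L_1$ and $L_2$ are identified, the Clifford action is the external tensor product. Alternatively, apply the connected sum formula to $L_1 \#_{i,1}(U' \cup L_2)$, where $U'$ is an unknot component summed to the $i$-th component of $L_1$, and use Lemma \ref{lemma:basepoints-unknot} to collapse the unknot contribution into the two-dimensional factor.

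The main obstacle is the holomorphic disk degeneration along the connecting tube, which must be carried out carefully enough to track each individual basepoint chain map and not just the underlying differential. In particular, one needs a Kunneth-type analog of Proposition \ref{prop:03-stab-basepoints} identifying which domains in the combined diagram $\SH$ can contribute to each basepoint map $W_p, Z_p$, so that the basepoint actions on the merged component are correctly identified as the diagonal action. This should follow from standard neck-stretching arguments, but the presence of the auxiliary basepoints from the 0/3 stabilizations requires care in the bookkeeping before they are algebraically destabilized away.
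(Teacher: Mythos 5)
Your connected--sum argument is, at bottom, the paper's: the paper's proof consists of citing the Kunneth argument of \cite[Theorem 7.1]{OS-HFK} (cf.\ \cite[Theorem 11.1]{OS-HFL}) and observing that the only modification needed is to restrict to domains which cross a single basepoint --- which is precisely the degeneration analysis you spell out. The genuine differences are in the setup and in the second half. For the setup, the standard construction connect-sums the two diagrams directly at the basepoints $z^{(1)}_i$ and $w^{(2)}_j$, with no index $0/3$ stabilizations, so the merged component carries only the pair $w^{(1)}_i, z^{(2)}_j$ and none of your auxiliary bookkeeping (nor the subsequent algebraic destabilization via Proposition \ref{prop:tilde-stabilization} and Lemma \ref{lemma:omega-split}) is required; your stabilize-then-tube variant is workable but buys nothing and creates exactly the step you flag as delicate. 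Relatedly, whether the merged component's $\Theta$ appears to act ``diagonally'' or through the surviving basepoint on each factor depends on this choice of diagram; both presentations compute the same isomorphism class of module, so this is cosmetic. For the disjoint union, the paper takes a purely formal route: it uses the identification $L_1 \cup L_2 \sim L_1 \# U_2 \# L_2$ with $U_2$ the two-component unlink, applies the connected sum formula twice, and invokes Lemma \ref{lemma:basepoints-unknot} to see that the unlink factor contributes the trivially-acted-on two-dimensional space $U$. Your direct construction with an extra $\alpha$--$\beta$ pair intersecting in two points also works, but your stated alternative via $L_1 \#_{i,1}(U' \cup L_2)$ is circular, since it presupposes the disjoint union formula for $U' \cup L_2$; if you want the formal route, run it through the two-component unlink as the paper does.
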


\begin{proof}
The connect sum formula can be derived by the same arguments as the Kunneth formula description of $\CFK^{\infty}(L_1 \# L_2)$ (\cite[Theorem 7.1]{OS-HFK}, cf. \cite[Theorem 11.1]{OS-HFL}).  The only modification is to restrict to domains which cross a single basepoint.  The disjoint union formula follows from the connected sum formula.
\end{proof}

\section{The unoriented skein exact sequence}
\label{sec:skein}

The unreduced knot Floer homology groups satisfy an unoriented skein exact sequence.  Let $L_{\infty},L_{1},L_0$ be a skein triple and $-L_{\infty}, - L_1, -L_0$ their mirror images.  Manolescu  first constructed an unoriented skein exact triangle on ungraded, unreduced knot Floer homology \cite{Manolescu-skein}: 

\[ \xymatrix{
\HFKt(-L_{\infty},\p) \ar[rr]^{f_*} && \HFKt(-L_0,\p) \ar[dl]^{g_*} \\
& \HFKt(-L_1,\p) \ar[ul]^{h_*} & 
} \]

Manolescu and Ozsv{\'a}th later refined this to a exact triangle that respects $\delta$-grading \cite{MO-QA}.  Wong constructed a version of the unoriented exact triangle using grid diagrams \cite{Wong-skein}.  A different construction of the exact triangle using grid diagrams can be found in \cite{OSS-book}.  In this paper, we will use Wong's construction although we expect that the relevant results hold for any theory.

\subsection{The unoriented exact triangle via grid diagrams}

There exist compatible grid diagrams $\GG_{\infty}, \GG_1,\GG_0$ for the three links $-L_{\infty},-L_1,-L_0$, each of size $n$ and that differ only near the crossing to be resolved as in Figure \ref{fig:skein-resolutions}.  The diagrams $\GG_0,\GG_1,\GG_{\infty}$ can be realized on the same grid with three different choices $\{\gamma_0,\gamma_1,\gamma_{\infty}\}$ of the final $\beta$ curve.  The curves $\gamma_0,\gamma_1$ are included in the final pane of Figure \ref{fig:skein-resolutions}.  Wong constructs chain maps
\begin{align*}
f:& \GCt(\GG_{\infty}) \rightarrow \GCt(\GG_0) &
g:& \GCt(\GG_0) \rightarrow \GCt(\GG_1) &
h:& \GCt(\GG_1) \rightarrow \GCt(\GG_{\infty})
\end{align*}
that induce an exact triangle 
\[ \xymatrix{
\GHt(\GG_{\infty})\left[-\frac{l_{\infty}}{2}-\frac{w_{\infty}}{4}\right] \ar[rr] && \GHt(\GG_0)\left[-\frac{l_1}{2} - \frac{w_1 + 1}{4} \right] \ar[dl]^{[1]} \\
& \GHt(\GG_1)\left[-\frac{l_0}{2} - \frac{w_0-1}{4} \right] \ar[ul] & 
} \]
For $\ast \in \{\infty,0,1\}$, the constants $l_*$ and $w_*$ respectively denote the number of components of $L_*$ and the writhe of $G_*$, the planar diagram of $L_*$ given by $\GG_*$. The appropriate grading shifts are computed in \cite[Section 6]{Wong-skein}.  Translating into our present conventions, this is equivalent to the grading shifts of \cite[Theorem 10.2.4]{OSS-book}.

For our application, we will use exactness but the only explicit chain map we need is $g: \GCt(\GG_0) \rightarrow \GCt(\GG_1)$. Attaching a 1--handle induces an elementary cobordism from $-L_0$ to $-L_1$.  The map $g$ is the sum $g = \cP + \cT$ of two maps induced by this cobordism and is defined by counting holomorphic triangles.  On a grid diagram, this count is combinatorially determined. 

\begin{figure}
\centering
\labellist
	\large\hair 2pt
	\pinlabel $a$ at 980 0
	\pinlabel $b$ at 1015 211
	\pinlabel $\gamma_1$ at 930 0
	\pinlabel $\gamma_0$ at 1030 0
	\pinlabel $\beta_{i-1}$ at 850 0
	\pinlabel $\beta_{i+1}$ at 1105 0
	\large\hair 2pt
	\pinlabel $\OO_1$ at 980 76
	\pinlabel $\XX_1$ at 1045 141
	\pinlabel $\XX_2$ at 920 206
	\pinlabel $\OO_2$ at 980 271
	\pinlabel $-\GG_0$ at 100 5
	\pinlabel $-\GG_1$ at 380 5
	\pinlabel $-\GG_{\infty}$ at 662 5
\endlabellist
\includegraphics[width=\textwidth]{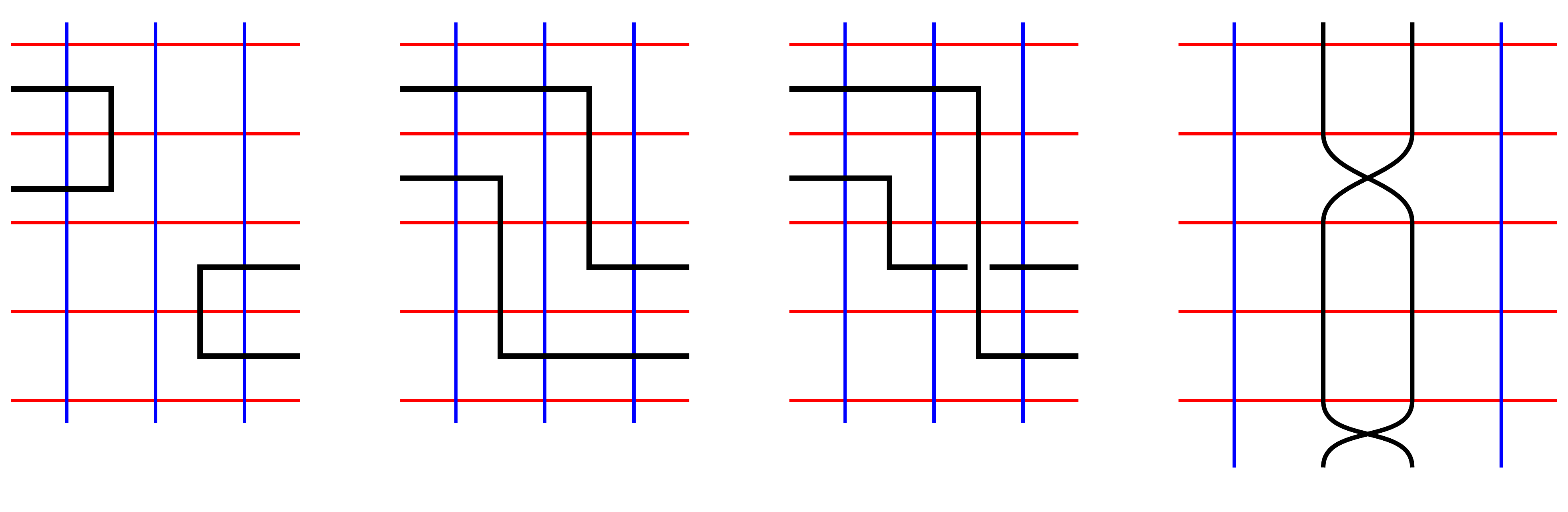}
\caption{On the left are diagrams for $-\GG_0,-\GG_1,$ and $-\GG_{\infty}$ near the crossing.  All can be obtained by rearranging the four basepoints $\XX_1,\XX_2,\OO_1,\OO_2$ on the same grid. On the right is a simultaneous diagram for $-\GG_0$ and $-\GG_1$.}
\label{fig:skein-resolutions}
\end{figure}

The chain map $g$ is obtained by counting embedded pentagons and triangles as follows. Choose $\x \in \St(\GG_{1})$ and $\y \in \St(\GG_0)$.  The two curves $\gamma_0$ and $\gamma_1$ intersect in two points $a$ and $b$.  A {\it pentagon} is an embedded polygon $p$ such that
\begin{enumerate}
\item $a \in \del p$
\item the oriented boundary of $p$ consists of 5 arcs, in order, an arc in $\gamma_0$, an arc in a horizontal circle, an arc in a vertical circle, an arc in a second horizontal circle, and an arc in $\gamma_1$.
\item all corners form angles less than $\pi$
\item viewing $\x$ and $\y$ as oriented 0--chains, the oriented boundary of the $\alpha$ arcs of $\del p$ is exactly $\y - \x$
\end{enumerate}
Let $\Pent(\x,\y)$ denote the set of pentagons from $\x$ to $\y$ and let $\Pent^0(\x,\y)$ be the set of pentagons whose interior disjoint from $\x$ and $\y$.  A {\it triangle} is defined similarly to a pentagon, except that $b \in \del p$ and the oriented boundary of $t$ consists of 3 arcs, in order, an arc in $\gamma_0$, an arc in a horizontal circle, and an arc in $\gamma_1$. Let $\Tri(\x,\y)$ denote the set of triangles from $\x$ to $\y$ and let $\Tri^0(\x,\y)$ be the set of triangles whose interior is disjoint from $\x$ and $\y$.
Define pentagon and triangle maps from $\GCt(\GG_1)$ to $\GCt(\GG_0)$ by
\begin{align*}
\cP(\x) & \coloneqq \sum_{\y \in \St(\GG_1)} \sum_{\substack{p \in \Pent^0(\x,\y) \\ p \cap \OO = 0 \\ p \cap \XX = 0}} \y &
\cT(\x) & \coloneqq \sum_{\y \in \St(\GG_1)} \sum_{\substack{p \in \Tri^0(\x,\y) \\ p \cap \OO = 0 \\ p \cap \XX = 0}} \y 
\end{align*}
and set $g: = \cP + \cT$.  Standard degeneration arguments show that $\cP,\cT$ and $g$ are chain maps.

\subsection{Equivariance of $g_*$}
\label{sub:basepoint-equivariance}

The induced homology maps $\cP_*,\cT_*$ are mostly, but not fully, equivariant with respect to the basepoint maps.  This is required because the homology groups are Clifford modules over different Clifford algebras.  The diagrammatic 1--handle attachment changes the successor function $\nu$ and therefore the ring $\Omega_{\nu}$ .  However, we can exactly quantify the failure of $\cP_*, \cT_*$ and $g_*$ to be equivariant.  

For simplicity, assume that the saddle map from $L_1$ to $L_0$ is orientable.  We can label the basepoints as $\XX$ and $\OO$ in $\GG_1,\GG_0$ near the saddle move as in Figure \ref{fig:skein-resolutions}, so that for each $i$, the basepoints $\XX_i$ and $\OO_i$ are sequential along the link $L_1$.  The nonorientable case is similar, except that the partitions of the basepoints between $\XX$ and $\OO$ are different.  In $\GG_0$, we relabel $\XX_2$ as an $\OO$--basepoint and $\OO_2$ as an $\XX$-basepoint.  In $\GG_1$, we relabel $\XX_2$ as an $\OO$--basepoint and $\OO_1$ as an $\XX$--basepoint. 
The following relations are a straightforward extension of \cite[Propositions 3.7, 3.8]{BLS}.

\begin{lemma}
Suppose that $L_0,L_1$ are oriented as in Figure \ref{fig:skein-resolutions}.  Let $W_i$ denote the basepoint map on $\GHt(\GG_1),\GHt(\GG_0)$ induced by $\OO_i$ and let $Z_i$ denote the basepoint map on $\GHt(\GG_1),\GHt(\GG_0)$ induced by $\XX_i$.  Then the following maps are homotopic:
\begin{align*}
[\cP,W_i] & \sim 0 & \text{for } i&=1,\dots,n & [\cP,Z_i] & \sim 0 &  \text{for } i&=3,\dots,n \\
[\cT,W_i] & \sim 0 &  \text{for } i&=3,\dots,n & [\cT,Z_i] & \sim 0 &  \text{for } i&=1,\dots,n \\
[\cT,W_i] & \sim \cP &  \text{for } i&=1,2  & [\cP,Z_i] & \sim \cT &  \text{for } i&=1,2
\end{align*}
If the cobordism is not orientable, the same relations hold after relabeling the basepoint maps.
\end{lemma}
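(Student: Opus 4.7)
The plan is to prove each relation by a chain-level cancellation argument, counting composite domains (pentagon + rectangle or triangle + rectangle) in one-parameter families. This is a direct adaptation of the argument for rectangles in \cite[Propositions 3.7, 3.8]{BLS}, now extended to accommodate pentagon and triangle factors, and most of the bookkeeping is inherited from there.

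First I would write $[\cP, W_i](\x) = (\cP \circ W_i + W_i \circ \cP)(\x)$ as a sum indexed by composite domains $R * P$ and $P * R$, where $P$ is an empty pentagon connecting states of $\GG_\infty$ and $\GG_0$ and $R$ is an empty rectangle on one of the two grids containing precisely the basepoint $\OO_i$ and avoiding $\XX$. Group these composite domains by their underlying shape on the torus. For indices $i \geq 3$, the basepoint $\OO_i$ lies far from the intersection points $a, b \in \gamma_0 \cap \gamma_1$, so every composite domain of the relevant shape admits exactly two decompositions into a pentagon followed by a rectangle or vice versa. These two decompositions correspond to the two orderings of the composition, so they cancel modulo two, and the commutator is null-homotopic. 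The identical argument handles $[\cT, W_i]$ for $i \geq 3$, and $[\cP, Z_i], [\cT, Z_i]$ with the roles of $\OO$ and $\XX$ swapped.

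The delicate part is $i \in \{1,2\}$, where $\OO_1, \OO_2, \XX_1, \XX_2$ all sit in the small strip bounded by $\gamma_0, \gamma_1, \beta_{i-1}, \beta_{i+1}$. Here a composite domain of total shape ``pentagon + rectangle containing $\XX_i$'' can degenerate by collapsing a corner through $a$, producing an honest triangle on $(\GG_0, \GG_1)$; symmetrically, a composite ``triangle + rectangle containing $\OO_i$'' can degenerate through $b$ to yield a pentagon. Enumerating the exceptional composite shapes in this strip, one finds that most still come in cancelling pairs but one family is unpaired and matches exactly a triangle (respectively pentagon). Summing, one obtains the correction terms $[\cP, Z_i] \sim \cT$ and $[\cT, W_i] \sim \cP$ for $i = 1, 2$. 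Crucially, the dual degenerations $[\cP, W_i]$ and $[\cT, Z_i]$ remain uncorrected even for $i = 1, 2$: a pentagon already carries a corner at $a$, so collapsing toward $a$ produces an inadmissible domain rather than a triangle, and symmetrically for triangles at $b$.

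The main obstacle I anticipate is the careful enumeration of composite domains for $i \in \{1,2\}$, where one must simultaneously verify that the exceptional domain is empty of the other forbidden basepoints, count its multiplicity at $\OO_i$ or $\XX_i$, and check that no further degenerations occur. The final step is to handle the nonorientable saddle: in that case the partition of the four basepoints near the crossing between $\XX$ and $\OO$ is swapped on two of the four sites (as described in the paragraph above the lemma), but the combinatorics of composite domains on the grid is intrinsic to the shape of the polygons, so relabeling the basepoint maps in accordance with the new partition yields precisely the same relations.
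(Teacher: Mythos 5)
Your proposal follows essentially the same route as the paper's proof: adapt the composite-domain degeneration argument of \cite[Propositions 3.7, 3.8]{BLS} from rectangles to include pentagon and triangle factors, observe that for $i \geq 3$ every composite domain has exactly two decompositions and therefore cancels mod $2$, and isolate an extra degeneration near the intersection points $a, b \in \gamma_0 \cap \gamma_1$ responsible for the correction terms when $i \in \{1,2\}$. One cautionary note: the paper does not describe the exceptional contributions as ``collapsing a corner'' but instead pins them down precisely as alternate decompositions of the composite domain into a bigon $B_i$ (containing $\OO_i$, from $b$ to $a$) plus an empty pentagon, or an annulus $A_i$ (containing $\XX_i$, from $b$ to $a$) plus an empty triangle; your corner-collapse picture is gesturing at the same phenomenon but is not a substitute for the explicit enumeration of $B_1,B_2,A_1,A_2$, which is exactly what closes the gap you flag at the end.
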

\begin{proof}
The required homotopies are obtained counting pentagon and triangle maps that cross the appropriate basepoints.  They are defined as
\begin{align*}
\cP_{\XX_i}(\x) & \coloneqq \sum_{\y \in \St(\GG_1)} \sum_{\substack{p \in \Pent^0(\x,\y) \\ p \cap \OO = 0 \\ p \cap \XX_j = 0 \text{ if } i\neq j \\ \XX_i \in p}} \y  &
\cP_{\OO_i}(\x) & \coloneqq \sum_{\y \in \St(\GG_1)} \sum_{\substack{p \in \Pent^0(\x,\y) \\ p \cap \XX = 0 \\ p \cap \OO_j = 0 \text{ if } i\neq j \\ \OO_i \in p}} \y \\
\cT_{\XX_i}(\x) & \coloneqq \sum_{\y \in \St(\GG_1)} \sum_{\substack{p \in \Tri^0(\x,\y) \\ p \cap \OO = 0 \\ p \cap \XX_j = 0 \text{ if } i\neq j \\ \XX_i \in p}} \y & 
\cT_{\OO_i}(\x) & \coloneqq \sum_{\y \in \St(\GG_1)} \sum_{\substack{p \in \Tri^0(\x,\y) \\ p \cap \XX = 0 \\ p \cap \OO_j = 0 \text{ if } i\neq j \\ \OO_i \in p}} \y
\end{align*}

In the first four cases, each term in the appropriate equation is obtained by composing a rectangle and a pentagon or a rectangle and a triangle and each composite domain has exactly 2 decompositions of this form.  For the final two cases, however, there is an extra degeneration case to consider.  This stems from the fact that there are two bigons $B_1,B_2$ from $b$ to $a$, where $B_i$ contains $\OO_i$, and two annuli $A_1,A_2$ from $b$ to $a$, where $A_i$ contains $\XX_i$.  The composite of a triangle and a rectangle can decompose into some $B_i$ and an empty pentagon, while the composite of a pentagon and a rectangle can decompose into some $A_i$ and an empty triangle.

In the nonorientable case, relabeling the basepoints clearly does not affect the degeneration arguments.
\end{proof}

Passing to homology, we obtain the following proposition as an easy corollary of the previous lemma.

\begin{proposition}
\label{prop:basepoint-equivariance}
Let $w_i,z_i$ denote the basepoint maps on $\GHt(\GG_0)$ and $\GHt(\GG_1)$.  If the cobordism is orientable, then the basepoint maps satisfy the following anticommutation relations with $g_*$
\begin{align*}
[g_*,z_i] & = 0 &  \text{for } i&=3,\dots,n  & [g_*,z_1 + z_2] &= 0 \\
[g_*,w_i] & = 0 &  \text{for } i&=3,\dots,n & [g_*,w_1+w_2] &= 0 \\
[g_*,z_i + w_j] &= g_*  & \text{for } i,j& \in \{1,2\}
\end{align*}
If the cobordism is not orientable, the same relations hold after relabeling the basepoint maps.
\end{proposition}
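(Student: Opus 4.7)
The plan is to derive the proposition as a formal corollary of the preceding chain-level lemma, using only the decomposition $g = \cP + \cT$, the bilinearity of the anticommutator, and the characteristic-two arithmetic in force throughout. Since a chain homotopy $\phi \sim \psi$ descends to equality $\phi_* = \psi_*$ on homology, the six homotopy relations of the previous lemma become genuine algebraic identities among $\cP_*, \cT_*$ and the basepoint operators. Once these are in hand, each required commutator is unpacked by $[\cP_* + \cT_*, \cdot] = [\cP_*, \cdot] + [\cT_*, \cdot]$ and the individual summands are replaced by the lemma.

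First I would dispatch the ``stationary'' indices. For $i \geq 3$, both $[\cP_*, z_i]$ and $[\cT_*, z_i]$ vanish, so $[g_*, z_i] = 0$ is immediate, and likewise for $w_i$. For $i=1,2$, the lemma gives
\[
[g_*, z_i] = [\cP_*, z_i] + [\cT_*, z_i] = \cT_* + 0 = \cT_*,
\]
so $[g_*, z_1 + z_2] = 2 \cT_* = 0$ over $\FF_2$. The symmetric computation, with the roles of $\cP$ and $\cT$ swapped, produces $[g_*, w_1 + w_2] = 2 \cP_* = 0$. For the mixed relation with $i,j \in \{1,2\}$, expanding gives
\[
[g_*, z_i + w_j] = [\cP_*, z_i] + [\cT_*, z_i] + [\cP_*, w_j] + [\cT_*, w_j] = \cT_* + 0 + 0 + \cP_* = g_*,
\]
which is exactly the last claim.

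The only residual point is the nonorientable case. There the partition of the four basepoints near the saddle into $\XX$'s and $\OO$'s is altered as described just before the lemma, so the symbols $z_1, z_2, w_1, w_2$ now label a different combination of chain maps. However, the polygon counts underlying the chain-level homotopies are insensitive to this relabeling, so the lemma still supplies the same six relations after the index translation, and the computations above apply verbatim. I do not anticipate a genuine obstacle: this proposition is essentially a bookkeeping exercise that repackages the chain-level homotopies of the preceding lemma into statements on $\GHt(\GG_0)$ and $\GHt(\GG_1)$, with all real content already concentrated in that lemma.
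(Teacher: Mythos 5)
Your proposal is correct and is exactly the paper's argument: the paper derives this proposition by passing to homology in the preceding lemma, so that the chain homotopies become identities among $\cP_*$, $\cT_*$ and the basepoint operators, and then expands $[g_*,\cdot]=[\cP_*,\cdot]+[\cT_*,\cdot]$ over $\FF_2$, with the nonorientable case handled by the same relabeling remark. Your write-up simply spells out the bookkeeping the paper leaves implicit.
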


\subsection{Connected sum and disjoint union}
\label{sub:disjoint-connect}

Let $L_1,L_2$ be oriented links and let $-L_2$ denote $L_2$ with the opposite orientation.  As in Subsection \ref{sub:Kh-connected-sum}, we consider the skein triple  \[L_1 \# -L_2 \qquad  L_1 \cup L_2 \qquad  L_1 \# L_2 \]
Choose grid diagrams $\GG_{\infty},\GG_0,\GG_1$ for these three links with the required form near the crossing to be resolved.  The grid homologies satisfy
\begin{align*}
\GHt(\GG_0) &\cong \HFKh(L_1) \otimes \HFKh(L_2) \otimes V^{n-l} \\
\GHt(\GG_{\infty}) &\cong \HFKh(L_1) \otimes \HFKh(L_2) \otimes V^{n-l} \\
\GHt(\GG_1) &\cong \HFKh(L_1) \otimes \HFKh(L_2) \otimes \HFKh(\mathcal{U}_2) \otimes V^{n - l - 1}
\end{align*}
Thus, all three groups have the same total rank over $\FF$.  However, $V$ and $\HFKh(\cU_2)$ can be distinguished by their $\delta$--gradings.  This allows us to compute the $\delta$--graded ranks of the maps in the skein exact triangle.  The number of components satisfies
\[l_1 = l_0 + 1 = l_{\infty} + 1\]
and the writhes satisfy
\[\text{wr}(G_{\infty})  = \text{wr}(G_1) -1 = \text{wr}(G_0) - 1\]
since $G_{\infty}$ has exactly 1 extra negative crossing.  The graded long exact sequence is therefore

\begin{align*}
 \xymatrix{
\ar[r] & \GHt_{\delta} (\GG_0) \ar[r]^{g_*} & \GHt_{\delta}(\GG_1) \ar[r]^{h_*} & \GHt_{\delta}(\GG_{\infty}) \ar[r]^{f_*} & \GHt_{\delta - 1}(\GG_0) \ar[r] &
}
\end{align*}

\begin{lemma}
\label{lemma:half-rank}
For every $\delta \in \ZZ$, the rank of $g_*: \GHt_{\delta}(\GG_0) \rightarrow \GHt_{\delta}(\GG_1)$ is equal to $\frac{1}{2} \rk \GHt_{\delta}(\GG_1)$.
\end{lemma}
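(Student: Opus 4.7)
The plan combines an explicit computation of the $\delta$-graded ranks of all three groups in the skein triangle with the Clifford module structure from Section~\ref{sec:basepoint} and the equivariance relations of Proposition~\ref{prop:basepoint-equivariance}.

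First, using Proposition~\ref{prop:basepoints-kunneth} together with the observation that $V$ is concentrated in $\delta$-grading $0$ while $\HFKh(\cU_2)$ spreads over $\delta \in \{0, -1\}$, a direct calculation gives
\[
\rk \GHt_\delta(\GG_1) = 2 a_\delta \cdot 2^{n - l - 1},
\]
where $l = l_1 + l_2 - 1$ is the number of components of $L_1 \# L_2$ and $a_\delta := \rk \HFKh_\delta(L_1 \# L_2)$. The statement thus reduces to showing $\rho_g^\delta := \rk(g_*|_\delta) = a_\delta \cdot 2^{n-l-1}$.

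To establish the upper bound $\rho_g^\delta \leq a_\delta \cdot 2^{n-l-1}$, I would exploit the Clifford module structure on $\GHt(\GG_1)$. The near-saddle basepoint maps on the merged link component generate an $\Omega_1$-subalgebra, and the element $X := z_1 + w_1$ satisfies $X^2 = 1$ on the target provided the merged component carries further basepoints (which can be arranged via Proposition~\ref{prop:tilde-stabilization}). By Lemma~\ref{lemma:omega-split}, $\Ker(X+1)$ is then a half-rank submodule of $\GHt_\delta(\GG_1)$. The equivariance relation $[g_*, z_1 + w_1] = g_*$ of Proposition~\ref{prop:basepoint-equivariance} rearranges to $(X+1) g_*(y) = g_*(X y)$, so that
\[
(X+1) \cdot \Image(g_*) = g_*\bigl(X \cdot \GHt_\delta(\GG_0)\bigr).
\]
When $X$ acts invertibly on the source (again arrangeable by stabilization), this yields $\Image(g_*) = (X+1) \Image(g_*) \subseteq \Ker(X+1)$.

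To upgrade the bound to equality, I would apply the analogous Clifford argument to the map $h_*$ appearing in the triangle; the required commutation relations for $h_*$ can be derived from the same degeneration arguments underlying Proposition~\ref{prop:basepoint-equivariance}. This yields $\rho_h^\delta \leq a_\delta \cdot 2^{n-l-1}$, and then exactness $r_1^\delta = \rho_g^\delta + \rho_h^\delta$ forces both inequalities to be equalities. The main obstacle is arranging the algebraic setup uniformly: one must ensure $X^2 = 1$ on the target and $X$ is invertible on the source, which may require a careful reduction via geometric stabilization, and one must verify the bound for $h_*$ by tracking the (slightly different) successor function relations for the opposite skein direction.
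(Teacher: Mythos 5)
Your overall plan rests on a Clifford-module argument that turns out to be self-contradictory, and the paper in fact proves this lemma by a much more elementary route that does not touch the Clifford structure at all.

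Concretely, you want the operator $X = z_1 + w_1$ to satisfy $X^2 = 1$ on the target $\GHt(\GG_1)$ (so that $\Ker(X+1)$ has half rank) \emph{and} to act invertibly on the source $\GHt(\GG_0)$; over $\FF_2$ invertibility of an odd Clifford element $X$ is exactly $X^2 = 1$. But combined with the equivariance relation $[g_*, X] = g_*$ of Proposition~\ref{prop:basepoint-equivariance}, these two requirements force $g_* = 0$. Writing $X_0$ and $X_1$ for the operator acting on source and target, the relation reads $X_1 g_* + g_* X_0 = g_*$, i.e.\ $X_1 g_* = g_*(1 + X_0)$. Iterating gives $X_1^2 g_* = g_*(1 + X_0)^2 = g_*(1 + X_0^2)$, and if $X_0^2 = X_1^2 = 1$ this collapses to $g_* = g_* + g_* = 0$. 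Equivalently, the identity $\Image(g_*) = (X_1+1)\Image(g_*)$ that you derive is already fatal: since $(X_1+1)^2 = 0$ whenever $X_1^2 = 1$, iterating it once more gives $\Image(g_*) = 0$. So the ``careful reduction via geometric stabilization'' you hope will arrange both conditions cannot exist. If instead one uses the genuine cross-term $X = w_1 + z_2$ suggested by the later parts of the paper, then $X^2 = 1$ on $\GHt(\GG_1)$ but $X^2 = 0$ on $\GHt(\GG_0)$ (the two basepoints lie on distinct components of $L_1 \cup L_2$), so $X$ is nilpotent on the source and the step $X \cdot \GHt(\GG_0) = \GHt(\GG_0)$ fails. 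Either way the bound $\Image(g_*) \subseteq \Ker(X_1 + 1)$ is not established.

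The paper's proof avoids all of this. It only needs the K\"unneth decompositions (your first paragraph, which is correct) plus exactness of the $\delta$-graded skein triangle. Because $\GHt_\delta(\GG_0)$, $\GHt_\delta(\GG_1)$, $\GHt_\delta(\GG_\infty)$ can be expressed in terms of $\rk\HFKh(L_1)\otimes\HFKh(L_2)$ and the fixed bigradings of $V$ and $\HFKh(\cU_2)$, one writes down an explicit downward induction on $\delta$, starting at $\delta \gg 0$ where everything vanishes: the inductive hypothesis at $\delta+1$ together with the exact sequence determines the rank of $\Coker(f_*)$ in grading $\delta$, hence the rank of $g_*$, and then in turn of $h_*$ and $f_*$. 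No extra algebraic structure is used; the Clifford module structure is reserved for the finer Lemma~\ref{lemma:virtually-surjective} and Lemma~\ref{lemma:double-rank} in Section~\ref{sec:HFK}, where the unlink hypothesis really is needed to force the relevant basepoint sums to vanish.
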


\begin{proof}
First, note that since
\[ \rk \GHt(\GG_{\infty}) = \rk \GHt(\GG_{1}) =\rk \GHt(\GG_{0})  = 2^{n-l} \cdot \rk \HFKh(L_1) \cdot \rk \HFKh(L_2)\]
exactness implies that the total ranks of $f_*,g_*,h_*$ are all equal to $2^{n-l-1} \cdot \rk\HFKh(L_1) \cdot \rk \HFKh(L_2)$.  The statement in the lemma refines this fact to the level of $\delta$ gradings.

To prove the lemma, we will prove the stronger result that for any $\delta \in \ZZ$, the maps
\begin{align*}
g_*: & \GHt_{\delta}(\GG_{0}) \rightarrow \GHt_{\delta}(\GG_{1})\\
h_*: & \GHt_{\delta}(\GG_{1}) \rightarrow \GHt_{\delta}(\GG_{\infty})\\
f_*: & \GHt_{\delta}(\GG_{\infty}) \rightarrow \GHt_{\delta-1}(\GG_{0})
\end{align*}
all have rank $\frac{1}{2} \rk \GHt_{\delta} (\GG_1)$.

First, the fact is clear for $\delta \gg 0$ since the knot Floer homology groups are bounded.  Thus 
\[\GHt_{\delta}(\GG_{\infty}) = \GHt_{\delta}(\GG_{0}) = \GHt_{\delta}(\GG_{1}) = 0\]
and the ranks of the maps are all equal to $\frac{1}{2}0 = 0$.  

Now, suppose that the statement is true for $\delta + 1$.  We have that
\begin{align*}
\rk \, \GHt_{\delta}(\GG_0) &= \frac{1}{2} \left( \rk \GHt_{\delta+1}(\GG_1) + \rk \GHt_{\delta}(\GG_1)\right) \\
\rk \, \GHt_{\delta}(\GG_1) &= \rk \GHt_{\delta}(\GG_1) \\
\rk \, \GHt_{\delta}(\GG_{\infty}) &= \rk \GHt_{\delta}(\GG_1)
\end{align*}

Thus, the summand of $\Coker(f_*)$ of grading $\delta$ has rank $\frac{1}{2} \rk \GHt_{\delta}(\GG_1)$ and by exactness, this is the rank of $g_*$ restricted to $\GHt_{\delta}(\GG_0)$.  Repeating this argument twice, we see that the same statement holds for $h_*$ and $f_*$.  Proceeding by induction, this proves the statement for all $\delta \in \ZZ$.
\end{proof}

\section{$\HFKh_{\delta}$ and mutation}
\label{sec:HFK}

\subsection{Setup}
\label{sub:mutation-HFKt}

Suppose that $L$ is the union of the 2--tangles $\cT_1,\cT_2$ and that $L'$ is obtained from $L$ by mutating $\cT_1$.  For any rational closure $C$ of $\cT_1$, Lemma \ref{lemma:Q-closure-standard} states that we can find diagrams for $L,L'$ in standard form so that the numerator closure $N(T_1)$ is $C$.  As in Section \ref{sec:KF}, let $\cL_{\infty,\infty}$ be the link obtained by connecting the diagrams $T_1$ and $T_2$ by bands with a single twist.  Resolving each of the two crossings give a collection of nine links $\{\cL_{\bullet,\circ}\}$ for $\bullet,\circ \in \{\infty,0,1\}$.  Let $l_{\bullet,\circ}$ denote the number of components of $\cL_{\bullet,\circ}$. We can approximate these link diagrams with grid diagrams to obtain a collection of nine grid diagrams $\{\GG_{\bullet,\circ}\}$ that agree except near the crossings and have the form required for the skein exact triangle.  Taking grid homology and considering the skein exact triangle, we obtain the commutative diagram of Figure \ref{fig:9-commutative-GHt} with each row and each column exact.

\begin{figure}
\[\xymatrix{& \ar[d] &\ar[d] & \ar[d] & \\ 
\ar[r] & \GHt(\GG_{0,0}) \ar[r]^{f_{0}} \ar[d]^{k_{0}} & \GHt(\GG_{0,1}) \ar[r] \ar[d]^{k_{1}} & \GHt(\GG_{0,\infty}) \ar[r] \ar[d] & \\
\ar[r] & \GHt(\GG_{1,0}) \ar[r]^{f_1} \ar[d] & \GHt(\GG_{1,1}) \ar[r] \ar[d] & \GHt(\GG_{1,\infty}) \ar[r] \ar[d] & \\ 
\ar[r] & \GHt(\GG_{\infty,0}) \ar[r] \ar[d] & \GHt(\GG_{\infty,1}) \ar[r] \ar[d] & \GHt(\GG_{\infty,\infty}) \ar[r] \ar[d] & 
\\ & & & & }\]
\caption{The commutative diagram of skein maps for $\GHt$ corresponding to the 9 links in Figure \ref{fig:9-mutation}.}
\label{fig:9-commutative-GHt}
\end{figure}

To prove Theorem \ref{thrm:HFK-mutation-invariance}, we will attempt to mimic the proof of Theorem \ref{thrm:Kh-mutation-invariance} in Section \ref{sec:Kh}.  As for Khovanov homology over $\ZZ/2\ZZ$, there is a graded isomorphism between $\GHt(\GG_{1,0})$ and $\GHt(\GG_{0,1})$ since they are connected sums of the same two pointed links. Mutation--invariance of $\HFKh_{\delta}$ will follow if $f_1$ and $k_1$ have the same $\delta$--graded ranks.    However, knot Floer homology and Khovanov homology over $\ZZ/2\ZZ$ behave differently with respect to merge map
\[\mu: L_1 \cup L_2 \rightarrow L_1 \# L_2\]
(compare Lemma \ref{lemma:half-rank} with Lemma \ref{lemma:Kh-disjoint-split}).  Given the assumption on rational closures, we can use the extra algebraic structure given by the basepoint maps to work around this fact.

\subsection{Basepoint structure}  We will review the relevant results of Sections \ref{sec:basepoint} and \ref{sec:skein} in light of the setup in the previous subsection.

First, we make some notational remarks.  If $l_{0,0} \equiv l_{1,1} \text{ mod }2$, then we can choose compatible orientations on $\cL_{0,0},\cL_{1,0},\cL_{0,1},\cL_{1,1}$ so that the elementary cobordisms among these four links are oriented.  The grid diagrams $\GG_{0,0},\GG_{1,0},\GG_{0,1},\GG_{1,1}$ are all realized on the same grid but different $\beta$--curves.  Thus, we can speak unambiguously about the basepoint collections $\XX$ and $\OO$ on all four grids.  Moreover, we will use the the same notation $\{w_i,z_j\}$ to denote the basepoint maps as operators on the four homology groups.  It should be pointed out, however, that the anticommutation relations among these maps are unique for each of the four homology groups.  

If $l_{0,0} = l_{1,1} + 1$, however, we will adopt the following convention.  Here, the cobordisms to $\cL_{1,1}$ are nonorientable.  The complication in this case is fundamentally a problem of {\it labeling} the basepoint maps and not their algebraic structure.  All four diagrams lie on the same grid with the same basepoints, except that we cannot take the same partition into $\XX$ and $\OO$ on all four diagrams.  Our convention is to fix an orientation on $\cL_{0,0}$, which determines a partition into $\XX$ and $\OO$ and a labeling of the basepoint maps as $z_i$ or $w_j$ accordingly, then keep this labeling on the remaining three homology groups.  For example, if $\OO_1$ on $\GG_{0,0}$ becomes an $\XX$-basepoint in $\GG_{1,1}$, we will still use $w_1$ to denote the map which counts disk that cross this point on the grid.

Second, let $w_T,z_T$ denote the sums of all $\w$ and $\z$ basepoint maps on $N(T_1)$, respectively.  Using our notation conventions, let it also denote the corresponding sum of the same basepoints on any of the 8 remaining $\GHt$ groups.  Note the the endomorphism $w_T + z_T$, as an operator on the ungraded $\GHt$ group, is independent of the choice of orientations.  Moreover, the combination of Lemma \ref{lemma:basepoints-unknot} and Proposition \ref{prop:basepoints-kunneth} implies that 
\begin{align}
(w_T + z_T) \cdot \GHt(\GG_{0,0}) = \{0\}
\end{align}
if $N(T_1)$ is the unlink.  

Third, we describe the Clifford module structures on the homology groups.  In particular, the skein maps affect the successor function so the various homology groups are modules over different Clifford algebras.  Below, we state explicitly the relevant commutation relations we will need. 

We can always choose orientations so that the elementary cobordism $\GG_{0,0} \rightarrow \GG_{1,0}$ is oriented.  Let $w_1,z_1$ denote the basepoints on $N(T_1)$ and let $w_2,z_2$ denote the basepoints on $N(T_2)$ nearest this handle attachment.  Let $w_3,z_3$ and $w_4,z_4$ denote the basepoints on $N(T_1)$ and $N(T_2)$, respectively, near the second handle attachment.  

After possibly stabilizing the grid diagram, we can assume that as operators on $\GHt(\GG_{0,0})$ the basepoint maps satisfy
\begin{align}
 [w_i,z_j] &= \delta_{i,j} & \text{for }&1 \leq i,j \leq 4
\end{align}
and furthermore that
\begin{align}
[w_T + z_T,w_i] &= [w_T + z_T,z_i] = 0 & \text{for all } i&=1,\dots,n
\end{align}

On $\cL_{1,0}$, the basepoints $w_1,z_2$ are consecutive.  As operators, the basepoint maps satisfy the following relations:
\begin{align}
\label{eq:comm-relation}
\begin{aligned}
 \leftbracket w_1,z_2] &= 1 & [w_T,w_1] &= 0 &[z_T,z_1] &= 0  \\
[w_T,z_2] &= 1 & [z_T,w_1] &= 1 &&\\
[w_T + z_T,w_1z_2] &= (w_1 + z_2)  &   [w_T + z_T,z_2w_1] &= (w_1 + z_2) &&
\end{aligned}
\end{align}

Next, we can choose an orientation on $\cL_{1,1}$ so that it agrees with the orientation on the segment of $\cL_{1,0}$ containing $w_1$ and $z_2$.   These are still consecutive so $[w_1,z_2] = 1$ when acting on $\GHt(\GG_{1,1})$.

We could also choose orientations so that the elementary cobordism $\cL_{0,0} \rightarrow \cL_{0,1}$ is oriented and partition the basepoints accordingly.  This will give the same relations as above for the basepoints $w_3,z_3,w_4,z_4$ instead.  Moreover, we can choose an orientation on $\cL_{1,1}$ to agree with the orientation of the segment of $\cL_{0,1}$ containing $w_3$ and $z_4$.

Finally, we state the important pseudoequivariance properties of the skein and basepoint maps.  First, exactly 1 $w$-- and 1 $z$--basepoint on $N(T_1)$ is adjacent to each of the 1--handle attachments.  Thus
\begin{align}
\label{eq:map-comm}
\leftbracket f_i,w_T + z_T] = f_i \qquad [k_i, w_T + z_T] = k_i \qquad \text{for }i=0,1
\end{align}
Secondly, the basepoints $w_1,z_2$ are away from the 1--handle that induces $f_0,f_1$.  Thus, the corresponding maps commute with the induced map on homology.  Similarly, the basepoints $w_3,z_4$ are away from the 1--handle that induces $k_0,k_1$.  
\begin{align}
\label{eq:comm-relation-2}
\leftbracket f_1,w_1] = [f_1,z_2] = 0 \qquad [k_1,w_3] = [k_1,z_4] = 0
\end{align}

\subsection{Virtual surjectivity}

While the merge maps $f_0,k_0$ are not surjective, they are sufficiently close to being surjective for the purposes of proving Theorem \ref{thrm:HFK-mutation-invariance}.  Specifically, their images generate the codomains over the appropriate basepoint algebra.

\begin{lemma}
\label{lemma:virtually-surjective}
Suppose that $N(T_1)$ is the unlink. 
Then 
\begin{align*}
\GHt(\cL_{1,0},\p) &= w_1z_2 \cdot \Image(k_0) + z_2w_1 \cdot \Image(k_0) \\
\GHt(\cL_{0,1},\p) &= w_3z_4 \cdot \Image(f_0) + z_4w_3 \cdot \Image(f_0)
\end{align*}
\end{lemma}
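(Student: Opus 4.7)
The two identities in Lemma~\ref{lemma:virtually-surjective} are symmetric, so I focus on the first; the second follows with $f_0$ and the pair $w_3,z_4$ in place of $k_0$ and $w_1,z_2$.  By Lemma~\ref{lemma:half-rank} one has $\dim_\FF \Image(k_0) = \tfrac{1}{2}\dim_\FF \GHt(\cL_{1,0})$.  By Lemma~\ref{lemma:omega-split} applied to the sub-Clifford algebra of $\Omega_\nu$ generated by $w_1,z_2$, which satisfy $[w_1,z_2]=1$ on $\cL_{1,0}$ by Equation~\eqref{eq:comm-relation}, the idempotents $w_1z_2$ and $z_2w_1$ split $\GHt(\cL_{1,0})$ into two summands each of dimension $\tfrac{1}{2}\dim\GHt(\cL_{1,0})$.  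Thus the identity is equivalent to asserting that both restrictions $w_1z_2|_{\Image(k_0)}$ and $z_2w_1|_{\Image(k_0)}$ are injective, since by the matched dimensions injectivity then upgrades to isomorphisms onto the respective summands.

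The next step localises $\Image(k_0)$ inside a distinguished subspace.  Because $N(T_1)$ is an unlink, Lemma~\ref{lemma:basepoints-unknot} and Proposition~\ref{prop:basepoints-kunneth} give $(w_T+z_T)\cdot\GHt(\GG_{0,0})=0$, and combining this with the pseudo-equivariance $[k_0,w_T+z_T]=k_0$ from Equation~\eqref{eq:map-comm} yields
\[
(w_T+z_T)\cdot y = y \qquad \text{for every } y \in \Image(k_0),
\]
so $\Image(k_0) \subset \Ker(\mathrm{Id}+\alpha)$, where $\alpha \coloneqq w_T+z_T$.

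The final step is a short algebraic verification that $w_1z_2$ is injective on $\Ker(\mathrm{Id}+\alpha)$.  Suppose $y = z_2w_1 v$ lies in $\Ker(\mathrm{Id}+\alpha)$; using Equation~\eqref{eq:comm-relation}, which provides the commutator $[\alpha,z_2w_1] = w_1+z_2$, the relation $\alpha y = y$ becomes
\[
z_2w_1\alpha v + (w_1+z_2)v = z_2w_1 v.
\]
Applying $z_2$ to both sides and using $z_2^2=0$ forces $z_2w_1 v = 0$, i.e., $y=0$.  Hence $w_1z_2|_{\Ker(\mathrm{Id}+\alpha)}$ is injective, and therefore so is its restriction to $\Image(k_0)$, which by the dimension match is an isomorphism onto $w_1z_2\cdot\GHt(\cL_{1,0})$.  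An entirely analogous computation, applying $w_1$ in place of $z_2$, proves the corresponding statement for $z_2w_1$, completing the proof.

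The main obstacle is ensuring that the commutator $[\alpha,z_2w_1] = w_1+z_2$ is available in exactly this form.  This is furnished by Equation~\eqref{eq:comm-relation} but relies on the careful basepoint labelling from Subsection~\ref{sub:mutation-HFKt}, and in particular on choosing a standard-form setup where $w_1$ and $z_2$ are consecutive on $\cL_{1,0}$ so that the sub-Clifford algebra generated by them is $\Omega_1$.
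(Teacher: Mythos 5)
Your proof is correct and takes essentially the same approach as the paper: both hinge on the vanishing of $w_T + z_T$ on $\GHt(\GG_{0,0})$, the pseudo-equivariance $[k_0, w_T + z_T] = k_0$, the rank count from Lemma~\ref{lemma:half-rank}, and the Clifford relations between $w_1$ and $z_2$. The only differences are cosmetic --- you first isolate $\Image(k_0)$ inside $\Ker(\mathrm{Id}+\alpha)$ and finish by applying $z_2$ and invoking $z_2^2 = 0$, whereas the paper manipulates $k_0(\x)$ directly and concludes from the invertibility of $w_1 + z_2$, which follows from $(w_1+z_2)^2 = [w_1,z_2] = 1$.
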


\begin{proof}
We will only prove the first statement; the second follows by an identical argument.   

The basepoints $w_1,z_2$ are consecutive in $\GG_{1,0}$, so Lemma \ref{lemma:omega-split} states that the orthogonal projections $w_1z_2$ and $z_2w_1$ determine a direct sum decomposition of $\GHt(\GG_{1,0})$.  Each summand has rank $\frac{1}{2} \rk \GHt(\GG_{1,0})$, which is also the rank of $\Image(k_0)$ by Lemma \ref{lemma:half-rank}.  This rank calculation implies that $w_1z_2$ and $z_2w_1$ are surjective if and only if they are injective.  The first equality of the lemma follows immediately if the projections $w_1z_2$ and $z_2w_1$, restricted to $\Image (k_0)$, are surjective.  As a result, it suffices to prove that if $w_1z_1 \cdot k_0(\x) = 0$ or $z_2w_1 \cdot k_0(\x) = 0$ for some $\x \in \GHt(\GG_{0,0})$ then $k_0(\x) = 0$.

Suppose that $w_1z_2 \cdot k_0(\x) = 0$.  Then applying the anticommutation relations we obtain:
\begin{align}
0 &= w_1 z_2 \cdot k_0(\x) \nonumber \\
&= (1 + z_T + w_T) \cdot w_1 z_2 \cdot k_0(\x) \nonumber \\
&= w_1 z_2 \cdot  (1 + z_T + w_T)\cdot k_0(\x) + (w_1 + z_2) \cdot k_0(\x) \nonumber \\
&= w_1z_2 \cdot k_0( (z_T + w_T)\cdot\x) + (w_1 + z_2) \cdot k_0(\x) \nonumber \\
&= (w_1 + z_2) \cdot k_0(\x)
\label{eq:2-vanish}
\end{align}

Equation \ref{eq:2-vanish} follows from the previous equality since $(w_T + z_T)$ is identically 0 as an operator on $\GHt(\GG_{0,0})$.  Since $w_1$ and $z_2$ are consecutive on $\GG_{1,0}$, we have that $(w_1 + z_2)^2 = [w_1,z_2] = 1$ and so the linear operator $(w_1 + z_2)$ is invertible and therefore injective.  Applying this to Equation \ref{eq:2-vanish} implies that $k_0(\x) = 0$.  A similar argument proves that if $z_2 w_1 \cdot k_0(\x) = 0$, then $k_0(\x) = 0$ as well.
\end{proof}

\begin{remark}
While Lemma \ref{lemma:half-rank} implies $f_0$ and $k_0$ are never honestly surjective, we can interpret Lemma \ref{lemma:virtually-surjective} as saying that $f_0$ and $k_0$ are `virtually' surjective in the following sense.  

Let $\Omega'$ denote the common subring of $\Omega_{\nu_{0,0}}$ and $\Omega_{\nu_{1,0}}$ generated by the elements $z_1 + z_2,w_1 + w_2,z_3,w_3,\dots,z_n,w_n$.  If we view $\GHt(\GG_{0,0})$ and $\GHt(\GG_{1,0})$ as $\Omega'$--modules by restriction of scalars, then it follows from Proposition \ref{prop:basepoint-equivariance} that $f_0$ is $\Omega'$-linear.

There is an identification $\Omega_{\nu_{1,0}} \cong \Omega' \otimes_{\FF} \Omega_1$.  By extending scalars, we can therefore view $\GHt(\GG_{0,0}) \otimes_{\Omega'} \Omega_1$ as an $\Omega_{\nu_{1,0}}$--module and extend $f_0$ to an $\Omega_{\nu_{1,0}}$--linear map
\[ f_0: \GHt(\GG_{0,0}) \otimes_{\Omega'} \Omega_1 \rightarrow \GHt(\GG_{1,0})\]
Lemma \ref{lemma:virtually-surjective} says that this is a surjective map of $\Omega_{\nu_{1,0}}$--modules.
\end{remark}

\subsection{Rank equivalence}  We can now finish the proof of Theorem \ref{thrm:HFK-mutation-invariance} by equating the graded ranks of $f_1$ and $k_1$.

The skein maps are homogeneous with respect to the $\delta$-grading, thus their images and kernels can be decomposed into summands that are homogeneous with respect to the $\delta$-grading.  For each $\delta \in \ZZ$, let $\Image_{\delta}$ and $\ker_{\delta}$ denote the summand in grading $\delta$.

\begin{lemma}
\label{lemma:double-rank}
Suppose that $N(T_1)$ is the unlink.  Then
\[
\rk \Image_{\delta} (f_1) = 2 \cdot \rk \Image_{\delta} (f_1 \circ k_0)  = 2 \cdot \rk \Image_{\delta} (k_1 \circ f_0) = \rk \Image_{\delta} (k_1)
\]
\end{lemma}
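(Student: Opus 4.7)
The plan is to prove $\rk\Image_\delta f_1 = 2\rk\Image_\delta(f_1\circ k_0)$ by combining Lemma \ref{lemma:virtually-surjective} with a key anticommutation identity between $w_T + z_T$ and the orthogonal projections $w_1z_2, z_2w_1$; the symmetric argument with $(w_3,z_4)$ in place of $(w_1,z_2)$ will give $\rk\Image_\delta k_1 = 2\rk\Image_\delta(k_1\circ f_0)$, and commutativity $f_1\circ k_0 = k_1\circ f_0$ will equate the two middle terms. Since the basepoint maps $w_i,z_j$, the projections $w_iz_j$, and the skein maps are all homogeneous with respect to the $\delta$-grading, the argument will proceed grade by grade.

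I would first use $[f_1,w_1]=[f_1,z_2]=0$ from Equation \ref{eq:comm-relation-2} together with Lemma \ref{lemma:virtually-surjective} to write
\[\Image f_1 \;=\; f_1\bigl(w_1z_2\cdot\Image k_0 + z_2w_1\cdot\Image k_0\bigr) \;=\; w_1z_2\cdot\Image(f_1\circ k_0) + z_2w_1\cdot\Image(f_1\circ k_0).\]
Because $[w_1,z_2]=1$ on $\GHt(\GG_{1,1})$, the maps $w_1z_2$ and $z_2w_1$ are complementary orthogonal projections by Lemma \ref{lemma:omega-split}, so this sum is direct. It therefore suffices to show that each projection restricts to an injection on $\Image(f_1\circ k_0)$, and this reduces to the single observation that $(w_T+z_T)$ acts as zero on $\Image(f_1\circ k_0)$.

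To establish this vanishing I would argue: $(w_T+z_T)|_{\GHt(\GG_{0,0})}=0$ by Lemma \ref{lemma:basepoints-unknot} (applied via Proposition \ref{prop:basepoints-kunneth} to $\cL_{0,0}=N(T_1)\cup N(T_2)$, using that $N(T_1)$ is the unlink); Equation \ref{eq:map-comm} then gives $(w_T+z_T)k_0\x = k_0\bigl((1+w_T+z_T)\x\bigr)=k_0\x$ for every $\x\in\GHt(\GG_{0,0})$; and a second application yields $(w_T+z_T)(f_1k_0\x) = f_1(k_0\x)+f_1(k_0\x)=0$ in characteristic $2$. Next, a direct commutator computation—identical to that yielding Equation \ref{eq:comm-relation}, using only that $w_1,z_2$ remain consecutive on $\cL_{1,1}$—shows $[w_T+z_T,w_1z_2]=w_1+z_2$ on $\GHt(\GG_{1,1})$. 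If $\y=f_1k_0\x$ satisfies $w_1z_2\,\y=0$, then
\[0 \;=\; (w_T+z_T)\cdot w_1z_2\,\y \;=\; w_1z_2(w_T+z_T)\y + (w_1+z_2)\y \;=\; (w_1+z_2)\y,\]
and since $(w_1+z_2)^2=[w_1,z_2]=1$ makes $w_1+z_2$ invertible, $\y=0$. The analogous computation with $z_2w_1$ proves its injectivity.

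Combining the direct sum decomposition with both injectivity statements gives $\rk\Image_\delta f_1 = 2\rk\Image_\delta(f_1\circ k_0)$. The mirror argument, invoking $[k_1,w_3]=[k_1,z_4]=0$, the second assertion of Lemma \ref{lemma:virtually-surjective}, and the consecutivity of $w_3,z_4$ on $\cL_{0,1}$ and $\cL_{1,1}$, yields $\rk\Image_\delta k_1 = 2\rk\Image_\delta(k_1\circ f_0)$; commutativity of the square identifies $\Image(f_1\circ k_0)$ with $\Image(k_1\circ f_0)$, closing the chain of equalities. The main obstacle I anticipate is the delicate interplay in the injectivity step, which hinges on three independent ingredients fitting together precisely: the vanishing of $w_T+z_T$ on $\Image(f_1\circ k_0)$ (requiring the unlink hypothesis), the commutator identity $[w_T+z_T,w_1z_2]=w_1+z_2$ on $\GHt(\GG_{1,1})$, and the invertibility of $w_1+z_2$.
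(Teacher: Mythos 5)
Your proof is correct and rests on the same key ingredients as the paper's --- Lemma \ref{lemma:virtually-surjective}, the vanishing of $w_T+z_T$ on $\GHt(\GG_{0,0})$ coming from the unlink hypothesis, the commutator identity $[w_T+z_T,w_1z_2]=w_1+z_2$, and the invertibility of $w_1+z_2$ --- but you organize the argument on the image side rather than the kernel side. The paper characterizes $\ker(f_1)$ as $w_1z_2\cdot K\oplus z_2w_1\cdot K$ with $K=\ker(f_1|_{\Image(k_0)})$, obtaining $\rk \ker(f_1) = 2\rk K$, and then converts this to the desired rank statement for images via rank--nullity (implicitly invoking Lemma \ref{lemma:half-rank} to get $\rk \GHt(\GG_{1,0})=2\rk \Image(k_0)$); you instead decompose $\Image(f_1)$ directly as $w_1z_2\cdot\Image(f_1\circ k_0)\oplus z_2w_1\cdot\Image(f_1\circ k_0)$, which yields $\rk\Image(f_1)=2\rk\Image(f_1\circ k_0)$ without the extra appeal to Lemma \ref{lemma:half-rank}. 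Relatedly, the paper's central anticommutation manipulation is carried out on $\GHt(\GG_{1,0})$ in one chain --- pulling $(w_T+z_T)$ through $f_1$, then $w_1z_2$, then $k_0$, down to where it vanishes --- whereas you first isolate the vanishing of $w_T+z_T$ on $\Image(f_1\circ k_0)$ as a separate fact and then perform the commutator manipulation on $\GHt(\GG_{1,1})$. This requires $[w_T+z_T,w_1z_2]=w_1+z_2$ to hold on $\GHt(\GG_{1,1})$ rather than on $\GHt(\GG_{1,0})$ where Equation \ref{eq:comm-relation} is stated; you correctly note that the identical derivation applies, since only the consecutiveness of $z_1,w_1,z_2$ is used and that configuration is unaffected by the second band. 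Both routes are valid; yours is marginally more direct and self-contained, while the paper's kernel-side formulation mirrors the way Lemma \ref{lemma:virtually-surjective} was proved.
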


\begin{proof}
The middle equality of the lemma follows from commutativity of the diagram. The first and third equalities have identical proofs.  We will thus focus on the first equality.  Moreover, the graded statement follows easily from the corresponding ungraded statement.  Thus we will ignore the $\delta$ gradings.

Let $K$ denote the kernel of $f_1$ restricted to $\Image (k_0)$.  From the proof of Lemma \ref{lemma:virtually-surjective}, we know that the projection maps $w_1z_2$ and $z_2w_1$ are injective.  Thus, the first equality of the lemma would follow if 
\begin{align}
\label{eq:kernel-condition}
\ker (f_1) = w_1 z_2 \cdot K \oplus z_2 w_1 \cdot K
\end{align}
Equation \ref{eq:kernel-condition} implies that $\rk \ker (f_1) = 2 \cdot \rk K$ and so the ranks of the images of $f_1$ and $f_1 \circ k_0$ must satisfy the same relation.

The maps $w_1,z_2$ commute with $f_1$, so the projections $w_1z_2$ and $z_2w_1$ determine orthogonal decompositions of both $\GHt(\GG_{1,0})$ and $\GHt(\GG_{1,1})$ that commute with $f_1$.  Consequently, if $f_1(k_0(\x)) = 0$, then
\begin{align*}
f_1(w_1z_2  \cdot k_0(\x))& = w_1z_2 \cdot f_1(k_0(\x)) = 0 \\
 f_1(z_2w_1 \cdot k_0(\x)) &= z_2 w_1 \cdot f_1(k_0(\x)) = 0
\end{align*}
This implies that $\ker(f_1)$ contains $w_1z_2 \cdot K \oplus z_2w_1 \cdot K$.  

To finish the proof of Equation \ref{eq:kernel-condition}, we need to show the reverse inclusion.  Take $\y \in \GHt(\GG_{1,0})$ and suppose $f(\y) = 0$.  By Lemma \ref{lemma:virtually-surjective}, we can express $\y$ as $\y = w_1z_2 \cdot k_0(\x_1) + z_2 w_1 \cdot k_0(\x_2)$ for some $\x_1,\x_2 \in \GHt(\GG_{0,0})$.  We need to show that we can assume $k_0(\x_1)$ and $k_0(\x_2)$ are contained in $K$.  By assumption, we have
\[0 = f_1(\y) = f_1(w_1z_2 \cdot k_0(\x_1) + z_2 w_1 \cdot k_0(\x_2)) = w_1z_2 \cdot f_1(k_0(\x_1)) + z_2w_1 \cdot f_1(k_0(\x))\]
The direct sum decomposition $\GHt(\GG_{1,1}) = w_1z_2 \GHt(\GG_{1,1}) \oplus z_2w_1 \GHt(\GG_{1,1})$ implies that 
\[f_1(w_1z_2 \cdot k_0(\x_1)) = 0 \qquad f_1(z_2 w_1 \cdot k_0(\x_2)) = 0\]

Since $f_1( w_1 z_2 \cdot k_0 (\x_1)) = 0$, we can apply the anticommutation relations to obtain
\begin{align}
0&= (w_T + z_T) \cdot f_1( w_1 z_2 \cdot k_0 (\x_1) ) \nonumber  \\
&= f_1\left( (1 + w_T + z_T) \cdot w_1 z_2 \cdot k_0 (\x_1) \right) \nonumber  \\
&= f_1( w_1z_2  \cdot (1 + w_T + z_T) \cdot k_0 (\x_1)  + f_1((w_1 + z_2) \cdot k_0 (\x_1)) \nonumber \\
&= f_1(w_1z_2 \cdot k_0( (w_T + z_T) \cdot \x_1) + (w_1 + z_2) \cdot f_1(k_0(\x_1))  \nonumber \\
&= (w_1 + z_2) \cdot f_1(k_0(\x_1))
\label{eq:double-vanish}
\end{align}
Again, the final equality follows from the previous line since $w_T + z_T$ is identically 0 on $\GHt(\GG_{0,0})$.  Since $w_1 + z_2$ is an invertible operator on $\GHt(\GG_{1,1})$, Equation \ref{eq:double-vanish} implies that $f_1(k_0(\x_1)) = 0$.  The same argument also proves that $f_1(k_0(\x_2)) = 0$.  This proves Equation \ref{eq:kernel-condition} and therefore the lemma.
\end{proof}

Combining the above two lemmata, we can prove Theorem \ref{thrm:HFK-mutation-invariance}.

\begin{proof}[Proof of Theorem \ref{thrm:HFK-mutation-invariance}]
Exactness of the skein triangle implies that
\begin{align*}
\GHt(\GG_{1,\infty}) &\cong \ker (f_1) [1] \oplus \Coker (f_1) \\ 
\GHt(\GG_{\infty,1}) &\cong \ker (k_1)[1] \oplus \Coker (k_1)
\end{align*}
There is a $\delta$--graded isomorphism between $\GHt(\GG_{1,0})$ and $\GHt(\GG_{0,1})$ since they are connected sums of the same two links.  Moreover, by Lemma \ref{lemma:double-rank} the $\delta$--graded ranks of $f_1$ and $k_1$ agree.  By the rank--nullity theorem, the $\delta$--graded ranks of $\ker (f_1)$ and $\ker (k_1)$ agree and similarly the $\delta$--graded ranks of $\Coker (f_1)$ and $\Coker (k_1)$ agree.  Thus, the $\delta$--graded ranks of $\GHt(\GG_{1,\infty})$ and $\GHt(\GG_{\infty,1})$ agree and there is a $\delta$--graded isomorphism of $\HFKh(\cL_{1,\infty})$ and $\HFKh(\cL_{\infty,1})$.
\end{proof}

\subsection{The Kinoshita-Terasaka family}
\label{sub:KT}

Kinoshita and Terasaka introduced a family $\{KT_{r,n}\}$ of knots with trivial Alexander polynomial \cite{KT}.  The knot $KT_{r,n}$ is obtained from the pretzel knot $P(-r,r+1,r,-r-1)$ by adding $n$ full twists.  See Figure \ref{fig:KT} for a diagram of $K_{3,2}$.  The knots are nontrivial for $n \neq 0$.  There is also a family $\{C_{r,n}\}$ of Conway mutants.  These are obtained instead from the pretzel knot $P(-r,r+1,-r-1,r)$ in a similar fashion.  There is a Conway sphere of $KT_{r,n}$ contain the $r$ and $-r-1$ twist regions and the knot $C_{r,n}$ is obtained by mutating the tangle inside this Conway sphere.  Let $T_r$ denote this tangle.  

For each $r,n \in \ZZ$ and $n \neq 0$, the knots $KT_{r,n}$ and $C_{r,n}$ are distinguished by their bigraded $\HFKh$ groups.

\begin{figure}[htpb!]
\centering
\includegraphics[width=.5\textwidth]{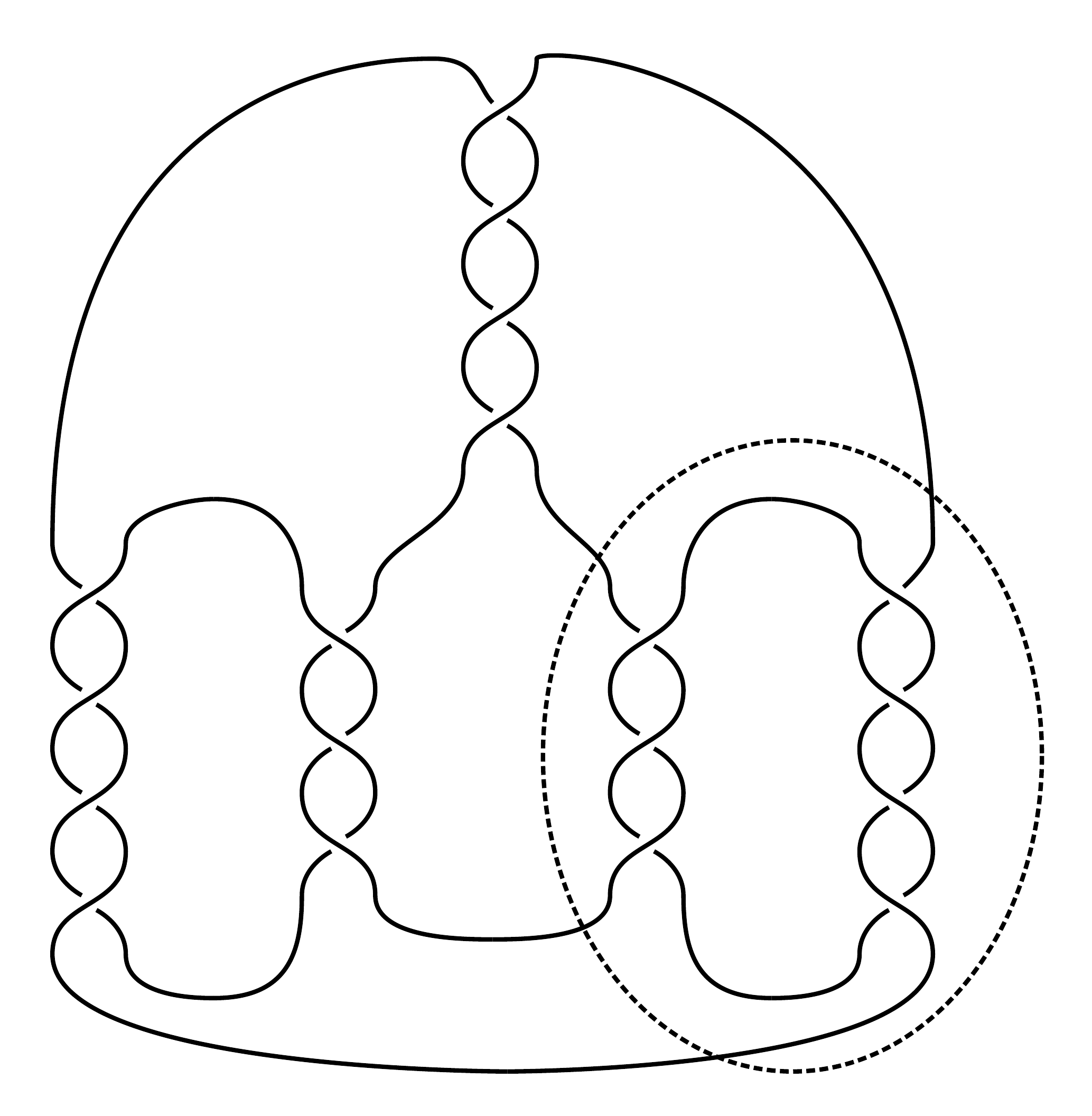}
\caption{The knot $KT_{3,2}$ with Conway sphere marked on the right.  The knot $C_{3,2}$ is obtained by mutating around the vertical axis in the page.}
\label{fig:KT}
\end{figure}

\begin{theorem}[\cite{OS-mutation}]
For $r,n \in \ZZ$ let $KT_{r,n}$ and $C_{r,n}$ denote the corresponding Kinoshita-Terasaka and Conway knots.
\begin{enumerate}
\item The bigraded knot Floer groups $\HFKh(KT_{r,n},s)$ vanish for $|s| > |r|$ and 
\[\HFKh(KT_{r,n},|r|) \cong \ZZ^{2n}\]
\item The bigraded knot Floer groups $\HFKh(C_{r,n},s)$ vanish for $|s| > 2|r|-1$ and 
\[\HFKh(C_{r,n},2|r|-1) \cong \ZZ^{2n}\]
\end{enumerate}
\end{theorem}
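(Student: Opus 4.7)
The plan is to establish the two statements by treating separately the vanishing of $\HFKh$ above a threshold Alexander grading and the rank computation at the threshold.

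For the vanishing, I would appeal to the Ozsv\'ath-Szab\'o theorem that knot Floer homology detects Seifert genus: the top nonvanishing Alexander grading of $\HFKh(L)$ equals $g(L)$. Combined with Gabai's computation that $g(KT_{r,n}) = |r|$ and $g(C_{r,n}) = 2|r|-1$ (for $n \neq 0$), this immediately yields $\HFKh(KT_{r,n},s) = 0$ for $|s| > |r|$ and $\HFKh(C_{r,n},s) = 0$ for $|s| > 2|r|-1$.

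To compute the top groups themselves, I would pass to the sutured picture: the top Alexander grading summand of $\HFKh(L)$ is isomorphic to the sutured Floer homology $SFH(S^3 \setminus \nu(\Sigma))$ of the complement of a minimal-genus Seifert surface $\Sigma$. For each family I would exhibit a minimal-genus Seifert surface adapted to the $n$--twist region (Gabai's analysis of taut foliations and sutured decompositions provides natural candidates), construct an explicit sutured Heegaard diagram, and enumerate the generators in the top grading. A complementary approach is to induct on $n$: a full twist consists of two crossings, and resolving one of them via the Manolescu-Ozsv\'ath unoriented skein exact triangle relates $\HFKh(KT_{r,n})$ to $\HFKh(KT_{r,n-1})$ and an intermediate link, reducing the problem to a base case (e.g.\ $n = 1$, the classical Kinoshita-Terasaka knot, whose knot Floer homology has been computed directly). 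The identical strategy, with the appropriate substitutions, applies to the Conway family $C_{r,n}$.

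The main obstacle will be pinning down the rank exactly as $\ZZ^{2n}$. Since the Alexander polynomial of $KT_{r,n}$ is trivial, Euler characteristic considerations constrain the rank at each nonzero Alexander grading only up to parity, not its precise value. Sharp control requires either a careful sutured analysis confirming that no differentials reduce the generating set in the top grading, or an inductive argument through the twist skein triangle that certifies exactly two new top-grading generators survive per inserted twist. Verifying this survival without spurious cancellation, while compatible with the bigraded structure in both the orientable and non-orientable cobordism cases that arise during the induction, is the delicate step.
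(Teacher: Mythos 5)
First, a point of orientation: the paper you are working from does not prove this statement at all --- it is quoted verbatim from Ozsv\'ath--Szab\'o \cite{OS-mutation} and used as a black box (its only role here is to certify that $KT_{r,n}$ and $C_{r,n}$ have non-isomorphic bigraded groups, so that Theorem \ref{thrm:KT-mutation} is not vacuous). So there is no internal proof to compare against; the relevant comparison is with \cite{OS-mutation} itself. There the logical flow is essentially the reverse of your first step: Ozsv\'ath and Szab\'o compute the top-degree groups directly, by skein-exact-sequence induction on the twist parameter $n$ (the route you list only as a ``complementary approach''), and then \emph{recover} Gabai's genus computation \cite{Gabai-genera} as a corollary via the genus-detection property, rather than feeding Gabai's theorem in as an input. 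Your use of Gabai plus genus detection for the vanishing statement is not circular and is fine as far as it goes, but it is a different (and weaker) division of labor.

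The genuine gap is that the half of the theorem carrying all the content --- $\HFKh(KT_{r,n},|r|) \cong \ZZ^{2n}$ and $\HFKh(C_{r,n},2|r|-1) \cong \ZZ^{2n}$ --- is never actually established in your proposal. What you offer is two candidate strategies (enumerate generators of a sutured Heegaard diagram for the Seifert surface complement; or induct on $n$ through the skein triangle) together with an explicit admission that the decisive step is unverified. Concretely: in the sutured approach you have not exhibited the minimal-genus Seifert surface, written down a diagram, or shown that no differentials cancel generators in the top grading; in the skein approach you have not identified the intermediate link appearing at a twist-region crossing, computed its top-degree group, analyzed the connecting maps well enough to certify that exactly two generators are added per full twist, or pinned down the base cases (including negative $n$, and the dependence on $r$). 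Euler-characteristic reasoning cannot close this, as you note, since the Alexander polynomial of $KT_{r,n}$ is trivial. There is also a coefficient mismatch to resolve if you pursue the sutured route: the statement is over $\ZZ$, while the surface-decomposition identification of the top Alexander summand with sutured Floer homology is most readily available over $\ZZ/2\ZZ$, so the $\ZZ^{2n}$ claim needs a sign-refined or separate argument. As written, the proposal is a plausible plan, not a proof.
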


The  mutation on $T_r$ by rotating around the horiztonal axis in the figure is trivial.  Thus the two remaining mutations both give $C_{r,n}$.  This mutation is often chosen to be the positive mutation, which is rotation around the axis perpendicular to the diagram.  However, to apply Theorem \ref{thrm:HFK-mutation-invariance}, we choose the equivalent mutation that is rotation around the vertical axis in the page.

\begin{lemma}
\label{lemma:KT-Q-closure}
The set of rational closures $\cC_{\tau}(T_r)$ corresponding to mutation around the vertical axis contains the unknot.
\end{lemma}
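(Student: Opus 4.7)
The plan is to exhibit an explicit rational closure of $T_r$ corresponding to $\tau$ and identify it with the unknot. Inspecting Figure~\ref{fig:KT}, the tangle $T_r$ is the $2$--tangle consisting of the two twist regions of $r$ and $-(r+1)$ crossings stacked horizontally inside the Conway sphere, together with the arcs of the ambient pretzel lying inside. Label the four boundary points $\mathrm{NW},\mathrm{NE},\mathrm{SW},\mathrm{SE}$ relative to the vertical axis in the page. The involution $\tau$ around the vertical axis swaps $\mathrm{NW}\leftrightarrow\mathrm{NE}$ and $\mathrm{SW}\leftrightarrow\mathrm{SE}$, so by the definition of $\cC_{\tau}(T)$ given in the introduction, a rational closure of $T_r$ lies in $\cC_{\tau}(T_r)$ precisely when the two arcs of the closing tangle $T_0$ connect $\mathrm{NW}$ to $\mathrm{NE}$ and $\mathrm{SW}$ to $\mathrm{SE}$ (possibly after inserting a rational tangle).

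First I would take the simplest such closure, the numerator closure obtained by joining $\mathrm{NW}$ to $\mathrm{NE}$ and $\mathrm{SW}$ to $\mathrm{SE}$ by two boundary--parallel unknotted arcs. This closure manifestly lies in $\cC_{\tau}(T_r)$. Second, I would observe that with the two twist regions arranged side by side in the horizontal direction, the tangle $T_r$ is precisely the Conway tangle sum of the integral rational tangles $[r/1]$ and $[-(r+1)/1]$; this is immediate from the planar picture in Figure~\ref{fig:KT}, once the Conway sphere is drawn so that its equator is horizontal.

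Third, I would invoke the Kauffman--Lambropoulou criterion recalled in the introduction: the numerator closure of the tangle sum $[p/q]+[a/b]$ is the unknot or the $2$--component unlink whenever $pb+qa\in\{-1,0,1\}$. With $(p,q,a,b)=(r,1,-(r+1),1)$ one computes
\[
pb+qa \;=\; r\cdot 1 + 1\cdot(-(r+1)) \;=\; -1,
\]
so the closure is the unknot or the $2$--component unlink. A quick component count from the diagram (the two arcs of $T_r$ and the two closing arcs together form a single component because the $r$ and $-(r+1)$ twist regions connect the same pair of strands) rules out the unlink, leaving the unknot.

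The only real subtlety is the bookkeeping in the first two steps, namely confirming that the horizontal closure genuinely corresponds to rotation about the vertical axis (as opposed to one of the other two mutation involutions) and that $T_r$ is indeed the claimed tangle sum once the Conway sphere is placed as in Figure~\ref{fig:KT}. Both are direct inspections of the planar diagram, so no deeper argument is required.
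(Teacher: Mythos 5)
Your proof is correct and, at its core, follows the same route as the paper: take the numerator closure (which indeed corresponds to the vertical--axis involution, since that closure joins the boundary points swapped by $\tau$), and observe that the $r$ and $-(r+1)$ twist parameters sum to $-1$. The paper does this in one line by recognizing the numerator closure directly as $T(2,\,r+(-r-1)) = T(2,-1)$, hence the unknot; you instead route through the Kauffman--Lambropoulou criterion, which is slightly heavier machinery but perfectly valid, and in fact with the value $c=-1$ the Schubert--theorem form of that criterion (as quoted in the paper) already forces the unknot, so your separate component count to exclude the $2$--component unlink is redundant. One small inaccuracy worth flagging: since $KT_{r,n}$ is built from the pretzel $P(-r,r+1,r,-r-1)$, the two twist regions inside the Conway sphere are \emph{vertical} twist regions, so $T_r$ is the pretzel tangle $\left[\frac{1}{r}\right]+\left[\frac{1}{-r-1}\right]$ rather than the sum of the integral (horizontal) tangles $[r]$ and $[-(r+1)]$ as you state; fortunately the Kauffman--Lambropoulou computation gives $ps+qr=-1$ under either identification, so your conclusion is unaffected, but the description of the tangle should be corrected.
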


\begin{proof}
Take the numerator closure of the tangle in Figure \ref{fig:KT}.  It is the knot $T(2,n)$ for $n = r + (-r-1) = -1$ and therefore it is the unknot.
\end{proof}

Combining Lemma \ref{lemma:KT-Q-closure} with Theorem \ref{thrm:HFK-mutation-invariance} proves Theorem \ref{thrm:KT-mutation} and so for all $r,n \in \ZZ$ there is an isomorphism 
\[\HFKh_{\delta}(KT_{r,n}) \cong \HFKh_{\delta}(C_{r,n})\]

\subsection{Low-crossing mutants}

Mutant cliques of 11-- and 12--crossing knots have been classified \cite{DeWit-Links,Stoimenow}.  Some cliques are composed of alternating knots, whose $\delta$--graded groups are determined by the determinant and signature.  These are mutation--invariant and thus the $\delta$--graded homology is invariant.  For each of the nonalternating cliques, the mutations can be achieved on one of a few tangles.

\begin{lemma}
\label{lemma:11-12}
Each nontrivial mutation of knots with crossing number $\leq 12$ can be obtained by a mutation on one of the 3 tangles in Figure \ref{fig:3-tangles}.
\end{lemma}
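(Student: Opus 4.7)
The statement is a finite, essentially combinatorial one: given the complete enumeration of 11- and 12-crossing mutant cliques due to De Wit-Links \cite{DeWit-Links} and Stoimenow \cite{Stoimenow}, we must verify one-by-one that every clique of nonalternating knots (the alternating case being handled by the determinant/signature argument, so ``nontrivial'' mutations are the nonalternating ones relevant to the downstream theorem) arises as a mutation on a Conway sphere bounding one of the three tangles depicted in Figure \ref{fig:3-tangles}. The plan is therefore a case check rather than a conceptual argument: load the known tables and verify each entry.

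More concretely, I would proceed as follows. First, collect from \cite{DeWit-Links} and \cite{Stoimenow} the explicit list of nonalternating mutant cliques at 11 and 12 crossings (the 10 nonalternating 11-crossing pairs, and the 43 pairs together with 3 triples of nonalternating 12-crossing knots recorded in the introduction). For each clique, fix a minimal crossing diagram $D$ of one representative from the KnotInfo / Knot Atlas tables. Then exhibit an explicit Conway sphere $S \subset S^3$ whose projection bounds a subdiagram realizing the specified mutation within the clique --- this can always be done in a minimal diagram because the De Wit-Links and Stoimenow tabulations list, for each clique, the exact Conway sphere producing the mutation. Finally, identify the tangle $(B^3,T)$ cut out by $S$ in $D$ up to tangle isotopy rel boundary, and check by inspection that $T$ is equivalent to one of the three tangles in Figure \ref{fig:3-tangles}.

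To keep the verification manageable, I would organize the cliques by the combinatorial type of the Conway sphere: mutations where the interior tangle has $\leq 2$ crossings (typically reducible to $T_0$ or a rational tangle), mutations where the interior tangle is itself a pretzel-like or Montesinos sub-tangle (matching the second tangle in Figure \ref{fig:3-tangles}), and the remaining cases (covered by the third tangle). For each of the three model tangles I would compute or cite its numerator closure and confirm it is an unlink, so that the hypothesis of Theorem \ref{thrm:HFK-mutation-invariance} is met. In particular, since each of these three numerator closures is an unlink, any clique whose mutation tangle is equivalent to one of the three is automatically covered by Theorem \ref{thrm:HFK-mutation-invariance}.

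The main obstacle is purely bookkeeping: for some of the 12-crossing cliques the minimal diagram may not display the mutating Conway sphere in an obvious way, and one may need an intermediate sequence of Reidemeister moves or a flype (as in Lemma \ref{lemma:standard}) to bring the tangle into the standard form matching one of the three pictured tangles. I would handle these by, for each questionable clique, producing the explicit diagrammatic isotopy that converts the tangle from its natural position in $D$ into one of the three models. Because the number of cliques is small and the target tangle list has only three entries, this case analysis terminates, and the lemma follows by exhaustion.
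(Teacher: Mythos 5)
Your proposal is correct and follows essentially the same route as the paper, whose proof is simply to check the claim by inspecting the minimal crossing diagrams of the tabulated mutant cliques in \cite{DeWit-Links}; your added organization and the remarks about unlinked numerator closures (which belong to Lemma \ref{lemma:U-closure}, not this one) are elaboration rather than a different method.
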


\begin{proof}
This can be checked by inspecting the minimal crossing diagrams for mutant cliques in \cite{DeWit-Links}.
\end{proof}

\begin{figure}[htpb!]
\centering
\includegraphics[width=.8\textwidth]{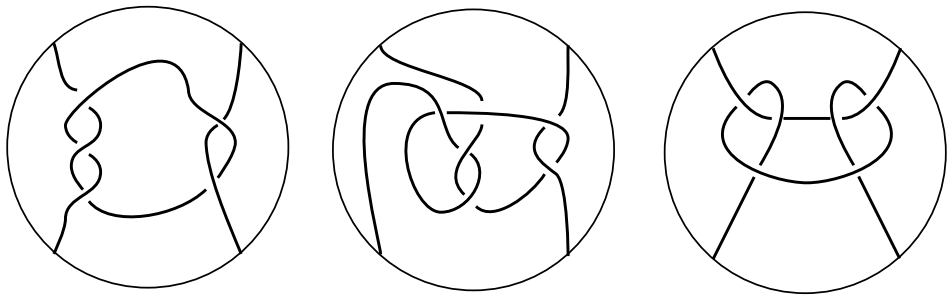}
\caption{Three tangles}
\label{fig:3-tangles}
\end{figure}

Note that each of the three tangles is the horizontal sum of two rational tangles.  Thus, rotating the sum around the horizontal axis is equal to rotating each rational tangle around the horizontal axis.  However, mutation on rational tangles does not change the isotopy class of a link.  Thus, mutation on the sum of rational tangles by rotating around the horizontal axis does not change the isotopy class either.  As a result, this implies that the remaining two mutations are identical up to isotopy.  Any mutation on the tangles of Figure \ref{fig:3-tangles} can be achieved by rotation around the vertical axis.

Taking the numerator closure, which corresponds to mutation around the vertical axis, of these tangles gives an unlink.  Kauffman and Lambropoulou give the following construction of unknots with complicated diagrams \cite[Theorem 5]{Kauffman-Lambro}.  Let $\left[\frac{p}{q}\right]$ and $\left[\frac{r}{s} \right]$ denote rational tangles determined by continued fraction expansions of $\frac{p}{q}$ and $\frac{r}{s}$, respectively.  Let $A = \left[ \frac{p}{q} \right] + \left[\frac{r}{s}\right]$ be their sum and let $L = N(A)$ be the numerator closure of $A$.  Set $c = ps + qr$ and $d = vs + ur$ where $|pu - qv| = 1$.  Then $L$ is isotopic to the numerator closure of $\left[\frac{c}{d} \right]$ and is therefore a 2-bridge link.  Note that $A$ is not a rational tangle, but its numerator closure is nonetheless a 2--bridge link.  Consequently, if $c = ps + qr = \pm 1$, then by Schubert's Theorem, $L$ is the unknot.  If $\frac{r}{s} = -\frac{p}{q}$, then $L$ is the unlink of 2 components.

\begin{lemma}
\label{lemma:U-closure}
Each of the tangles in Figure \ref{fig:3-tangles} has an unlinked rational closure.
\end{lemma}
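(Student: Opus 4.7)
The plan is to identify each of the three tangles in Figure \ref{fig:3-tangles} as a horizontal sum of two rational tangles $\left[\frac{p}{q}\right] + \left[\frac{r}{s}\right]$, and then apply the Kauffman-Lambropoulou criterion described in the preceding paragraph. The criterion says that the numerator closure of such a sum is the 2-bridge link associated to the fraction $\frac{c}{d}$, where $c = ps + qr$, so by Schubert's Theorem the closure is the unknot when $c = \pm 1$, and it is the 2-component unlink when $\frac{r}{s} = -\frac{p}{q}$ (which includes the degenerate case $c = 0$).

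First, I would read off the twist coefficients of the rational tangles from the three diagrams in Figure \ref{fig:3-tangles}, producing a pair $(\frac{p}{q}, \frac{r}{s})$ for each tangle. This is a direct matter of interpreting the standard continued fraction expansion associated to the twists shown in the picture. Second, I would substitute each pair into the formula $c = ps + qr$ and tabulate the result. For each of the three cases I expect to land on $c \in \{-1, 0, 1\}$, or equivalently on $\frac{r}{s} = -\frac{p}{q}$; in the latter case the numerator closure is the unlink of two components by the final sentence of the preceding paragraph, and in the former it is the unknot by Schubert's Theorem. Either way the numerator closure is unlinked.

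Finally, I would note that by the discussion immediately preceding the lemma statement, the numerator closure of each tangle is precisely a rational closure corresponding to mutation about the vertical axis, which is the relevant form of mutation for the tangles in Figure \ref{fig:3-tangles}. Hence the unlink produced in the previous step lies in the set $\mathcal{C}_{\tau}(T)$, verifying the lemma.

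The only real obstacle is bookkeeping: correctly reading off the rational tangle slopes from the figure, including getting the signs consistent with the orientation convention used by Kauffman and Lambropoulou. Once the slopes are written down, the verification of $ps + qr \in \{-1, 0, 1\}$ is a one-line arithmetic check for each of the three tangles, so no substantial additional machinery is required.
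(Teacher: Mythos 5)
Your proposal matches the paper's proof exactly: the paper identifies Tangle 1 as $[-\tfrac{1}{3}] + [\tfrac{1}{2}]$, Tangle 2 as $[-\tfrac{3}{5}] + [\tfrac{1}{2}]$, and Tangle 3 as $[\tfrac{2}{3}] + [-\tfrac{2}{3}]$, then invokes the Kauffman-Lambropoulou criterion to conclude the numerator closures are the unknot, unknot, and 2-component unlink, respectively. You have correctly identified the strategy and the only missing piece is the arithmetic check you already described.
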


\begin{proof}
This can be verified by inspecting a diagram for the numerator closures.  Moreover, the Kauffman-Lambropoulou result applies.  Tangle 1 is the sum of the rational tangles $[-\frac{1}{3}]$ and $[\frac{1}{2}]$; Tangle 2 is the sum of $[-\frac{3}{5}]$ and $[\frac{1}{2}]$; and Tangle 3 is the sum of $[\frac{2}{3}]$ and $[-\frac{2}{3}]$.  Thus, their numerator closures are the unknot, unknot and 2--component unlink, respectively.
\end{proof}

Theorem \ref{thrm:low-crossing} now follows by combining Lemmas \ref{lemma:11-12} and \ref{lemma:U-closure} with Theorem \ref{thrm:HFK-mutation-invariance}.

\section{Khovanov-Floer Theories}
\label{sec:KF}

\subsection{Khovanov-Floer theories}

Let $\Link$ denote the {\it link cobordism category}.  The objects of $\Link$ are oriented links in $S^3$ and the morphisms are isotopy classes of oriented link cobordims in $S^3 \times [0,1]$.  For a point $p \in S^3$, let $\Link_p$ denote the {\it based link cobordism category}.  The objects of $\Link_p$ are oriented links in $S^3$ containing $p$ and the morphisms are isotopy classes of oriented link cobordisms in $S^3 \times [0,1]$ containing the arc $p \times [0,1]$.

Let $\Diag$ denote the {\it diagrammatic link cobordism category}.  The objects of $\Diag$ are oriented link diagrams in $\RR^2$ and the morphisms are equivalence classes of movies.  A {\it movie} of oriented link diagrams is a family of link diagrams $D_t$ for $t \in [0,1] \setminus \{t_1,\dots,t_k\}$ such that (1) for $t \in (t_i,t_{i+1})$ the family $D_t$ is given by planar isotopy, and (2) for each $t_i$, the diagrams $D_{t_i-\epsilon}$ and $D_{t_i + \epsilon}$ are related by a Reidemeister mover or an elementary topological handle attachment.  For a point $p \in R^2$, let $\Diag_p$ denote the {\it based diagrammatic link cobordism category}, whose objects are oriented link diagrams containing the point $p$ and whose morphisms are movies of diagrams containing $p$.  See \cite{BHL-KF}.

Let $\FF = \ZZ/2\ZZ$ and let $\Vect_{\FF}$ denote the category of $\FF$--vector spaces and let $\Spect_{\FF}$ denote the category of spectral sequences with $\FF$--coefficients.  Recall that a spectral sequence is a sequence of chain complexes $ \{(E^i,d^i)\}_{i \geq i_0}$ satisfying $E^{i+1} \cong H_*(E^i,d^i)$ for all $i \geq i_0 + 1$.  A {\it morphism} of spectral sequences $F: \{(E^i,d^i)\}_{i \geq i_0} \rightarrow \{((E')^i,(d')^i)\}_{i \geq i_1}$ is a collection of chain maps
\[F_i: (E^i,d^i) \rightarrow \left((E')^i,(d')^i\right) \qquad \text{for }i \geq \text{ max}(i_0,i_1)\]
such that $F_{i+1} = (F_i)_*$.  For each $i \geq 0$, there is a forgetful functor $SV_i: \Spect_{\FF} \rightarrow \Vect_{\FF}$ that sends a spectral sequence to its $i^{\text{th}}$ page.

Khovanov homology determines a functor
\[ \Kh: \Diag \rightarrow \Vect_{\FF}\]
and reduced Khovanov homology determines a functor
\[\Khr: \Diag_p \rightarrow \Vect_{\FF}\]

Let $D$ be a link diagram.  A {\it $\Kh(D)$--complex} is a pair $ (C,q)$ consisting of
\begin{enumerate}
\item a $\ZZ$--filtered complex $C$, and
\item a graded vector space isomorphism $q: \Kh(D) \rightarrow E_2(C)$
\end{enumerate}

Let $D,D'$ be link diagrams and let $g: \Kh(D) \rightarrow \Kh(D')$ be a map of graded vector spaces that is homogeneous of degree $k$.  A chain map $f: (C,q) \rightarrow (C',q')$ of degree $k$ {\it agrees on $E^2$} with $g$ if the induced map
\[E^2(f): E^2(C) \rightarrow E^2(C')\]
satisfies $E^2(f) = q' \circ g \circ q^{-1}$ and so we have a commutative diagram
\[ \xymatrix{
E_2(C) \ar[r]^{E_2(f)} \ar[d]^{q^{-1}} & E_2(C) \\
\Kh(D) \ar[r]^{g} & \Kh(D') \ar[u]^{q'}
}\]

Two $\Kh(D)$--complexes $(C,q)$ and $(C',q')$ are {\it quasi--isomorphic} if there exists a degree 0 filtered chain map $f: C \rightarrow C'$ that agrees on $E^2$ with the identity map on $\Kh(D)$.  If $(C,q)$ and $(C',q')$ are quasi--isomorphic $\Kh(D)$-complexes, then there are canonical isomorphisms
\[I_r: (E^r(C),d^r) \rightarrow (E^r(C'), (d')^r) \qquad \text{for }r \geq 2\]
on all pages of the spectral sequence subsequent to $E^2$.  In addition, suppose $g: \Kh(D_1) \rightarrow \Kh(D_2)$ is a graded vector space map that is homogeneous of degree $k$.  Let $(C_1,q_1)$ and $(C'_1,q'_1)$ be quasi--isomorphic $\Kh(D_1)$--complexes and $(C_2,q_2)$ and $(C'_2,q'_2)$ be quasi--isomorphic $\Kh(D_2)$--complexes.  Suppose that $f: C_1 \rightarrow C_2$ and $f': C'_1 \rightarrow C'_2$ are filtered chain maps of degree $k$ that each agree on $E^2$ with $g$.  Then the induced maps $E^r(f)$ and $E^r(f')$ commute with the canonical isomorphisms for $r \geq 2$.  In particular, the map $f$ canonically determines a spectral sequence map that depends only on the quasi--isomorphism classes of $C_1$ and $C_2$.  

\begin{definition}[\cite{BHL-KF}]
\label{def:KFr}
A {\it Khovanov-Floer theory} $\cA$ is a rule that assigns to every link diagram $D$ a quasi--isomorphism class of $\Kh(D)$--complexes $\cA(D)$ such that
\begin{enumerate}
\item if $D$ and $D'$ are related by a planar isotopy, then there exists a morphism
\[\cA(D) \rightarrow \cA(D')\]
that agrees on $E^2$ with the induced map from $\Kh(D)$ to $\Kh(D')$,
\item if $D$ and $D'$ are related by a diagrammatic 1--handle attachment, then there exists a morphism
\[\cA(D) \rightarrow \cA(D')\]
that agrees on $E^2$ with the induced map from $\Kh(D)$ to $\Kh(D')$,
\item for any two link diagrams $D,D'$, there exists a morphism
\[\cA(D \cup D') \rightarrow \cA(D) \otimes \cA(D')\]
that agrees on $E^2$ with the standard isomorphism 
\[\Kh(D \cup D') \rightarrow \Kh(D) \otimes \Khr(D'), \]
\item for any diagram $D$ of the unlink $\cU_n$, $E^2(\cA(D)) = \dots = E^{\infty}(\cA(D))$.
\end{enumerate}
\end{definition}

A {\it reduced Khovanov-Floer theory} $\cA^r$ is defined similarly, except using $\Khr(D)$--complexes and Axiom (3) in Definition \ref{def:KFr} is replaced by a corresponding statement for connected sums instead of disjoint unions.

From the axioms in Definition \ref{def:KFr}, Baldwin, Hedden and Lobb prove that each page of the spectral sequence associated to Khovanov-Floer theory is a functorial link invariant.

\begin{theorem}[Baldwin-Hedden-Lobb \cite{BHL-KF}]
The spectral sequence associated to a Khovanov-Floer theory $\cA$ defines a functor
\[F_{\cA}: \Link \rightarrow \Spect_{\FF}\]
satisfying $\Kh = SV_2 \circ F_{\cA}$
\end{theorem}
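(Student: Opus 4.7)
My plan is to proceed in four steps: first establish that $\cA$ assigns a well-defined spectral sequence to each link (independent of diagram), then construct morphisms induced by elementary cobordism pieces, then verify functoriality by checking invariance under movie moves, and finally observe that the $E^2$--page recovers Khovanov homology by the very definition of a $\Kh(D)$--complex. Throughout, the guiding principle is that all the relevant data lives on $E^{\geq 2}$, where it is controlled by the isomorphisms $q\colon \Kh(D) \to E^2(\cA(D))$ built into the $\Kh(D)$--complex structure.

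For invariance on objects, I would express each Reidemeister move as a short composition of planar isotopies, diagrammatic $1$--handle attachments, and disjoint unions with a trivial diagram of the unknot. For example, Reidemeister II is realized as a birth followed by a saddle (or a pair of saddles), and Reidemeister I as a saddle paired with the death of a small unknot summand, where births and deaths are produced using Axiom (3) combined with Axiom (4) to identify $\cA$ of the unknot. By Axioms (1)--(3), each such move yields a filtered chain map $\cA(D) \to \cA(D')$ that, by design, agrees on $E^2$ with the induced Khovanov Reidemeister isomorphism. Since $E^2$ is an isomorphism, so is $E^r$ for every $r \geq 2$, and so the canonical isomorphisms between quasi--isomorphic $\Kh(D)$--complexes give a canonical isomorphism of spectral sequences from $E^2$ onward. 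For morphisms, I decompose a link cobordism $\Sigma \subset S^3 \times [0,1]$ via a generic Morse function into a finite sequence of births, saddles, and deaths, separated by ambient isotopies. Each such elementary modification lifts to a morphism at the level of $\Kh(D)$--complexes via Axioms (1)--(3) (using the unknot identification for births and deaths), and composing these yields a spectral sequence map that agrees on $E^2$ with Khovanov's TQFT map for $\Sigma$.

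The main obstacle is the third step: independence of the chosen movie. By the Carter--Saito theorem, any two movies representing the same isotopy class of cobordism are related by a finite sequence of \emph{movie moves}, so I would have to check that each movie move produces the same induced map on $E^r$ for all $r \geq 2$. The key technical reduction is that the quasi--isomorphism class of a $\Kh(D)$--complex is \emph{rigid}: any two filtered chain maps that agree on $E^2$ induce the same morphism on $E^{\geq 2}$. Thus each movie move verification reduces to checking that the two compositions of Khovanov maps on $E^2$ agree, which holds by functoriality of Khovanov homology (Jacobsson's theorem) over $\FF=\ZZ/2\ZZ$, where all sign ambiguities disappear. I would enumerate the Carter--Saito movie moves and, for each, apply this reduction; the potentially delicate cases are the nontrivial moves involving Reidemeister III, where one must be careful that the concrete saddle--by--saddle presentation chosen in step (1) is compatible with the Khovanov identities at the $E^2$ level.

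Finally, the compatibility $\Kh = SV_2 \circ F_\cA$ follows tautologically: on objects, the isomorphism $q\colon \Kh(D) \to E^2(\cA(D))$ is part of the data defining $\cA(D)$, and on morphisms, the chain maps constructed above agree on $E^2$ with the Khovanov cobordism maps by Axioms (1)--(3). The hardest single piece is confirming that the saddle presentation of each Reidemeister move interacts correctly with the movie moves; everything else is a formal consequence of the rigidity of $\Kh(D)$--complexes together with the existence axioms for $\cA$.
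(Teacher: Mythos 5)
First, note that this paper does not prove the statement at all: it is imported verbatim from \cite{BHL-KF}, so the only comparison available is with the Baldwin--Hedden--Lobb argument itself. The parts of your plan concerning well-definedness and movie moves do match the skeleton of that argument and the formal setup recorded in Section \ref{sec:KF}: the rigidity principle that two filtered chain maps agreeing on $E^2$ with the same map of Khovanov groups induce identical maps on all pages $r \geq 2$, together with Jacobsson/Bar-Natan functoriality of $\Kh$ over $\ZZ/2\ZZ$ and the Carter--Saito movie-move theorem, is exactly how independence of the movie presentation is handled, and the identity $\Kh = SV_2 \circ F_{\cA}$ is indeed tautological from the data $q\colon \Kh(D)\to E^2(\cA(D))$.

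The genuine gap is your step (1), which is where essentially all of the real content of \cite{BHL-KF} lives. An elementary diagrammatic $1$--handle attachment takes place in a disk disjoint from the rest of the diagram, and births/deaths add or remove crossingless circles; none of these moves, nor planar isotopy, can create or destroy a crossing. Consequently no Reidemeister move can be written as a composition of planar isotopies, births, saddles and deaths: a birth followed by a saddle along an embedded band cannot produce the two new crossings of an R2 move, and pinching off an R1 kink by a saddle leaves behind a one--crossing diagram of the unknot, which a death cannot remove. So the decompositions you propose do not exist, and with them goes the construction of the morphisms $\cA(D)\to\cA(D')$ associated to Reidemeister moves, which must instead be built indirectly from Axioms (1)--(4) --- e.g.\ splitting off unknot factors via Axiom (3), using the degeneration of Axiom (4) and the quasi-isomorphism statement \cite[Lemma 2.1]{BHL-KF} to invert maps on pages $\geq 2$ --- with the R3 move requiring a separate and more delicate argument. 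Relatedly, birth and death maps are not themselves axioms, and Axiom (3) points out of the disjoint union, so it does not directly supply a birth morphism; these too must be constructed before your steps (2)--(4) have anything to apply to. Until the Reidemeister (and $0$-- and $2$--handle) morphisms are produced from the axioms, the Carter--Saito/rigidity machinery, correct as it is, does not yield the functor.
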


If $\cA$ is a Khovanov-Floer theory, we denote $i^{\text{th}}$ link invariant associated to $\cA$ by $\cA_i\coloneqq SV_i \circ F_{\cA}$.

Often, a Khovanov-Floer theory also satisfies an unoriented skein exact sequence.  Let $L$ be a link with diagram $D$.  For a fixed crossing in $D$, we obtain two links $L_0$ and $L_1$ by taking the 0--resolution and 1--resolution of $L$ at the chosen crossing.  The three links $(L,L_1,L_0)$ are related by elementary 1--handle attachments.  There are three corresponding elementary cobordisms
\[f: L \rightarrow L_0 \qquad g: L_0 \rightarrow L_1 \qquad h: L_1 \rightarrow L\]
A functor $\cA: \Link \rightarrow \Vect_{\FF}$ satisfies an unoriented skein exact sequence if the triangle
\[\xymatrix{
\cA(L) \ar[rr]^{\cA(f)} & & \cA(L_0) \ar[dl]^{\cA(g)} \\
& \cA(L_1) \ar[ul]^{\cA(h)} & 
}\]
is exact for every triple $(L,L_0,L_1)$.

\subsection{Extended Khovanov-Floer theories}

Recall from Subsection \ref{sub:intro-KF} the notion of extended Khovanov-Floer theory.  An {\it extended Khovanov-Floer theory} is a pair $\cA,\cA^r$ consisting of an unreduced and reduced Khovanov-Floer theories, respectively, satisfying the following 3 extra axioms:
\begin{enumerate}
\item $\cA(L) = \cA^r(L \cup U,p)$ for a basepoint $p$ on the unknot component $U$,
\item $\cA$ and $\cA^r$ satisfy unoriented skein exact triangles, and
\item up to isomorphism, $\cA^r(L,p)$ is independent of the component containing $p$.
\end{enumerate}

Basepoint independence implies that every extended Khovanov-Floer theory satisfies a Kunneth--type principle for arbitrary connected sums.
\begin{lemma}
\label{lemma:KF-Kunneth}
Let $\cA,\cA^r$ be an extended Khovanov-Floer theory.  Then for any pair of oriented links the invariants satisfy 
\begin{align*}
\cA^r(L_1 \# L_2) & \cong \cA^r(L_1) \otimes \cA^r(L_2) \\
\cA(L_1 \# L_2) \otimes \cA(U) &\cong \cA(L_1) \otimes \cA(L_2) 
\end{align*}
for any choice of connected sum.
\end{lemma}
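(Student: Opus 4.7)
The plan is to mirror the proof of Lemma \ref{lemma:Kh-Kunneth} from the Khovanov setting. The key step is to first establish the extended-Khovanov-Floer analog of Proposition \ref{prop:basepoint-independence}, namely the auxiliary identity $\cA(L) \cong \cA^r(L) \otimes \cA(U)$ for every link $L$.

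To prove this, I would observe that for any link $L$ the disjoint union $L \cup U$ can be realized as the connected sum $L \# U_2$ obtained by summing $L$ with one component of the two-component unlink $U_2 = U \cup U$. By axiom (1) of an extended Khovanov-Floer theory, $\cA(L) \cong \cA^r(L \cup U, p)$ for $p$ on the unknot component, and by axiom (3) the basepoint can be transported to any convenient location. The reduced connected-sum Kunneth morphism built into the definition of a reduced Khovanov-Floer theory (the reduced analog of axiom (3) of Definition \ref{def:KFr}) then supplies a filtered chain map
\[
\cA^r(L \# U_2) \longrightarrow \cA^r(L) \otimes \cA^r(U_2)
\]
that agrees on $E^2$ with the Khovanov Kunneth isomorphism. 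Since a filtered chain map inducing an iso on $E^r$ commutes with $d^r$ on both sides and hence induces an iso on $E^{r+1} = H_*(E^r,d^r)$, induction starting at $r=2$ gives an iso on every subsequent page; consequently $\cA^r(L \cup U) \cong \cA^r(L) \otimes \cA^r(U_2)$. Setting $L = U$ in axiom (1) gives $\cA(U) \cong \cA^r(U_2)$, and the two identifications combine to yield the auxiliary identity.

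The first claim of the lemma, $\cA^r(L_1 \# L_2) \cong \cA^r(L_1) \otimes \cA^r(L_2)$, is now immediate from the same reduced Kunneth morphism and iso-persistence argument, with axiom (3) ensuring the statement holds for any choice of connected-sum location. For the unreduced statement, applying the auxiliary identity to $L_1$, $L_2$, and $L_1 \# L_2$, together with the reduced Kunneth formula, gives
\begin{align*}
\cA(L_1) \otimes \cA(L_2) &\cong \cA^r(L_1) \otimes \cA(U) \otimes \cA^r(L_2) \otimes \cA(U) \\
&\cong \cA^r(L_1 \# L_2) \otimes \cA(U) \otimes \cA(U) \\
&\cong \cA(L_1 \# L_2) \otimes \cA(U).
\end{align*}

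The main obstacle is that the Kunneth morphism supplied by the Khovanov-Floer axioms is only guaranteed to agree with the Khovanov Kunneth isomorphism on the $E^2$ page, not to be an iso on the final invariant $\cA^r(L)$ or $\cA(L)$. This is handled by the page-by-page iso-persistence argument outlined above, which depends essentially on naturality of the spectral sequence differentials and is the one nontrivial input beyond bookkeeping with the three extended-KF axioms.
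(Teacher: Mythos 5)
Your proof is correct and takes essentially the same route as the paper: the reduced Kunneth formula is established by observing that the $E^2$-agreeing Kunneth morphism supplied by the reduced axiom (3) must be an isomorphism on every subsequent page (the paper cites \cite[Lemma 2.1]{BHL-KF} for this, which is precisely the fact your page-by-page iso-persistence argument reproves inline), and the unreduced statement and independence of the choice of connected sum are deduced from $\cA(L) \cong \cA^r(L \cup U, p)$ together with basepoint independence. Your auxiliary identity $\cA(L) \cong \cA^r(L) \otimes \cA(U)$ simply makes explicit the step the paper compresses into ``the corresponding statement for $\cA$ follows since $\cA^r$ determines the unreduced theory.''
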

\begin{proof}
We will prove the lemma first for the reduced theory $\cA^r$.  The corresponding statement for $\cA$ follows since $\cA^r$ determines the unreduced theory.  Choose simultaneous diagrams $D_1,D_2$ for $(L_1,p_1)$ and $(L_2,p_2)$ so that there is an arc $a$ from $p_1$ to $p_2$ in the plane disjoint from the projections of $L_1$ and $L_2$.  Orient $L_1$ and $L_2$ so that the 1-handle attachement along $a$ is oriented.  Let $D$ be the corresponding diagram for $L_1 \# L_2$ with a single basepoint $p$ on the merged component.  Then axiom (3) for a reduced Khovanov-Floer theory states that there is a morphism
\[m: \cA^r(D,p) \rightarrow \cA^r(D_1,p_1) \otimes \cA^r(D_2,p_2)\]
that agrees on $E^2$ with the corresponding isomorphism for Khovanov homology.  Then \cite[Lemma 2.1]{BHL-KF} implies that this morphism is in fact an isomorphism.  Thus $\cA^r(L_1 \# L_2,p) = \cA^r(L_1,p) \otimes \cA^r(L_2,p_2)$.  A different choice of connected sum corresponds to different choices of $p_1,p_2$.  However, $\cA^r(L_1)$ and $\cA^r(L_2)$ are independent of the basepoint choices and so $\cA^r(L_1 \# L_2)$ is independent of the choice of connected sum.
\end{proof}

In order to establish mutation invariance, we make use of the skein exact triangle applied to the familiar triple
\[ L_1 \# -L_2 \qquad L_1 \cup L_2 \qquad  L_1 \# L_2\]
for a pair of oriented links $L_1,L_2$.  The following lemma, analogous to Lemma \ref{lemma:Kh-disjoint-split} for Khovanov homology over $\ZZ/2\ZZ$, follows by an identical argument.

\begin{lemma}
\label{lemma:KF-disjoint-split}
Let $\cA,\cA^r$ be an extended Khovanov-Floer theory.  Fix a pair of oriented links $L_1,L_2$ and an arc $a$ from $L_1$ to $L_2$.  Then the merge maps corresponding to an elementary 1--handle attachment along $a$
\begin{align*}
\mu^r_a: &\cA^r(L_1 \cup L_2) \rightarrow \cA^r(L_1 \# L_2) & 
\mu_a:&\cA(L_1 \cup L_2)  \rightarrow \cA(L_1 \# L_2)
\end{align*}
are surjective.
\end{lemma}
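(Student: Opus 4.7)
The plan is to translate the proof of Lemma \ref{lemma:Kh-disjoint-split} into the axiomatic setting of an extended Khovanov-Floer theory. First, applying axiom (2) to a crossing of the standard connect-sum diagram produces a skein exact triangle
\[\xymatrix{
\cA(L_1 \# L_2) \ar[rr]^{\beta} && \cA(L_1 \# -L_2) \ar[dl]^{\sigma} \\
& \cA(L_1 \cup L_2) \ar[lu]^{\mu_a} & }\]
for $\cA$, and an identical triangle for $\cA^r$. The entire lemma reduces to showing that $\beta = 0$, which I will achieve by a total-dimension count.

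Next I would compute $\dim \cA(L_1 \cup L_2)$ by upgrading the axiom (3) morphism $\cA(L_1 \cup L_2) \to \cA(L_1) \otimes \cA(L_2)$ to an isomorphism via \cite[Lemma 2.1]{BHL-KF}, exactly as in the proof of Lemma \ref{lemma:KF-Kunneth}. Combining with the connect-sum formula of that lemma, and using that $\dim \cA(U) = 2$ (which follows from axiom (4) applied to any diagram of the unknot, since $\dim \Kh(U) = 2$), one obtains $\dim \cA(L_1 \cup L_2) = 2 \dim \cA(L_1 \# L_2)$. The analogous computation with $-L_2$ gives $\dim \cA(L_1 \# -L_2) = \dim \cA(L_1 \# L_2)$, and therefore
\[\dim \cA(L_1 \cup L_2) = \dim \cA(L_1 \# L_2) + \dim \cA(L_1 \# -L_2).\]
This is the extremal case for the triangle inequality; hence $\beta = 0$, the triangle unfolds to a short exact sequence, and $\mu_a$ is surjective.

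For the reduced version, I would use the decomposition $L_1 \cup L_2 = L_1 \# U_2 \# L_2$ and iterate the reduced connect-sum isomorphism of Lemma \ref{lemma:KF-Kunneth} to obtain $\cA^r(L_1 \cup L_2) \cong \cA^r(L_1) \otimes \cA^r(U_2) \otimes \cA^r(L_2)$. Axiom (1) of the extended theory identifies $\cA^r(U_2) \cong \cA(U)$, which has dimension $2$, so the same rank identity $\dim \cA^r(L_1 \cup L_2) = \dim \cA^r(L_1 \# L_2) + \dim \cA^r(L_1 \# -L_2)$ holds, and the extremality argument again forces the analogue of $\beta$ to vanish, yielding surjectivity of $\mu^r_a$.

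The main subtle point I expect to need is orientation-independence of total dimension, i.e., $\dim \cA(L_2) = \dim \cA(-L_2)$ and likewise for $\cA^r$; without this, the relation $\dim \cA(L_1 \# -L_2) = \dim \cA(L_1 \# L_2)$ would not be immediate. This should be automatic because the filtered chain complex underlying any Khovanov-Floer theory is built from the unoriented cube of resolutions of the diagram, so reversing a link orientation merely shifts bigradings and does not alter total rank. Granted this point, the argument is a line-by-line translation of the $\ZZ/2\ZZ$ Khovanov case.
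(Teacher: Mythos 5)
Your proposal is correct and follows essentially the same route the paper takes. The paper's own ``proof'' of this lemma is a one-line remark that it follows by an argument identical to Lemma~\ref{lemma:Kh-disjoint-split}, and your write-up spells out exactly that translation: use axiom~(2) to get the skein triangle, use the disjoint-union axiom plus \cite[Lemma 2.1]{BHL-KF} and Lemma~\ref{lemma:KF-Kunneth} to equate total dimensions, observe the extremal case of the triangle inequality, and handle the reduced theory via $L_1 \cup L_2 \sim L_1 \# U_2 \# L_2$ together with extended axiom~(1) giving $\dim \cA^r(U_2) = \dim \cA(U) = 2$ (which you correctly derive from Khovanov-Floer axiom~(4)).

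The one point worth dwelling on is the orientation-reversal issue you flag yourself: the extremality argument needs $\dim \cA(L_1 \# -L_2) = \dim \cA(L_1 \# L_2)$, and via Lemma~\ref{lemma:KF-Kunneth} this reduces to $\dim \cA(-L_2) = \dim \cA(L_2)$. Your justification (``the filtered chain complex is built from the unoriented cube of resolutions'') is a statement about specific realizations of Khovanov-Floer theories, not something that follows formally from the axioms as stated in the paper: the axiomatic framework is defined on \emph{oriented} diagrams and nothing in Definition~\ref{def:KFr} or the extended axioms explicitly asserts orientation-independence of total rank. That said, this gap is shared with the paper (the same step occurs silently in the chain of isomorphisms in Lemma~\ref{lemma:Kh-disjoint-split}, where it is justified by orientation-independence of $\Kh$ over $\ZZ/2\ZZ$), and it does hold for every Khovanov-Floer theory the paper actually considers, so it is fair to treat it as an implicit background fact rather than a flaw in your argument.
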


Using the topological results from Subsection \ref{sub:mutation}, we can now prove that every extended Khovanov-Floer theory is mutation--invariant.

\begin{proof}[Proof of Theorem \ref{thrm:KF-mutation-invariance}]

According to Lemma \ref{lemma:standard}, we can choose diagrams for $L,L'$ in standard form.   Take the 9 links in Figure \ref{fig:9-mutation} obtained by the various resolutions of $\cL_{\infty,\infty}$.  From the elementary pointed cobordisms of the skein exact triangle, we obtain a commutative diagram where each row and column is exact.

Exactness implies that 
\begin{align*}
\rk \cA(\cL_{1,\infty}) &= \rk \cA(\cL_{1,0}) + \rk \cA(\cL_{1,1})) - 2 \cdot \rk \Image (f_1) \\
\rk \cA(\cL_{\infty,1}) &= \rk \cA(\cL_{0,1}) + \rk  \cA(\cL_{1,1}))- 2 \cdot \rk \Image (k_1)
\end{align*}
However, by Lemma \ref{lemma:KF-Kunneth}, we have that
\[\cA(\cL_{1,0}) \cong \cA(\cL_{0,1})\]
and so they have the same rank.  Moreover, $k_0$ and $f_0$ are surjective by Lemma \ref{lemma:KF-disjoint-split} and commutativity implies that $f_1 \circ k_0 = k_1 \circ f_0$.  Thus
\[\Image (f_1) = \Image (f_1 \circ k_0) = \Image (k_1 \circ f_0) = \Image (k_1)\]
Consequently, $\rk \cA(\cL_{1,\infty}) = \rk \cA(\cL_{\infty,1})$ and thus $\cA(\cL_{1,\infty})$ and $\cA(\cL_{\infty,1})$ are isomorphic.
\end{proof}

Finally, the proofs of Lemma \ref{lemma:Hopf-QQ-surjective}, Theorem \ref{thrm:Kh-QQ-mutation-invariance} and Theorem \ref{thrm:KT-mutation-Khovanov} can be repeated {\it mutatis mutandis} by replacing Khovanov homology with singular instanton homology.  This proves Theorem \ref{thrm:SIH-mutation}.

\section{Discussion}

The geometric arguments in Section \ref{sec:HFK} imply that Conjecture \ref{conj:mutation-invariance} will follow if there is a homotopy equivalence
\[\Cone( f_1: \CFKt(\cL_{1,0}) \rightarrow \CFKt(\cL_{1,1})) \sim \Cone( k_1: \CFKt(\cL_{0,1}) \rightarrow \CFKt(\cL_{1,1}))\]
Using the Clifford module structure, we approximate this by equating the ranks of the induced maps on homology when the mutated tangle is sufficiently simple.  Extending this result to arbitrary tangles would completely prove the conjecture.

However, the extra hypothesis of Theorem \ref{thrm:HFK-mutation-invariance} may be geometrically relevant.  First, as Theorem \ref{thrm:low-crossing} indicates, the condition on tangle closures explains some but not all of the computational evidence for Conjecture \ref{conj:mutation-invariance}.  Most low--crossing tangles can be closed off to the unlink and so most low--crossing mutant pairs should satisfy the hypotheses of Theorem \ref{thrm:HFK-mutation-invariance}.  Thus, Theorem \ref{thrm:low-crossing} may be viewed as minor inclupatory evidence against the conjecture.

Secondly, Zibrowius has shown a stronger result that positive mutations on the $(3,-2)$--pretzel tangle, the first tangle in Figure \ref{fig:3-tangles}, preserves bigraded $\HFKh$ \cite{Zibrowius}.  Specifically, if the tangle appears in a link with both strands oriented upwards, then mutating around the $y$-axis preserves not just the $\delta$--graded invariant (which is guaranteed by Theorem \ref{thrm:HFK-mutation-invariance}) but the full bigraded invariant.  However, the $(3,-2)$--pretzel tangle is abstractly diffeomorphic to the $(3,2)$-pretzel tangle.  If this latter tangle appears in a link with both strands oriented upwards, as it does in the Kinoshita-Terasaka knot, then mutating around the $y$-axis {\it does not} preserve bigraded $\HFKh$.  One speculative explanation is that the numerator closure of the $(3,-2)$--pretzel is the unknot while the numerator closure of the $(3,2)$--pretzel is the right--handed cinquefoil $T(2,5)$.  Based on this observation, we conjecture a stronger version of Theorem \ref{thrm:HFK-mutation-invariance}.

\begin{conjecture}
Let $L,L',T$ be mutant links and a tangle satisfying the hypotheses of Theorem \ref{thrm:HFK-mutation-invariance}.  If the mutation is positive, then there is a bigraded isomorphism
\[\HFKh(L) \cong \HFKh(L')\]
\end{conjecture}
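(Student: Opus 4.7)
The plan is to refine the argument of Theorem \ref{thrm:HFK-mutation-invariance} from a $\delta$--graded rank equality to a full bigraded one, working with the same nine--link commutative diagram in Figure \ref{fig:9-commutative-GHt} and the same Clifford--module manipulations. Positivity of the mutation will enter in two essential places, both tied to the fact, noted in Lemma \ref{lemma:pos-neg-x}, that for a positive mutation one can orient the two mutation bands so that the induced orientations on $T_1$ and $T_2$ are the same in all nine corners $\cL_{\bullet,\circ}$.

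First I would fix orientations so that every elementary saddle in Figure \ref{fig:9-commutative-GHt} is orientable and so that the induced orientation on each of $T_1, T_2$ is common to all nine corners. This places the Alexander gradings of the nine groups in a single ambient scheme and, via Proposition \ref{prop:basepoints-kunneth}, upgrades the isomorphism $\GHt(\GG_{1,0}) \cong \GHt(\GG_{0,1})$ to a bigraded isomorphism of Clifford modules. I would then rerun the proofs of Lemma \ref{lemma:virtually-surjective} and Lemma \ref{lemma:double-rank} keeping track of the bigrading. The algebraic inputs --- that $w_1 z_2$ and $z_2 w_1$ are orthogonal projections, that $w_1 + z_2$ is invertible on $\GHt(\GG_{1,1})$, and that the central operator $w_T + z_T$ vanishes on $\GHt(\GG_{0,0})$ because $N(T_1)$ is the unlink --- are each bigraded homogeneous, so the chain of rank identities
\[\rk \Image_{m,a}(f_1) = 2 \cdot \rk \Image_{m,a}(f_1 \circ k_0) = 2 \cdot \rk \Image_{m,a}(k_1 \circ f_0) = \rk \Image_{m,a}(k_1)\]
will fall out in each bidegree $(m,a)$ as soon as the skein maps $f_0, f_1, k_0, k_1$ are themselves known to be bihomogeneous of a fixed, computable bidegree.

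The hard part is precisely this last hypothesis. The Manolescu--Ozsv{\'a}th unoriented skein triangle and Wong's pentagon and triangle refinement that underlie Section \ref{sec:skein} are developed only at the $\delta$--graded level; the chain maps $\cP$ and $\cT$ are not a priori homogeneous with respect to the Alexander grading. In the positive setting, however, the three links in any local skein triple sit inside a common global orientation, so one expects $\cP$ and $\cT$ to be bihomogeneous with a bidegree shift that can be read off from the local basepoint configuration in Figure \ref{fig:skein-resolutions}. Establishing this bigraded refinement of Wong's construction --- or, alternatively, producing a chain--level equivalence $\Cone(f_1) \simeq \Cone(k_1)$ respecting the Alexander filtration --- is the principal obstacle. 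Once in hand, the resulting bigraded exactness combines with the previous two steps to give $\HFKh(L) \cong \HFKh(L')$ as bigraded groups by exactly the extraction argument used at the end of the proof of Theorem \ref{thrm:HFK-mutation-invariance}.
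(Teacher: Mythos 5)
This statement is not a theorem of the paper: it is stated as an open conjecture (motivated by Zibrowius's computation for the $(3,-2)$--pretzel tangle), and the paper offers no proof of it. Your proposal does not close that gap; by your own admission the ``principal obstacle'' --- that the skein maps $\cP,\cT$, hence $f_0,f_1,k_0,k_1$, be homogeneous with respect to the full $(M,A)$--bigrading --- is left unestablished, so what you have is a strategy outline, not a proof. Worse, the specific mechanism you propose for overcoming it is not available. An unoriented skein triple cannot be coherently oriented: at the crossing being resolved, at most one of the two resolutions is compatible with any orientation of $\cL_{\infty,\infty}$, the number of link components changes, and in the nine--link diagram some of the saddles are forced to be nonorientable (the paper's setup in Section \ref{sec:HFK} explicitly handles the case $l_{0,0}=l_{1,1}+1$ with relabeled basepoints for exactly this reason). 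Positivity of the mutation, via Lemma \ref{lemma:pos-neg-x}, only gives compatible orientations on the mutant pair $\cL_{1,\infty},\cL_{\infty,1}$ and matching crossing signs for certain pairs of corners; it does not put all nine links $\cL_{\bullet,\circ}$ into ``a single ambient scheme'' of Alexander gradings. This is precisely why Manolescu, Manolescu--Ozsv{\'a}th, and Wong obtain only a $\delta$--graded triangle (note the grading shifts in Wong's triangle depend on writhes and component numbers, a symptom of the absence of any consistent Alexander normalization across the triple), and why Theorem \ref{thrm:HFK-mutation-invariance} is intrinsically a $\delta$--graded statement.

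Even if one granted some bigraded refinement of the skein maps, the endgame would still need the bidegree shifts of $f_1$ and $k_1$ to coincide and the isomorphism $\GHt(\GG_{1,0})\cong\GHt(\GG_{0,1})$ to be bigraded in a way compatible with those shifts; Lemma \ref{lemma:pos-neg-x} controls crossing signs, not Alexander-grading shifts of saddle cobordisms, so additional input would be required there as well. In short, the conjecture remains open, and the route you sketch founders exactly where the paper says the difficulty lies: the unoriented skein exact triangle does not see the Alexander grading, and no choice of orientations forced by positivity of the mutation repairs this.
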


In a different direction, the key fact necessary to prove Theorem \ref{thrm:HFK-mutation-invariance} is that all basepoint maps vanish on $\HFKh(\cU_k)$.  Theorem \ref{thrm:HFK-mutation-invariance} can be extended to tangles $T$ where the basepoint maps vanish on $\HFKh(C(T))$ for some rational closure $C(T)$.  The basepoint maps will vanish on $\HFKh(C(T))$ if the knot $C(T)$ has no length-1 differentials in its $\CFK^{\infty}$ complex.   However, we know of no knots beside the unknot which have this property.

\begin{question}
Does there exist a nontrivial knot $K$ such that $\CFK^{\infty}(K)$ has no length-1 differentials?
\end{question}

Combining \cite[Theorem 1.2]{OS-L-space} with \cite[Corollary 9]{HW-GB} proves that no nontrivial L-space knot has this property.  Also, as pointed out to the author by Jen Hom, there exist knots --- for example $T_{4,5} \# -T_{2,3;2,5}$ --- whose $\CFK^{\infty}$ complex has a {\it direct summand} with no length-1 differentials \cite{Hom}.

\bibliographystyle{alpha}
\bibliography{References}

\begin{thebibliography}{MOST07}

\bibitem[AP04]{Asaeda-Przytycki}
Marta~M. Asaeda and J{\'o}zef~H. Przytycki.
\newblock Khovanov homology: torsion and thickness.
\newblock In {\em Advances in topological quantum field theory}, volume 179 of
  {\em NATO Sci. Ser. II Math. Phys. Chem.}, pages 135--166. Kluwer Acad.
  Publ., Dordrecht, 2004.

\bibitem[Bal11]{Baldwin-2cover}
John~A. Baldwin.
\newblock On the spectral sequence from {K}hovanov homology to {H}eegaard
  {F}loer homology.
\newblock {\em Int. Math. Res. Not. IMRN}, (15):3426--3470, 2011.

\bibitem[BG12]{BG-computations}
John~A. Baldwin and William~D. Gillam.
\newblock Computations of {H}eegaard-{F}loer knot homology.
\newblock {\em J. Knot Theory Ramifications}, 21(8):1250075, 65, 2012.

\bibitem[BHL]{BHL-KF}
John~A. Baldwin, Matthew Hedden, and Andrew Lobb.
\newblock On the functoriality of {K}hovanov-{F}loer theories.

\bibitem[BL12]{BL-spanning}
John~A. Baldwin and Adam~Simon Levine.
\newblock A combinatorial spanning tree model for knot {F}loer homology.
\newblock {\em Adv. Math.}, 231(3-4):1886--1939, 2012.

\bibitem[Blo10]{Bloom-Odd-Khovanov}
Jonathan~M. Bloom.
\newblock Odd {K}hovanov homology is mutation invariant.
\newblock {\em Math. Res. Lett.}, 17(1):1--10, 2010.

\bibitem[BLS]{BLS}
John~A. Baldwin, Adam~Simon Levine, and Sucharit Sarkar.
\newblock Khovanov homology and knot {F}loer homology for pointed links.

\bibitem[BN05]{BN-Kh}
Dror Bar-Natan.
\newblock Khovanov's homology for tangles and cobordisms.
\newblock {\em Geom. Topol.}, 9:1443--1499, 2005.

\bibitem[BVVV13]{BVV}
John~A. Baldwin, David~Shea Vela-Vick, and Vera V{\'e}rtesi.
\newblock On the equivalence of {L}egendrian and transverse invariants in knot
  {F}loer homology.
\newblock {\em Geom. Topol.}, 17(2):925--974, 2013.

\bibitem[DWL07]{DeWit-Links}
David De~Wit and Jon Links.
\newblock Where the {L}inks-{G}ould invariant first fails to distinguish
  nonmutant prime knots.
\newblock {\em J. Knot Theory Ramifications}, 16(8):1021--1041, 2007.

\bibitem[Gab86]{Gabai-genera}
David Gabai.
\newblock Genera of the arborescent links.
\newblock {\em Mem. Amer. Math. Soc.}, 59(339):i--viii and 1--98, 1986.

\bibitem[Gre12]{Greene-alternating}
Joshua~Evan Greene.
\newblock Conway mutation and alternating links.
\newblock In {\em Proceedings of the {G}\"okova {G}eometry-{T}opology
  {C}onference 2011}, pages 31--41. Int. Press, Somerville, MA, 2012.

\bibitem[HN13]{Hedden-Ni}
Matthew Hedden and Yi~Ni.
\newblock Khovanov module and the detection of unlinks.
\newblock {\em Geom. Topol.}, 17(5):3027--3076, 2013.

\bibitem[Hom16]{Hom}
Jennifer Hom.
\newblock A note on the concordance invariants epsilon and upsilon.
\newblock {\em Proc. Amer. Math. Soc.}, 144(2):897--902, 2016.

\bibitem[HW]{HW-GB}
Matthew Hedden and Liam Watson.
\newblock On the geography and botany problem of knot floer homology.

\bibitem[JM]{Juhazs-Marengon}
Andr{\'a}s Juh{\'a}sz and Marco Marengon.
\newblock Computing cobordism maps in link {F}loer homology and the reduced
  {K}hovanov {TQFT}.

\bibitem[Kho00]{Khovanov-Jones}
Mikhail Khovanov.
\newblock A categorification of the {J}ones polynomial.
\newblock {\em Duke Math. J.}, 101(3):359--426, 2000.

\bibitem[KL12]{Kauffman-Lambro}
Louis~H. Kauffman and Sofia Lambropoulou.
\newblock Hard unknots and collapsing tangles.
\newblock In {\em Introductory lectures on knot theory}, volume~46 of {\em Ser.
  Knots Everything}, pages 187--247. World Sci. Publ., Hackensack, NJ, 2012.

\bibitem[KM11a]{KM-Kh}
P.~B. Kronheimer and T.~S. Mrowka.
\newblock Khovanov homology is an unknot-detector.
\newblock {\em Publ. Math. Inst. Hautes \'Etudes Sci.}, (113):97--208, 2011.

\bibitem[KM11b]{KM-SIH}
P.~B. Kronheimer and T.~S. Mrowka.
\newblock Knot homology groups from instantons.
\newblock {\em J. Topol.}, 4(4):835--918, 2011.

\bibitem[KT57]{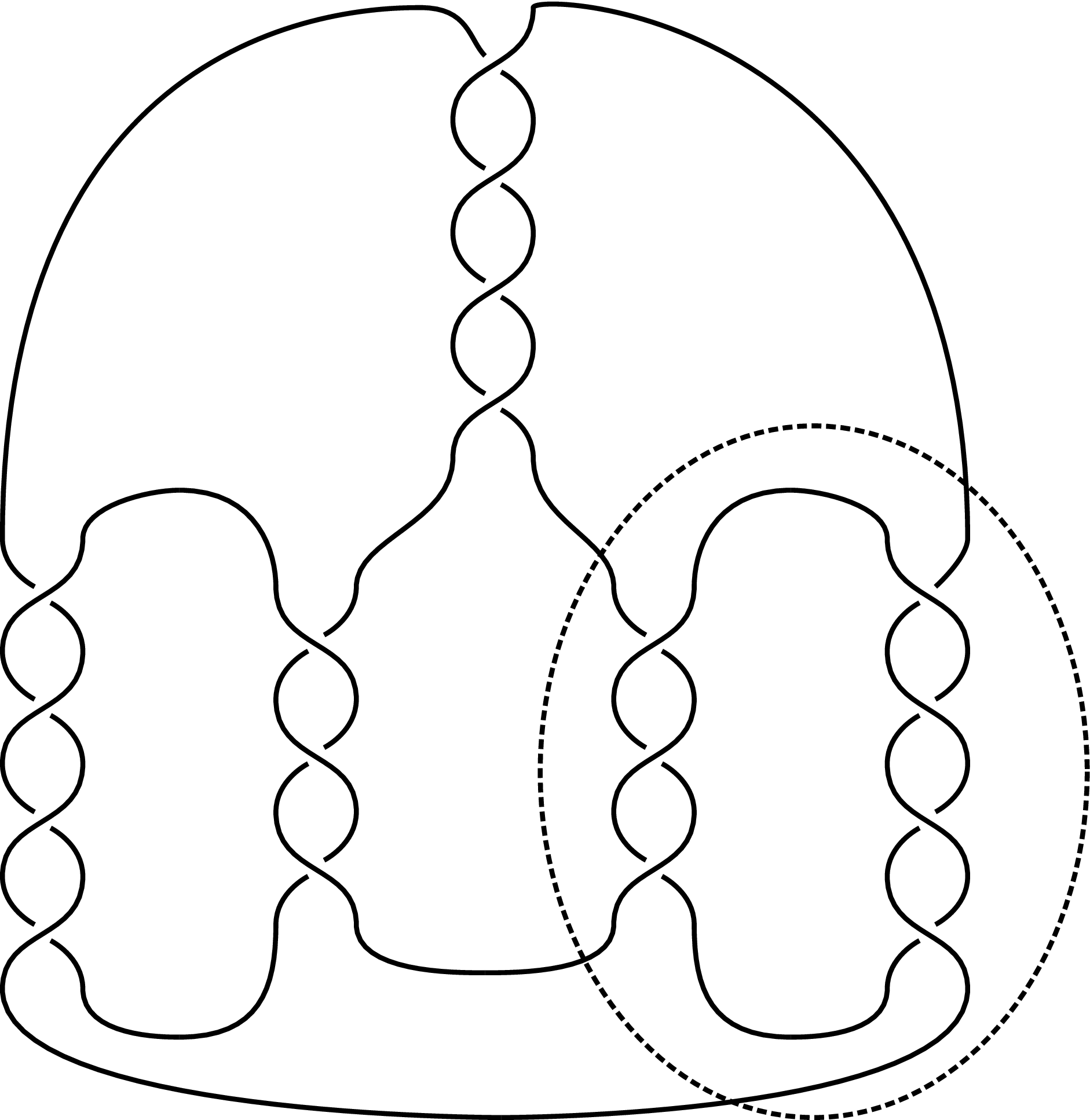}
Shin'ichi Kinoshita and Hidetaka Terasaka.
\newblock On unions of knots.
\newblock {\em Osaka Math. J.}, 9:131--153, 1957.

\bibitem[{Lam}16]{PLC}
P.~{Lambert-Cole}.
\newblock {Twisting, mutation and knot Floer homology}.
\newblock {\em ArXiv e-prints}, August 2016.

\bibitem[Man07]{Manolescu-skein}
Ciprian Manolescu.
\newblock An unoriented skein exact triangle for knot {F}loer homology.
\newblock {\em Math. Res. Lett.}, 14(5):839--852, 2007.

\bibitem[MO08]{MO-QA}
Ciprian Manolescu and Peter Ozsv{\'a}th.
\newblock On the {K}hovanov and knot {F}loer homologies of quasi-alternating
  links.
\newblock In {\em Proceedings of {G}\"okova {G}eometry-{T}opology {C}onference
  2007}, pages 60--81. G\"okova Geometry/Topology Conference (GGT), G\"okova,
  2008.

\bibitem[MOS09]{MOS}
Ciprian Manolescu, Peter Ozsv{\'a}th, and Sucharit Sarkar.
\newblock A combinatorial description of knot {F}loer homology.
\newblock {\em Ann. of Math. (2)}, 169(2):633--660, 2009.

\bibitem[MOST07]{MOST}
Ciprian Manolescu, Peter Ozsv{\'a}th, Zolt{\'a}n Szab{\'o}, and Dylan Thurston.
\newblock On combinatorial link {F}loer homology.
\newblock {\em Geom. Topol.}, 11:2339--2412, 2007.

\bibitem[MS15]{Moore-Starkston}
Allison~H. Moore and Laura Starkston.
\newblock Genus-two mutant knots with the same dimension in knot {F}loer and
  {K}hovanov homologies.
\newblock {\em Algebr. Geom. Topol.}, 15(1):43--63, 2015.

\bibitem[Ni14]{Ni}
Yi~Ni.
\newblock Homological actions on sutured {F}loer homology.
\newblock {\em Math. Res. Lett.}, 21(5):1177--1197, 2014.

\bibitem[OS]{OS-skein}
Peter~S. Ozsv{\'a}th and Zolt{\'a}n Szab{\'o}.
\newblock On the skein exact sequence for knot {F}loer homology.

\bibitem[OS03]{OS-alternating}
Peter Ozsv{\'a}th and Zolt{\'a}n Szab{\'o}.
\newblock Heegaard {F}loer homology and alternating knots.
\newblock {\em Geom. Topol.}, 7:225--254 (electronic), 2003.

\bibitem[OS04a]{OS-HFK}
Peter Ozsv{\'a}th and Zolt{\'a}n Szab{\'o}.
\newblock Holomorphic disks and knot invariants.
\newblock {\em Adv. Math.}, 186(1):58--116, 2004.

\bibitem[OS04b]{OS-HF}
Peter Ozsv{\'a}th and Zolt{\'a}n Szab{\'o}.
\newblock Holomorphic disks and topological invariants for closed
  three-manifolds.
\newblock {\em Ann. of Math. (2)}, 159(3):1027--1158, 2004.

\bibitem[OS04c]{OS-mutation}
Peter Ozsv{\'a}th and Zolt{\'a}n Szab{\'o}.
\newblock Knot {F}loer homology, genus bounds, and mutation.
\newblock {\em Topology Appl.}, 141(1-3):59--85, 2004.

\bibitem[OS05a]{OS-L-space}
Peter Ozsv{\'a}th and Zolt{\'a}n Szab{\'o}.
\newblock On knot {F}loer homology and lens space surgeries.
\newblock {\em Topology}, 44(6):1281--1300, 2005.

\bibitem[OS05b]{OS-2cover}
Peter Ozsv{\'a}th and Zolt{\'a}n Szab{\'o}.
\newblock On the {H}eegaard {F}loer homology of branched double-covers.
\newblock {\em Adv. Math.}, 194(1):1--33, 2005.

\bibitem[OS08]{OS-HFL}
Peter Ozsv{\'a}th and Zolt{\'a}n Szab{\'o}.
\newblock Holomorphic disks, link invariants and the multi-variable {A}lexander
  polynomial.
\newblock {\em Algebr. Geom. Topol.}, 8(2):615--692, 2008.

\bibitem[OSS15]{OSS-book}
Peter~S. Ozsv{\'a}th, Andr{\'a}s~I. Stipsicz, and Zolt{\'a}n Szab{\'o}.
\newblock {\em Grid homology for knots and links}, volume 208 of {\em
  Mathematical Surveys and Monographs}.
\newblock American Mathematical Society, Providence, RI, 2015.

\bibitem[Ras10]{Rasmussen-S}
Jacob Rasmussen.
\newblock Khovanov homology and the slice genus.
\newblock {\em Invent. Math.}, 182(2):419--447, 2010.

\bibitem[Sar11]{Sarkar-tau}
Sucharit Sarkar.
\newblock Grid diagrams and the {O}zsv\'ath-{S}zab\'o tau-invariant.
\newblock {\em Math. Res. Lett.}, 18(6):1239--1257, 2011.

\bibitem[Sar15]{Sarkar-basepoints}
Sucharit Sarkar.
\newblock Moving basepoints and the induced automorphisms of link {F}loer
  homology.
\newblock {\em Algebr. Geom. Topol.}, 15(5):2479--2515, 2015.

\bibitem[{See}11]{Seed-comp}
C.~{Seed}.
\newblock {Computations of Szab$\backslash$'o's Geometric Spectral Sequence in
  Khovanov Homology}.
\newblock {\em ArXiv e-prints}, October 2011.

\bibitem[Shu14]{Shumakovitch}
Alexander~N. Shumakovitch.
\newblock Torsion of {K}hovanov homology.
\newblock {\em Fund. Math.}, 225(1):343--364, 2014.

\bibitem[Sto10]{Stoimenow}
A.~Stoimenow.
\newblock Tabulating and distinguishing mutants.
\newblock {\em Internat. J. Algebra Comput.}, 20(4):525--559, 2010.

\bibitem[Sza15]{Szabo-GSS}
Zolt{\'a}n Szab{\'o}.
\newblock A geometric spectral sequence in {K}hovanov homology.
\newblock {\em J. Topol.}, 8(4):1017--1044, 2015.

\bibitem[Vir76]{Viro}
O.~Ja. Viro.
\newblock Nonprojecting isotopies and knots with homeomorphic coverings.
\newblock {\em Zap. Nau\v cn. Sem. Leningrad. Otdel. Mat. Inst. Steklov.
  (LOMI)}, 66:133--147, 207--208, 1976.
\newblock Studies in topology, II.

\bibitem[Wat07]{Watson}
Liam Watson.
\newblock Knots with identical {K}hovanov homology.
\newblock {\em Algebr. Geom. Topol.}, 7:1389--1407, 2007.

\bibitem[Weh]{Wehrli-KH-examples}
Stephan~M. Wehrli.
\newblock Khovanov homology and {C}onway mutation.

\bibitem[Weh10]{Wehrli-Kh-mutation}
Stephan~M. Wehrli.
\newblock Mutation invariance of {K}hovanov homology over {$\Bbb F_2$}.
\newblock {\em Quantum Topol.}, 1(2):111--128, 2010.

\bibitem[Won]{Wong-skein}
C.-M.~Mike Wong.
\newblock Grid diagrams and {M}anolescu's unoriented skein exact triangle for
  knot {F}loer homology.

\bibitem[{Zem}16]{Zemke-basepoints}
I.~{Zemke}.
\newblock {Quasi-stabilization and basepoint moving maps in link Floer
  homology}.
\newblock {\em ArXiv e-prints}, April 2016.

\bibitem[{Zib}16]{Zibrowius}
C.~{Zibrowius}.
\newblock {On a Heegaard Floer theory for tangles}.
\newblock {\em ArXiv e-prints}, October 2016.

\end{thebibliography}


\end{document}